\newtheorem{theorem}{Theorem}[section]
\newtheorem{lemma}[theorem]{Lemma}
\newtheorem{example}[theorem]{Example}
\newtheorem{remark}[theorem]{Remark}
\newcommand{\eop}{}
\newcommand{\Res}{\mathop{\mathrm{Res}}}
\renewcommand{\Re}{\mathop{\mathrm{Re}}}
\renewcommand{\Im}{\mathop{\mathrm{Im}}}
\renewcommand{\i}{\mathrm{i}}
\newcommand{\CC}{{\mathbb C}}
\newcommand{\LL}{{\mathcal L}}
\newcommand{\KK}{{\mathcal K}}
\newcommand{\bI}{{\bf I}}
\newcommand{\bM}{{\bf M}}
\newcommand{\bN}{{\bf N}}
\newcommand{\bz}{{\bf z}}
\newcommand{\bv}{{\bf v}}
\newcommand{\bx}{{\bf x}}
\newcommand{\by}{{\bf y}}
\newcommand{\bvb}{{\bf b}}
\newcommand{\bc}{{\bf c}}
\newcommand{\bt}{{\bf t}}
\newcommand{\C}{\mathbb{C}}
\newcommand{\cK}{\mathcal{K}}
\newcommand{\cL}{\mathcal{L}}
\newcommand{\abs}[1]{\vert #1 \vert}
\DeclareMathOperator{\re}{Re}
\DeclareMathOperator{\im}{Im}
\newcommand{\coloneq}{\mathrel{\mathop:}=}
\begin{document}

\title{Numerical computation of the conformal map onto lemniscatic domains}
\author{Mohamed M. S. Nasser\footnotemark[2] \and J\"org Liesen\footnotemark[3] 
\and Olivier S\`{e}te\footnotemark[3]}

\date{\today}
\maketitle

\renewcommand{\thefootnote}{\fnsymbol{footnote}}

\footnotetext[2]{
Department of Mathematics, Statistics and Physics,
College of Arts and Sciences,
Qatar University,\\
P.O. Box: 2713, Doha, Qatar \\
\texttt{mms.nasser@qu.edu.qa}
}
\footnotetext[3]{Institute of Mathematics, Technische Universit\"{a}t Berlin, 
MA 4-5\\
Stra{\ss}e des 17. Juni 136\\
10623 Berlin, Germany\\
\texttt{\{liesen,sete\}@math.tu-berlin.de}
}

\renewcommand{\thefootnote}{\arabic{footnote}}

\begin{abstract}
We present a numerical method for the computation of the conformal
map from unbounded multiply-connected domains onto lemniscatic domains.
For $\ell$-times connected domains the method requires solving $\ell$ boundary 
integral equations with the Neumann kernel. This can be done in $O(\ell^2 n 
\log n)$ operations, where $n$ is the number of nodes in the discretization of 
each boundary component of the multiply connected domain. As demonstrated by 
numerical examples, the method works for domains with close-to-touching 
boundaries, non-convex boundaries, piecewise smooth boundaries, and for 
domains of high connectivity.
\end{abstract}

\textbf{Keywords} numerical conformal mapping; multiply connected domains;
lemniscatic domains; boundary integral equations; Neumann kernel.

\textbf{Mathematics Subject Classification (2010)}
30C30; %% Numerical methods in conformal mapping theory
45B05; %% Fredholm integral equations 
65E05 %% Numerical methods in complex analysis (potential theory, etc.) [For 
% numerical methods in conformal mapping, see also 30C30] 

\section{Introduction}

In the theory of conformal mapping for multiply connected domains (open and 
connected sets) in the
extended complex plane $\widehat{\C} = \C \cup \{ \infty \}$, there are several
canonical domains onto which a given domain may be mapped.  The most commonly
considered canonical domains are slit domains (see, e.g., Chapter~VIII in the 
book of Nehari~\cite{Neh1952}),
circular domains, and domains with polygonal boundary.
Conformal maps onto these domains have been intensively studied, and several
numerical methods for the computation of these maps have been proposed.
Slit domains are considered, e.g., 
in~\cite{And-Mcn2012,Cro-Mar2006,Del-DEP2008,Nas-cmft09,Nas-siam09,Nas-jmaa11
,Nas-jmaa13}, circular domains in~\cite{Del2006,Luo2010,Nas-cmft15}, and
Schwarz--Christoffel maps for polygonal domains
in~\cite{Cro2005,Cro2007,Del2006,Del-DEP2006,Del-EKP2013,Del-EP2004,Del-Kro2011}
.

In this article we consider the numerical computation of the conformal map
onto \emph{lemniscatic domains}, which are another type of canonical domain.
A lemniscatic domain is a domain of the form
\begin{equation}
\cL \coloneq \Big\{ w \in \widehat{\C} :
\prod_{j=1}^\ell \abs{w-a_j}^{m_j} > \tau \Big\}, \label{eqn:lem_dom}
\end{equation}
where $a_1,\ldots,a_\ell \in \C$ are pairwise distinct,
$m_1, \ldots, m_\ell, \tau > 0$ are real numbers, and the exponents satisfy
\begin{equation}\label{eq:mj}
\sum_{j=1}^\ell m_j = 1.
\end{equation}
Lemniscatic domains where introduced by Walsh~\cite{Wal56}, who proved that if
$\cK$ is an $\ell$-times connected domain with $\infty \in \cK$, there exists a
lemniscatic domain $\cL$ as in~\eqref{eqn:lem_dom} and a conformal map $\Phi :
\cK \to \cL$ normalized by $\Phi(\infty) = \infty$ and $\Phi'(\infty) = 1$; see
Theorem~\ref{thm:existence_lem_dom} below for the precise statement.
The conformal map onto a lemniscatic domain is a direct generalization of the
Riemann map for simply connected domains, for if $\ell = 1$
in~\eqref{eqn:lem_dom}, then $\cL$ is is the exterior of a disk.

In addition to Walsh~\cite{Wal56}, the existence of the conformal map onto a
lemniscatic domain was shown by Grunsky~\cite{Grunsky1957,Grunsky1957a},
Jenkins~\cite{Jenkins1958} and Landau~\cite{Lan1961}. The last paper also
contains an iteration method for computing $\Phi$, which, however, requires
knowledge of the harmonic measure of parts of the boundary of the original
domain $\cK$. Recently, two of the present authors have investigated properties
of this map and constructed some explicit examples in~\cite{Set-Lie15}.

A remarkable feature of Walsh's conformal map is that it allows a direct 
generalization of the classical Faber polynomials, which are defined for simply 
connected compact sets, to compact sets with several components. The resulting
\emph{Faber--Walsh polynomials}, introduced by Walsh in~\cite{Wal58},
are likely to prove useful, given the vast number of both theoretical
and practical applications of the classical Faber polynomials. 
For further details on Faber--Walsh polynomials we refer to the recent 
paper~\cite{Set-Lie15_fwprop}.

Since the construction of conformal maps onto lemniscatic domains is in general
nontrivial and only a few explicit examples are known, it is desirable
to have a method for \textit{numerically computing} such maps.
In this paper we derive such a method and study it numerically. More precisely,
given an $\ell$-times connected domain $\cK$ with a sufficiently smooth boundary
our method computes the parameters defining the corresponding lemniscatic
domain $\cL$ as well as the boundary values of the conformal map
$\Phi : \cK \to \cL$. The method can be considered an extension of the
approach described
in~\cite{Nas-cmft09,Nas-siam09,Nas-jmaa11,Nas-jmaa13}
for the computation, in a unified way, of conformal maps onto all $39$ slit
domains identified by Koebe in~\cite{Koe1916}. The method described
in~\cite{Nas-cmft09,Nas-siam09,Nas-jmaa11,Nas-jmaa13} requires solving
a boundary integral equation with the generalized Neumann kernel.
Using the Fast Multipole Method (FMM), the integral equation for multiply
connected domains of connectivity $\ell$ can be solved numerically in
$O(\ell n \log n)$ operations where $n$ is the number of nodes in the
discretization of each boundary component~\cite{Nas-fast,Nas-siam13}.
The method presented in this article requires solving $\ell$ boundary integral
equations followed by solving a system of $\ell n+\ell$ non-linear equations.
The values of the conformal map $\Phi$ for interior points can then be 
calculated
using Cauchy's integral formula.

In recent years, several numerical methods have been proposed for
computing the conformal map of multiply connected domains onto different
types of canonical domains; 
see~\cite{And-Mcn2012,Cro2005,Cro2007,Cro-Mar2006,Del2006,Del-DEP2006,
Del-DEP2008,Del-EKP2013,Del-EP2004,Del-Kro2011,Luo2010,Nas-cmft09,Nas-siam09,
Nas-jmaa11,Nas-jmaa13,Nas-cmft15,Nas-siam13,Weg05,Yun-proc14} and the references 
cited therein.
However, most of these numerical methods are limited to certain types of
canonical domains or original domains. In comparison, the approach using
the boundary integral equation with the generalized Neumann kernel can
be used for a wide range of canonical domains. Moreover, it has been
successfully applied to domains of very high connectivity, with piecewise
smooth boundaries, with close-to-touching boundaries, and with complex
geometry; see~\cite{Nas-siam13,Nas-cmft15,Nas-fast,Nas-Sak}.

This paper is organized as follows. In Section~\ref{sect:dom} we state Walsh's
existence theorem for the conformal map onto lemniscatic domains. 
We then give the definition of the Neumann kernel in
Section~\ref{sect:neumann_kernel}.  In Section~\ref{sect:boundary_vals} we
derive equations for the boundary values of the conformal
map $\Phi$ and for the parameters of the lemniscatic domain $\cL$.
In Section~\ref{sect:numerics} we use these equations for the derivation of a 
numerical method for computing $\Phi$ and $\cL$.
Numerical examples with five different domains are presented in 
Section~\ref{sect:examples}. 
Concluding remarks are given in Section~\ref{sect:concl}.

%------------------------------------------------------------------------
\section{The conformal map onto lemniscatic domains}
\label{sect:dom}

The following result is Walsh's existence theorem from~\cite{Wal56} (see
also~\cite[Theorem~4]{Lan1961}), which shows that lemniscatic domains are
canonical domains for certain $\ell$-times connected domains.

\begin{theorem} \label{thm:existence_lem_dom}
Let $\cK$ be an unbounded domain in $\widehat{\C}$ with $\infty \in \cK$, and
let $\Gamma = \partial \cK$ consist of $\ell$ closed Jordan curves $\Gamma_1,
\ldots, \Gamma_\ell$.  Then there exist a uniquely determined lemniscatic domain
$\cL$ of the form~\eqref{eqn:lem_dom} and a uniquely determined bijective and
conformal map
\begin{equation}\label{eq:norm}
\Phi : \cK \to \cL \quad \text{ with } \quad
\Phi(z)=z+O\left(\frac{1}{z}\right) \; \text{ near infinity.}
\end{equation}
Further $\Phi$ extends to a continuous bijective map from $\overline{\cK} = \cK
\cup \Gamma$ to $\overline{\cL}$, and for each $j = 1, \ldots, \ell$, the image
of $\Gamma_j$ under $\Phi$ contains the point $a_j$ in its interior.

The number $\tau > 0$ is the transfinite diameter (i.e., the logarithmic
capacity) of the compact set $\widehat{\C} \backslash \cK$.
\end{theorem}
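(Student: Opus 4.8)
The plan is to extract $\tau$ and the exponents $m_j$ from potential-theoretic invariants of $\cK$, to reduce the existence of $\Phi$ to finding centres $a_j$ that make a lemniscatic domain conformally equivalent to $\cK$ (which I would settle by a continuity argument), and to read off the boundary statements from standard facts once $\Phi$ is in hand. Existence here is, in any case, the classical theorem of Walsh~\cite{Wal56} (see also~\cite{Grunsky1957,Grunsky1957a,Jenkins1958,Lan1961}).

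\emph{The parameters.} First I would fix $g \coloneq g_{\cK}(\cdot,\infty)$, the Green's function of $\cK$ with pole at infinity. Since each $\Gamma_j$ is a Jordan curve, every boundary point is regular, so $g$ extends continuously to $\overline{\cK}$ with $g\equiv 0$ on $\Gamma$, and $g(z)=\log\abs{z}-\log\widehat{\tau}+o(1)$ near infinity, $\widehat{\tau}>0$ being by definition the logarithmic capacity (transfinite diameter) of $\widehat{\C}\setminus\cK$. Let $\omega_j$ be the harmonic measure of $\Gamma_j$ with respect to $\cK$ and set $\widehat{m}_j\coloneq\omega_j(\infty)$; then $\widehat{m}_j>0$ by the strong maximum principle (each $\Gamma_j$ is a nondegenerate continuum) and $\sum_{j}\widehat{m}_j=1$ because $\sum_{j}\omega_j\equiv 1$. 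For a lemniscatic domain~\eqref{eqn:lem_dom} the function $G_{\cL}(w)\coloneq\sum_{j=1}^{\ell}m_j\log\abs{w-a_j}-\log\tau$ is exactly its Green's function with pole at infinity, it equals $\log\abs{w}-\log\tau+O(1/\abs{w})$ there by~\eqref{eq:mj}, and the harmonic measure of the boundary component of $\cL$ enclosing $a_j$ has value $m_j$ at infinity. Since Green's function and harmonic measure are conformal invariants and $\Phi(z)=z+O(1/z)$ at infinity, any admissible $\Phi$ satisfies $G_{\cL}\circ\Phi=g$ and $\widetilde{\omega}_j\circ\Phi=\omega_j$, where $\widetilde{\omega}_j$ is the harmonic measure of the component of $\partial\cL$ enclosing $a_j$; evaluating at infinity forces $\tau=\widehat{\tau}$ and $m_j=\widehat{m}_j$. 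This proves the last assertion of the theorem and~\eqref{eq:mj}, and shows $\cL$ is determined by $\cK$ up to the location of the $a_j$. For uniqueness of $\Phi$ (hence of the $a_j$): if $\Phi_1,\Phi_2$ are both admissible, then $\Phi_2\circ\Phi_1^{-1}$ is a conformal automorphism of a lemniscatic domain fixing $\infty$ with derivative $1$; such a map permutes the centres, and comparing Laurent expansions at infinity (or applying Schwarz reflection iteratively across the bounding lemniscates) shows it is the identity.

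\emph{Existence and boundary behaviour.} It remains to produce centres $a_1,\dots,a_\ell$, one inside each $\Gamma_j$, such that the resulting $\cL$ admits a conformal map $\Phi:\cK\to\cL$ with $\Phi(z)=z+O(1/z)$ at infinity; equivalently, to find $\cL$ of the prescribed type (with the already-determined $\tau,m_j$) conformally equivalent to $\cK$ under the normalisation at infinity. This matching of conformal moduli is the genuinely nonlinear core and the main obstacle; it is Walsh's theorem, which I would either invoke or prove by the method of continuity: deform $\cK$ through a smooth family $\cK_t$, $t\in[0,1]$, with $\cK_1=\cK$ and $\cK_0$ the exterior of $\ell$ small, well-separated disks, for which $\Phi$ and the $a_j$ are essentially explicit; then show that the set of $t$ for which an admissible $\Phi$ exists is nonempty, open and closed. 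Openness follows from the implicit function theorem, the linearised map being an isomorphism because the relevant period matrix of the harmonic measures is nonsingular; closedness follows from a normal-families compactness argument, using that along the deformation $\tau$ and the $m_j$ depend continuously on $t$, stay in compact subsets of $(0,\infty)$ subject to $\sum_j m_j=1$ with the $m_j$ bounded away from $0$, so the domains $\cL_t$ do not degenerate and the $a_j(t)$ remain bounded and mutually separated. Once $\Phi$ exists it extends to a homeomorphism $\overline{\cK}\to\overline{\cL}$, since both boundaries consist of Jordan curves (a Carath\'eodory-type boundary-correspondence argument applied at each component); and the argument principle, applied to $\Phi-a_j$ along $\Gamma_p$, shows $\Phi(\Gamma_p)$ winds around $a_j$ exactly when $p=j$, so in particular each $a_j$ lies in the interior of $\Phi(\Gamma_j)$.
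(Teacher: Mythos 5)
You should first note that the paper itself does not prove this statement: it is Walsh's classical existence theorem, quoted with references to~\cite{Wal56} and~\cite[Theorem~4]{Lan1961}, and everything later in the paper simply uses it. So there is no in-paper argument to measure you against, and your fallback position --- ``invoke Walsh'' --- is exactly what the authors do. Your identification of the parameters is also sound in outline and matches the standard picture: $G_{\cL}(w)=\sum_{j}m_j\log\abs{w-a_j}-\log\tau$ is the Green's function of $\cL$ with pole at infinity (this is the same fact the paper uses, via~\cite{Wal58}, in the proof of Lemma~\ref{lem:Upne0}), and conformal invariance together with the normalization~\eqref{eq:norm} forces $\tau$ to be the capacity and the $m_j$ to be harmonic measures at infinity, giving~\eqref{eq:mj} and the last assertion of the theorem. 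One small omission there: the claim that the harmonic measure of the component of $\partial\cL$ enclosing $a_j$ has value $m_j$ at infinity is asserted, not proved; it follows from the period identity $\widetilde{\omega}_j(\infty)=\frac{1}{2\pi}\oint_{\partial\cL_j}\partial_n G_{\cL}\,ds$, which picks out the single logarithmic pole $a_j$ inside that component, and you should say so.

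If, however, the proposal is read as a self-contained proof, it has genuine gaps precisely at the points you flag as the core. (a) Uniqueness: $\Phi_2\circ\Phi_1^{-1}$ is a conformal map between two \emph{a priori different} lemniscatic domains (same $m_j,\tau$, possibly different centres), not an automorphism of one; ``comparing Laurent expansions at infinity shows it is the identity'' is a claim, not an argument, since the coefficients of such a map are not visibly constrained beyond the first two. The workable routes are the Green's function identity $\sum_j m_j\log\abs{\psi(w)-b_j}=\sum_j m_j\log\abs{w-a_j}$ followed by an analysis of the associated (multivalued) analytic function, or Walsh's original argument; reflection across the lemniscate is only locally available (the boundary is analytic away from critical points of $U$, which Lemma~\ref{lem:Upne0} excludes from $\partial\cL$), and iterating it to a global conclusion needs real care. (b) Existence by continuity: the proposed base case is wrong as stated --- for the exterior of $\ell$ well-separated disks neither $\Phi$ nor the $a_j$ are ``essentially explicit''; even for two disks the map is obtained only with some effort in~\cite[Section~4]{Set-Lie15}. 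The natural base point is a lemniscatic domain itself, where $\Phi=\mathrm{id}$. Moreover the openness step (nonsingularity of the linearized problem / the period matrix) and the closedness step (non-degeneration of $\cL_t$, the $a_j(t)$ staying separated and inside the correct components) are each substantial lemmas that are only named. So the proposal stands only in its ``cite Walsh'' reading, which is what the paper does; as an independent proof it is an outline with the hard steps unproved.
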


The uniqueness of the lemniscatic domain and the conformal map in 
Theorem~\ref{thm:existence_lem_dom} is forced by the normalization condition of 
$\Phi$ near infinity expressed in~\eqref{eq:norm}.
Note that if $c\in\C$ is any constant, then $\Phi_c=\Phi+c$ maps
$\cK$ bijectively and conformally onto the translated lemniscatic
domain $\cL+c$, and $\Phi_c$ satisfies the normalization conditions
$\Phi_c(\infty) = \infty$ and $\Phi_c'(\infty) = 1$.

Theorem~\ref{thm:existence_lem_dom} is illustrated in Figure~\ref{fig:3bw} for
a domain $\cK$ bounded by three Jordan curves; see Example~\ref{ex:3bw}
in Section~\ref{sect:examples} for details.

\begin{figure}
\centerline{
\includegraphics[width=0.5\textwidth]{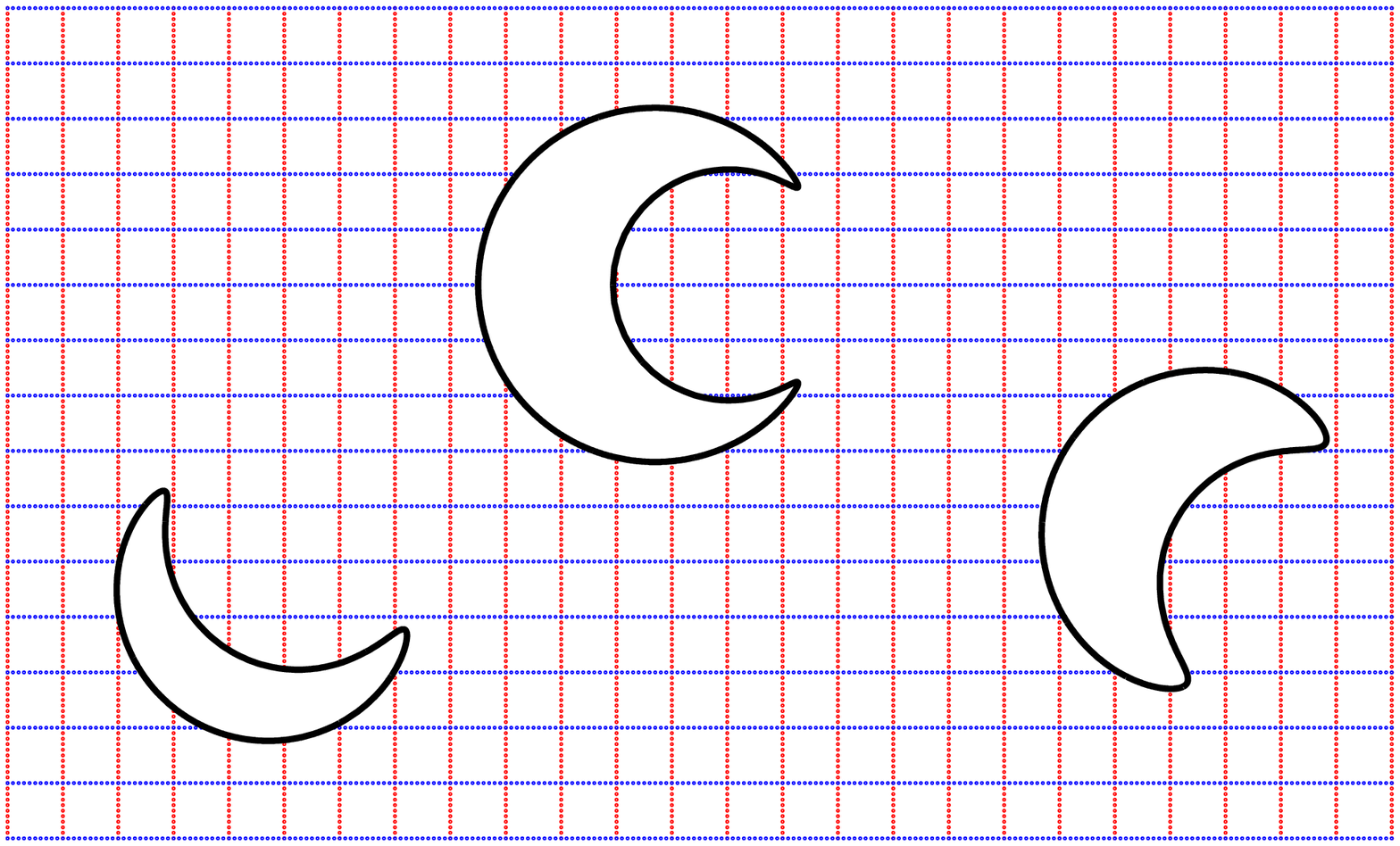}
\includegraphics[width=0.5\textwidth]{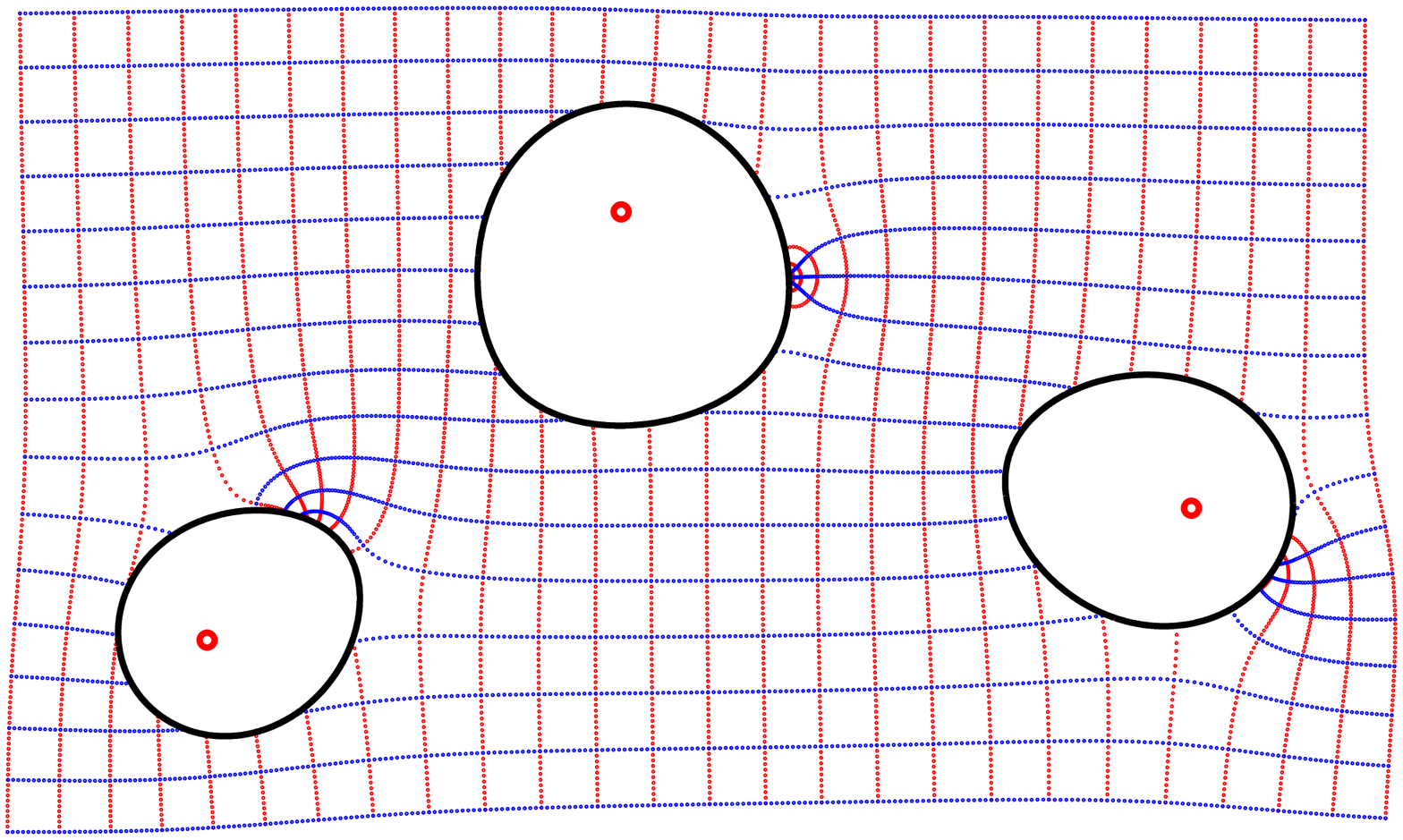}
}
\caption{Illustration of Theorem~\ref{thm:existence_lem_dom}.  Original domain
(left) and corresponding lemniscatic domain (right).  The red dots are the
centers $a_1, a_2, a_3$.}
\label{fig:3bw}
\end{figure}

We will need the following lemma.

\begin{lemma}\label{lem:Upne0}
In the notation of Theorem~\ref{thm:existence_lem_dom}, let
\begin{equation*}%\label{eq:U}
U(w) = \prod_{j=1}^\ell (w-a_j)^{m_j},
\end{equation*}
so that $\cL = \{ w \in \widehat{\C} : \abs{U(w)} > \tau \}$.  Then $U'(w)
\neq 0$ for $w \in \partial \cL = \{ w \in \widehat{\C} : \abs{U(w)} =
\tau \}$.
\end{lemma}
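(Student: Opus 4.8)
The plan is to replace the statement about $U'$ by one about a harmonic function and then to use the fact, guaranteed by Theorem~\ref{thm:existence_lem_dom}, that the bounded complementary components of $\cL$ are closed Jordan domains. Fix $w_0\in\partial\cL$. Since each bounded complementary component $E_j$ of $\cL$ is the closed Jordan domain bounded by $\Phi(\Gamma_j)$ and contains $a_j$ in its interior (Theorem~\ref{thm:existence_lem_dom}), and these components have pairwise disjoint closures, the point $w_0$ lies on exactly one curve $\Phi(\Gamma_j)=\partial E_j$ and is different from every $a_k$. Hence $U$ has a holomorphic, non-vanishing branch near $w_0$, so $g:=\log\abs{U}-\log\tau$ is harmonic in a neighbourhood of $w_0$ with $g(w_0)=0$. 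Because $\tfrac{U'(w)}{U(w)}=\sum_{j=1}^\ell\tfrac{m_j}{w-a_j}$ and $U(w_0)\neq0$, we have $U'(w_0)=0$ if and only if $\nabla g(w_0)=0$, so it suffices to show that $g$ has no critical point on $\partial\cL$.

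Suppose, for contradiction, that $\nabla g(w_0)=0$. Write $g=\re F$ for a holomorphic $F$ near $w_0$ with $F(w_0)=0$; then $F$ has a zero of some order $m\geq2$ at $w_0$, so in a holomorphic local coordinate $\zeta$ centred at $w_0$ one has $g=\re(\zeta^m)=\abs{\zeta}^m\cos(m\arg\zeta)$. Consequently, near $w_0$ the set $\{g<0\}$ is a union of $m\geq2$ pairwise disjoint open sectors $S_1,\dots,S_m$, each having $w_0$ in its closure, while $\{g\le0\}$ is the union of the corresponding closed sectors, any two of which meet only at $w_0$. Now $\{g<0\}=\{\,\abs{U}<\tau\,\}\subseteq\widehat{\C}\setminus\cL$, and near $w_0$ the set $\widehat{\C}\setminus\cL$ coincides with $E_j$ (the $E_k$ have pairwise disjoint closures); hence each $S_i$ is contained in $E_j$, while $E_j\subseteq\{\,\abs{U}\le\tau\,\}=\{g\le0\}$. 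Therefore, for every sufficiently small neighbourhood $N$ of $w_0$ the set $(E_j\cap N)\setminus\{w_0\}$ is the disjoint union of the $m\geq2$ nonempty sets $(E_j\cap N\cap\overline{S_i})\setminus\{w_0\}$, and so is disconnected. On the other hand, $E_j$ is homeomorphic to a closed disk by the Schoenflies theorem, and in a closed disk every point has arbitrarily small neighbourhoods whose intersection with the disk stays connected after that point is deleted; the same must then hold for $E_j$ and $w_0$, a contradiction. Hence $\nabla g(w_0)\neq0$, i.e.\ $U'(w_0)\neq0$.

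The step I expect to be the crux is the local topological argument in the second paragraph, and in particular the appeal to the Schoenflies theorem: everything rests on the complementary components of $\cL$ being \emph{genuine} closed Jordan domains, which is exactly what Walsh's theorem supplies through the continuous extension of $\Phi$ to a homeomorphism $\overline{\cK}\to\overline{\cL}$. I would deliberately avoid a tempting shortcut: $g$ is the Green's function of $\cL$ with pole at infinity (it is harmonic and positive in $\cL$, vanishes on $\partial\cL$, and $g(w)-\log\abs{w}\to-\log\tau$ as $w\to\infty$ since $\sum_j m_j=1$), and the critical points of $U$ are precisely its critical points; one could try to deduce $\nabla g\neq0$ on $\partial\cL$ from Hopf's lemma, but that needs boundary regularity not assumed in Theorem~\ref{thm:existence_lem_dom}. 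As an orientation aid one can also record that, writing $\tfrac{U'}{U}=\tfrac{P(w)}{\prod_{j=1}^\ell(w-a_j)}$ with $P(w)=\sum_{j=1}^\ell m_j\prod_{k\neq j}(w-a_k)$ of degree $\ell-1$ and $P(a_i)=m_i\prod_{k\neq i}(a_i-a_k)\neq0$, the map $U$ has exactly $\ell-1$ critical points in $\C$, none equal to any $a_j$; the lemma asserts that none of them lies on the level set $\{\,\abs{U}=\tau\,\}$.
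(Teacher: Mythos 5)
Your proof is correct, but it takes a genuinely different route from the paper. The paper also reduces the claim to the absence of critical points of $\log\abs{U}$ on $\partial\cL$, but then argues globally: $\log\abs{U}-\log\rho$ is the Green's function with pole at infinity of the larger domain $\{\abs{U}>\rho\}$, $0<\rho<\tau$, and by the cited properties of its level curves (for $\sigma$ slightly above $1$ there are $\ell$ contours, the level sets grow with $\sigma$, and components merge exactly when a critical point is crossed), the fact that $\partial\cL$ still has $\ell$ components forces all critical points to lie exterior to $\partial\cL$ --- which in passing gives slightly more than the lemma, namely that no critical point lies interior to $\partial\cL$ either. You instead argue locally at a hypothetical boundary critical point $w_0$: the normal form $g=\re(\zeta^m)$, $m\ge 2$, makes $\{g\le 0\}$ a bouquet of $m$ closed sectors at $w_0$, which is incompatible with the complementary component through $w_0$ being a closed Jordan domain (Schoenflies). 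This buys an essentially self-contained, elementary argument that avoids the level-curve machinery quoted from Walsh's monographs, at the price of some plane-topology bookkeeping: note that the identification of the bounded complementary components of $\cL$ with the closed Jordan interiors of the disjoint curves $\Phi(\Gamma_j)$ is not literally part of Theorem~\ref{thm:existence_lem_dom} but a standard consequence of it ($\cL$ is connected, contains $\infty$, and misses each curve, so it lies in each exterior; the complement is then the union of the pairwise disjoint closed interiors), and in the final contradiction the small ``good'' neighbourhoods transferred from the disk are relative neighbourhoods $V\cap E_j$ with $V$ ambient open but not necessarily small --- the clash with the sector picture still holds because $V$ contains a small ball around $w_0$ and hence $V\cap E_j\setminus\{w_0\}$ meets all $m$ sectors while being contained in their disjoint union, but this one-line observation should be made explicit. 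Your closing remarks (why Hopf's lemma is not available, and the count of $\ell-1$ critical points of $U$) are accurate; also, $w_0\neq a_k$ follows even more directly from $\abs{U(a_k)}=0\neq\tau$.
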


\begin{proof}
We show that the zeros of $U'$ are the critical points of a Green's function
with pole at infinity, and that these are not located on $\partial \cL$.

The function $U$ is analytic but multi-valued in $\C \backslash \{ a_1, \ldots,
a_\ell \}$.  Let $0 < \rho < \tau$ and consider the auxiliary function
\begin{equation*}
F(w) = \log U(w) - \log(\rho),
\end{equation*}
which is analytic but not single-valued in $\C \backslash \{ a_1, \ldots,
a_\ell \}$.  Then the harmonic functions
\begin{equation*}
G(w) = \re(F(w)) =  \log \abs{U(w)} - \log(\rho) \quad \text{ and } \quad H(w)
= \im(F(w))
\end{equation*}
are conjugates, i.e., $G_x = H_y$ and $G_y = - H_x$.  The derivative of $F$ is
given by
\begin{equation*}
\frac{U'}{U} = F' = \frac{1}{2} (F_x - i F_y)
= \frac{1}{2} ( G_x + i H_x - i (G_y + i H_y) ) = G_x - i G_y.
\end{equation*}
Since $G$ is real-valued, $U'(w_0) = 0$ if and only if $G_x(w_0) = 0 =
G_y(w_0)$, i.e., the zeros of $U'$ are the critical points of $G$.

The function $G$ is Green's function with pole at infinity for the $\ell$-times
connected domain $\{ w \in \widehat{\C} : \abs{U(w)} > \rho \}$,
see~\cite[p.~28]{Wal58}.  Its contour lines (level curves)
\begin{equation*}
\Lambda_\sigma = \{ w \in \C : G(w) = \log(\sigma) \}
= \{ w \in \C : \abs{U(w)} = \sigma \rho \}, \quad \sigma \geq 1,
\end{equation*}
have the following properties~\cite[p.~67]{Walsh69}:
\begin{enumerate}
\item For sufficiently small $\sigma > 1$, $\Lambda_\sigma$ consists of $\ell$
contours, each surrounding exactly one of the boundary contours (of $\{ w \in
\widehat{\C} : \abs{U(w)} > \rho \}$).

\item $\Lambda_\sigma$ grows with $\sigma$, in the sense that if $\sigma <
\sigma'$, then $\Lambda_\sigma$ is contained in the interior of
$\Lambda_{\sigma'}$.

\item If $\sigma$ increases and $\Lambda_\sigma$ crosses through an $m$-fold
critical point of $G$, the number of components of $\Lambda_\sigma$ decreases
by exactly $m$.
\end{enumerate}
Now, since $\cL$ is $\ell$-times connected, $\partial \cL =
\Lambda_{\tau/\rho}$ has $\ell$ components.  Therefore, no critical points of
$G$ can lie on $\partial \cL$ (or interior to $\partial \cL$), which finishes
the proof.
\eop
\end{proof}

%------------------------------------------------------
\section{The Neumann kernel}
\label{sect:neumann_kernel}

From now on we assume that $\cK$ is a given domain as in 
Theorem~\ref{thm:existence_lem_dom} which additionally has a \emph{sufficiently 
smooth} boundary $\Gamma$, oriented such that $\cK$ is on the left of $\Gamma$. 
More precisely, we assume that each boundary
curve $\Gamma_j$ is parameterized by a $2\pi$-periodic twice continuously
differentiable complex function $\eta_j (t)$ with non-vanishing first
derivative
$\dot\eta_j(t)=d\eta_j(t)/dt\ne0$ for $t\in J_j=[0,2\pi]$, $j=1,2,\ldots,\ell$.
(A dot always denotes the derivative with respect to the parameter $t$.)
The total parameter domain $J$ is the disjoint union of the $\ell$ intervals
$J_1,\ldots,J_\ell$,
\begin{equation*}
J \coloneq \bigsqcup_{j=1}^\ell J_j=\bigcup_{j=1}^\ell \{(t,j)\;:\;t\in J_j\},
\end{equation*}
i.e., the elements of $J$ are ordered pairs $(t,j)$ where $j$ is an auxiliary
index indicating which of the intervals contains the point $t$. We define a
parametrization of the whole boundary $\Gamma$ as the complex function $\eta$
defined on $J$ by
\begin{equation}\label{e:eta-1}
\eta(t,j) \coloneq \eta_j(t), \quad t\in J_j,\quad j=1,\ldots,\ell.
\end{equation}
We shall assume for a given $t$ that the auxiliary index $j$ is known, so we
replace the pair $(t,j)$ in the left-hand side of~\eqref{e:eta-1} by $t$.
Thus, the function $\eta$ in~\eqref{e:eta-1} is written as
\begin{equation*}%\label{eq:eta}
\eta(t)= \left\{ \begin{array}{l@{\hspace{0.5cm}}l}
\eta_1(t),&t\in J_1,\\
\hspace{0.3cm}\vdots\\
\eta_\ell(t),&t\in J_\ell. \\
\end{array}
\right.
\end{equation*}

Let $H$ be the space of all real-valued H\"older continuous functions on the
boundary $\Gamma$.  In view of the smoothness of the parametrization $\eta(t)$
of the boundary $\Gamma$, any function $\phi\in H$ can be  interpreted via
$\hat\phi(t)\coloneq\phi(\eta(t))$ as a $2\pi$-periodic H\"older continuous 
function
of the parameter $t$ on $J$, and vice versa. Henceforth, in this paper, we
shall not distinguish between $\phi(t)$ and $\phi(\eta(t))$.

We define the Neumann kernel $N(s,t)$ for $(s,t)\in J\times J$ by
\begin{equation*}%\label{eq:N}
N(s,t) \coloneq 
\frac{1}{\pi}\Im\left(\frac{\dot\eta(t)}{\eta(t)-\eta(s)}\right).
\end{equation*}
It is a particular case of the generalized Neumann kernel considered 
in~\cite{Weg-Nas}. 
Similarly, the kernel $M(s,t)$ defined for $(s,t)\in J\times J$ by
\begin{equation*}%\label{eq:M}
M(s,t) \coloneq 
\frac{1}{\pi}\Re\left(\frac{\dot\eta(t)}{\eta(t)-\eta(s)}\right),
\end{equation*}
is a particular case of the kernel $M$ considered in~\cite{Weg-Nas}.
The above kernels have been used in~\cite{Nas-siam09,Nas-jmaa11}
for computing the conformal map from unbounded multiply domains onto the
canonical slit domains; see also~\cite{Nas-siam13,Nas-fast,Nas-amc11}.
The Neumann kernel also appears frequently in the integral equations for
potential theory; see, e.g.,~\cite{Atk97,Gre93,Hen3,Kre14,Weg05}.

\begin{lemma}[see~\cite{Weg-Nas}]
(a) The kernel $N$ is continuous with
\begin{equation*}%\label{e:N-tt}
N(t,t)= \frac{1}{2\pi} \Im\frac{\ddot\eta(t)}{\dot\eta(t)}.
\end{equation*}
(b) When $s,t\in J_j$  are in the same parameter interval $J_j$, then
\begin{equation*}%\label{e:M-M1}
M(s,t)= -\frac{1}{2\pi} \cot \frac{s-t}{2} + M_1(s,t)
\end{equation*}
with a continuous kernel $M_1$ which takes on the diagonal the values
\begin{equation*}%\label{e:M1-tt}
M_1(t,t)= \frac{1}{2\pi} \Re\frac{\ddot\eta(t)}{\dot\eta(t)}.
\end{equation*}
\end{lemma}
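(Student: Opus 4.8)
The plan is to reduce both parts to a single auxiliary computation --- a uniform second-order Taylor expansion of $\eta$ that isolates the $1/(t-s)$ singularity of $\dot\eta(t)/(\eta(t)-\eta(s))$ --- following the standard argument of~\cite{Weg-Nas}. First I would dispose of the off-diagonal points: if $s$ and $t$ lie in different intervals $J_j, J_k$, or in the same interval with $s\not\equiv t\pmod{2\pi}$, then $\eta(t)\neq\eta(s)$ because each $\Gamma_j$ is a Jordan curve parametrized injectively, so on such $(s,t)$ both $N$ and $M$ are continuous, being quotients of continuous functions with non-vanishing denominator. Everything therefore reduces to the behaviour as $(s,t)\to(t_0,t_0)$ with $s,t$ in one common interval $J_j$.

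For the diagonal I would introduce $\psi(s,t)\coloneq\tfrac{(t-s)\dot\eta(t)}{\eta(t)-\eta(s)}$ for $s\not\equiv t\pmod{2\pi}$, with $\psi(t,t)\coloneq1$, and prove that $\bigl(\psi(s,t)-1\bigr)/(t-s)$ extends to a continuous function on $J_j\times J_j$ with value $\tfrac12\,\ddot\eta(t)/\dot\eta(t)$ on the diagonal. Concretely, from $\eta(s)-\eta(t)=(s-t)\dot\eta(t)+\int_t^s\!\int_t^\sigma\ddot\eta(\rho)\,d\rho\,d\sigma$ one splits off the term $\tfrac12(s-t)^2\ddot\eta(t)$, the remainder being $o(\abs{s-t}^2)$ uniformly in $t$ by uniform continuity of $\ddot\eta$; dividing by $t-s$ and inverting (using $\dot\eta(t)\neq0$) gives $\psi(s,t)=1+\tfrac{t-s}{2}\,\tfrac{\ddot\eta(t)}{\dot\eta(t)}+R(s,t)$ with $R(s,t)/(t-s)\to0$ uniformly as $s\to t$. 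Establishing this expansion and its error bound \emph{uniformly} in $t$ near $t_0$ is the technical heart of the argument and the one place where the hypothesis $\eta\in C^2$ is genuinely needed.

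Granting this, part~(a) is immediate: since $1/(t-s)$ is real, $N(s,t)=\tfrac1\pi\Im\bigl(\psi(s,t)/(t-s)\bigr)=\tfrac1\pi\Im\bigl((\psi(s,t)-1)/(t-s)\bigr)$, which is continuous with diagonal value $\tfrac1{2\pi}\Im\bigl(\ddot\eta(t)/\dot\eta(t)\bigr)$. For part~(b) I would write $M(s,t)=\tfrac1\pi\Re\bigl((\psi(s,t)-1)/(t-s)\bigr)+\tfrac1{\pi(t-s)}$ and absorb the elementary identity $\tfrac1{\pi(t-s)}=-\tfrac1{2\pi}\cot\tfrac{s-t}{2}+\tfrac1\pi g(s-t)$, where $g(u)\coloneq\tfrac12\cot\tfrac u2-\tfrac1u$ is real-analytic on $(-2\pi,2\pi)$ with $g(0)=0$ (a one-line power-series check; alternatively one verifies the whole split against the model curve $\eta(t)=e^{\i t}$, for which $\dot\eta(t)/(\eta(t)-\eta(s))=\tfrac12\cot\tfrac{t-s}{2}+\tfrac{\i}{2}$ exactly, so $N\equiv\tfrac1{2\pi}$ and $M_1\equiv0$ there). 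Then $M_1(s,t)\coloneq\tfrac1\pi\Re\bigl((\psi(s,t)-1)/(t-s)\bigr)+\tfrac1\pi g(s-t)$ is continuous and satisfies $M_1(t,t)=\tfrac1{2\pi}\Re\bigl(\ddot\eta(t)/\dot\eta(t)\bigr)$, as claimed.
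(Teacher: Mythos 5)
This lemma is stated in the paper without proof (it is imported from~\cite{Weg-Nas}), so there is no in-paper argument to compare against; judged on its own, your proof is in substance the standard singularity-subtraction argument behind that reference, and the computations check out: the uniform second-order Taylor expansion controlled by the modulus of continuity of $\ddot\eta$ (with $\dot\eta$ bounded away from zero), the resulting expansion $\psi(s,t)=1+\tfrac{t-s}{2}\,\ddot\eta(t)/\dot\eta(t)+o(|t-s|)$, the splitting $\tfrac{1}{\pi(t-s)}=-\tfrac{1}{2\pi}\cot\tfrac{s-t}{2}+\tfrac1\pi g(s-t)$ with $g(u)=\tfrac12\cot\tfrac u2-\tfrac1u$ analytic and $g(0)=0$, and the circle sanity check are all correct and yield exactly the stated diagonal values. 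The only point to tighten is the pair of corner points $(0,2\pi)$ and $(2\pi,0)$ of $J_j\times J_j$: there $\eta(t)=\eta(s)$ while $t-s=\pm2\pi$, so $\psi(s,t)=(t-s)\dot\eta(t)/(\eta(t)-\eta(s))$ blows up, the claim that $(\psi(s,t)-1)/(t-s)$ extends continuously to \emph{all} of $J_j\times J_j$ is literally false there, and $g(\pm2\pi)$ is a pole, so your formula for $M_1$ is there a sum of two singular terms. This is harmless but should be said: $N$, $M$, $\cot\tfrac{s-t}{2}$, and hence $M_1$ are $2\pi$-periodic in each variable, so continuity at the identified endpoints follows from the continuity you establish near the genuine diagonal (equivalently, redo your expansion with $s$ replaced by $s\mp2\pi$ near the corners); one sentence invoking this periodic identification closes the gap.
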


We define integral operators $\bN$ and $\bM$ on the space $H$ by
\begin{align*}
\bN\mu(s) &\coloneq \int_J N(s,t) \mu(t) dt, \quad s\in J, \\
\bM\mu(s) &\coloneq \int_J  M(s,t) \mu(t) dt, \quad s\in J.
\end{align*}
The integral operator $\bN$ is a compact operator and the operator $\bM$ is a
singular operator. Both operators $\bN$ and $\bM$ are bounded on the space $H$
and map $H$ into itself~\cite{Weg-Nas}.
Finally, any piecewise constant function $\nu\in H$ defined by
\begin{equation*}%\label{e:h-1}
\nu(t)=\nu_j\quad {\rm for}\quad t\in J_j,
\end{equation*}
with real constants $\nu_j$ for $j=1,\ldots,\ell$, will be denoted by
\begin{equation*}%\label{e:h-2}
\nu(t)=(\nu_1,\ldots,\nu_\ell),\quad t\in J.
\end{equation*}

\section{The boundary values of $\Phi$ and the parameters of $\cL$}
\label{sect:boundary_vals}

In this section we derive equations for the boundary values of the conformal 
map $\Phi$ and for the parameters of the lemniscatic domain $\cL$.  These 
equations will be used for the numerical computation of $\Phi$ and $\cL$ in 
Section~\ref{sect:numerics} below.

In the notation of Theorem~\ref{thm:existence_lem_dom}, the function 
$\Phi$ maps the boundary $\Gamma$ of $\KK$ onto the boundary of the lemniscatic 
domain $\LL$, i.e., for $t\in J$,
\begin{equation*}
\prod_{j=1}^\ell \abs{ \Phi(\eta(t))-a_j }^{m_j} = \tau,
\end{equation*}
or, equivalently,
\begin{equation}\label{eq:bd-0}
\sum_{j=1}^\ell m_j \log \abs{ \Phi(\eta(t))-a_j } = \log\tau.
\end{equation}
To determine the parameters of the lemniscatic domain and the boundary values 
of $\Phi$, we fix for each $j = 1, \ldots, \ell$ an auxiliary point $\alpha_j$ 
in the interior of the curve $\Gamma_j$, and define the functions
\begin{equation}\label{eq:gam-j}
\gamma_j(t) \coloneq -\log\abs{\eta(t)-\alpha_j}, \quad t \in J, \quad j = 1, 
\ldots, \ell.
\end{equation}

The first equation for the boundary values of $\Phi$ will be obtained from the 
boundary values of the function
\begin{equation}\label{eq:F(z)}
f(z)=\sum_{j=1}^\ell m_j\log\left(\frac{\Phi(z)-a_j}{z-\alpha_j}\right),
\end{equation}
where the branch of the logarithm with $\log(1) = 0$ is chosen.  The function
$f$ is analytic in the domain $\KK$ with $f(\infty)=0$.
By~\eqref{eq:bd-0}, its boundary values are given by
\begin{equation}\label{eq:F-bd}
f(\eta(t)) = \log\tau+\gamma(t)+\i\mu(t), \quad t \in J,
\end{equation}
where
\begin{align}
\label{eq:gam}
\gamma(t) &\coloneq \sum_{j=1}^\ell m_j \gamma_j(t)
= - \sum_{j=1}^\ell m_j \log \abs{\eta(t)-\alpha_j}, \\
\mu(t) &\coloneq \Im f(\eta(t)). \nonumber
\end{align}
Although the functions $\gamma_j$ are known from~\eqref{eq:gam-j}, the constant 
$\log\tau$ and the functions $\gamma$ and $\mu$ are not known a priori.
We now show how the boundary values~\eqref{eq:F-bd} of $f$ can be computed
from the functions $\gamma_j$.  A key ingredient is the following theorem
from~\cite{Nas-siam09}.

\begin{theorem}\label{T:ie}
For each function $\gamma_j$ from~\eqref{eq:gam-j}, there exists a unique
real-valued function $\mu_j$ and a unique piecewise constant real-valued
function $h_j=(h_{1,j},\ldots,h_{\ell,j})$ such that
\begin{equation}\label{eq:F-j}
f_j(\eta(t))=\gamma_j(t)+h_j(t)+\i\mu_j(t), \quad t \in J,
\end{equation}
are boundary values of an analytic function $f_j$ in $\cK$ with
$f_j(\infty)=0$.
The function $\mu_j$ is the unique solution of the integral equation
\begin{equation}\label{eq:ie-j}
(\bI-\bN)\mu_j=-\bM\gamma_j
\end{equation}
and the function $h_j$ is given by
\begin{equation}\label{eq:h-j}
h_j=[\bM\mu_j-(\bI-\bN)\gamma_j]/2.
\end{equation}
\end{theorem}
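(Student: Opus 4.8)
The plan is to reduce the problem to the Riemann--Hilbert (RH) problem framework for multiply connected domains developed by Wegmann and Nasser, for which the integral equation with the Neumann kernel is the standard solution tool. First I would recall the general principle: given a real H\"older continuous function $\gamma_j$ on $\Gamma$, one seeks an analytic function $g_j$ in $\cK$ with $g_j(\infty)=0$ whose boundary values satisfy $\Re(g_j(\eta(t))) = \gamma_j(t) + h_j(t)$ for a piecewise constant function $h_j$. Writing $g_j(\eta(t)) = \gamma_j(t) + h_j(t) + \i\mu_j(t)$, the real and imaginary boundary values of an analytic function in an unbounded multiply connected domain are coupled through the Neumann kernel: the imaginary part $\mu_j$ is (up to the undetermined jumps $h_j$) the conjugate function of $\gamma_j$. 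The key structural fact from~\cite{Weg-Nas} is that the operator $\bI - \bN$ is invertible on $H$ precisely when one allows the piecewise constant correction $h_j$, and that the conjugation relation takes the form $(\bI - \bN)\mu_j = -\bM\gamma_j$ with $h_j$ recovered from $\gamma_j$ and $\mu_j$ by $h_j = [\bM\mu_j - (\bI - \bN)\gamma_j]/2$.

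The steps I would carry out are: (1) State the RH problem: find $\mu_j \in H$ and real constants $h_{1,j},\ldots,h_{\ell,j}$ such that $\gamma_j + h_j + \i\mu_j$ extends to an analytic function $f_j$ in $\cK$ with $f_j(\infty)=0$. (2) Invoke the existence and uniqueness theorem for this RH problem from~\cite{Nas-siam09} (equivalently~\cite{Weg-Nas}): since the boundary data $\gamma_j$ is H\"older continuous and $\Gamma$ consists of $\ell$ smooth Jordan curves, the index of the problem is such that a unique solution $(\mu_j, h_j)$ exists, and $\mu_j$ solves~\eqref{eq:ie-j}. (3) Verify that $\bI - \bN$ is injective on $H$ (so the integral equation has a unique solution): this uses the fact, established in~\cite{Weg-Nas}, that $\lambda = 1$ is a simple eigenvalue considerations do not obstruct solvability here because of the unbounded/exterior normalization $f_j(\infty)=0$ -- more precisely, for the exterior problem the relevant Fredholm alternative gives a unique solution. (4) Derive the formula~\eqref{eq:h-j} for $h_j$ by taking the Cauchy/Plemelj-type boundary relation satisfied by $f_j$: applying the operator identity relating $\bM$, $\bN$, and the identity to the boundary values $\gamma_j + h_j + \i\mu_j$, and separating real and imaginary parts, yields $h_j$ explicitly once $\mu_j$ is known.

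I would expect the main obstacle to be step~(3), namely making precise why the homogeneous equation $(\bI - \bN)\mu = 0$ has only the trivial solution in the present setting. For a bounded multiply connected domain the kernel of $\bI - \bN$ is one-dimensional (spanned by constants), but here $\cK$ is unbounded with $\infty \in \cK$ and the normalization $f_j(\infty) = 0$ kills that constant, restoring injectivity; I would cite the relevant lemma from~\cite{Weg-Nas} or~\cite{Nas-siam09} rather than reprove it. The remaining steps -- setting up the RH problem and extracting~\eqref{eq:h-j} from the Plemelj relations -- are essentially bookkeeping with the kernels $N$ and $M$, using the lemma quoted in Section~\ref{sect:neumann_kernel} for the singular behavior of $M$ on the diagonal. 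Uniqueness of $(\mu_j, h_j)$ follows from uniqueness of the solution to the RH problem together with uniqueness of the solution of the integral equation~\eqref{eq:ie-j}.
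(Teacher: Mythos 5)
Your proposal follows essentially the paper's own route: the paper gives no independent proof of Theorem~\ref{T:ie} but imports it directly from~\cite{Nas-siam09}, whose Riemann--Hilbert/generalized-Neumann-kernel framework (built on~\cite{Weg-Nas}) is exactly what you propose to invoke, including the unique solvability of $(\bI-\bN)\mu_j=-\bM\gamma_j$ and the recovery formula for $h_j$. Your step~(3) is worded a bit loosely, but the fact you need --- invertibility of $\bI-\bN$ in this unbounded setting --- is precisely the cited result (the paper later notes the eigenvalues of $\bI-\bN$ lie in $(0,2]$), so the approach matches.
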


Theorem~\ref{T:ie} allows to compute the functions $h_j$ and $\mu_j$ from the
known function $\gamma_j$, so that the boundary values of $f_j$ are known.
The following theorem shows that $f = \sum_{j=1}^\ell m_j f_j$ holds, by
relating $\mu$ and $\tau$ to the known functions $h_j$ and $\mu_j$.

\begin{theorem} % \label{T:ie-j}
Let $f$ be the function from~\eqref{eq:F(z)} and let the notation be as in
Theorem~\ref{T:ie}.  Then the function $\mu$ and the constant $\log\tau$
in~\eqref{eq:F-bd} are given by
\begin{align}
\label{eq:mu-mu-j}
\mu(t)&= \sum_{j=1}^{\ell}m_j\mu_j(t),\\
\label{eq:r-h}
\log\tau&= \sum_{j=1}^{\ell}m_jh_j(t),
\end{align}
and we have $f(z) = \sum_{j=1}^\ell m_j f_j(z)$ in $\cK$.
\end{theorem}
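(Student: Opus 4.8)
The plan is to exploit the uniqueness built into Theorem~\ref{T:ie} (and into Walsh's theorem) to identify $f$ with $\sum_{j=1}^\ell m_j f_j$, and then read off the formulas for $\mu$ and $\log\tau$ by comparing boundary values. First I would form the candidate function $g(z) \coloneq \sum_{j=1}^\ell m_j f_j(z)$, which is analytic in $\cK$ and satisfies $g(\infty) = \sum_j m_j f_j(\infty) = 0$ since each $f_j(\infty) = 0$. Its boundary values are obtained by taking the $m_j$-weighted sum of~\eqref{eq:F-j}:
\begin{equation*}
g(\eta(t)) = \sum_{j=1}^\ell m_j \gamma_j(t) + \sum_{j=1}^\ell m_j h_j(t) + \i \sum_{j=1}^\ell m_j \mu_j(t)
= \gamma(t) + \sum_{j=1}^\ell m_j h_j(t) + \i \sum_{j=1}^\ell m_j \mu_j(t),
\end{equation*}
using the definition~\eqref{eq:gam} of $\gamma$. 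Here $\sum_j m_j h_j$ is a real piecewise constant function and $\sum_j m_j \mu_j$ is a real-valued function.

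The crux is to show $f = g$. I would argue via the difference $f - g$, which is analytic in $\cK$, vanishes at infinity, and (comparing~\eqref{eq:F-bd} with the display above) has boundary values
\begin{equation*}
f(\eta(t)) - g(\eta(t)) = \Big(\log\tau - \sum_{j=1}^\ell m_j h_j(t)\Big) + \i\Big(\mu(t) - \sum_{j=1}^\ell m_j \mu_j(t)\Big).
\end{equation*}
Thus the real part of $f-g$ on $\Gamma$ equals the piecewise constant function $c(t) \coloneq \log\tau - \sum_j m_j h_j(t)$. An analytic function on the unbounded domain $\cK$ vanishing at $\infty$ whose real part is constant on each boundary component must be constant on each "channel," and since $\cK$ is connected and the function vanishes at infinity it must be identically zero; more directly, $\re(f-g)$ is harmonic in $\cK$, bounded, vanishes at infinity, and is piecewise constant on $\Gamma$, so by the uniqueness of the solution of this Dirichlet-type problem (the same uniqueness underlying Theorem~\ref{T:ie}, cf.~\cite{Nas-siam09}) together with $\re(f-g)(\infty)=0$ one concludes $\re(f-g) \equiv 0$, whence $f - g$ is purely imaginary, analytic, hence constant, and that constant is $\i\,\im(f-g)(\infty) = 0$. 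Therefore $f \equiv g$, which is the last assertion of the theorem.

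Once $f = g$ is established, comparing imaginary parts of the boundary values immediately gives $\mu(t) = \sum_{j=1}^\ell m_j \mu_j(t)$, which is~\eqref{eq:mu-mu-j}. Comparing real parts gives $\log\tau = \sum_{j=1}^\ell m_j h_j(t)$ for all $t \in J$, which is~\eqref{eq:r-h}; in particular this shows the right-hand side, a priori a piecewise constant function of $t$, is actually the same constant on every $J_j$, consistent with $\log\tau$ being a single number. I expect the main obstacle to be the justification of the uniqueness step, i.e., that a bounded harmonic function on the unbounded multiply connected domain $\cK$ that vanishes at infinity and is piecewise constant on $\Gamma$ must vanish identically; this should follow from the maximum principle applied on $\cK$ (after compactifying at $\infty$), or can be quoted from the framework of~\cite{Nas-siam09} on which Theorem~\ref{T:ie} rests. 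Everything else—the weighted summation of boundary values and the matching of real and imaginary parts—is routine.
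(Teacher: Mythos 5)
Your overall strategy coincides with the paper's: form the difference $f-\sum_{j=1}^{\ell}m_j f_j$, note it is analytic in $\cK$ with value $0$ at infinity and with boundary real part equal to the piecewise constant function $\log\tau-\sum_j m_j h_j$, and conclude it vanishes identically. The gap is exactly at the step you yourself flag as the crux. The fact you propose to use --- that a bounded harmonic function on $\cK$ which vanishes at infinity and is piecewise constant on $\Gamma$ must vanish identically --- is false whenever $\ell\ge 2$. Indeed, let $\omega_k$ denote the harmonic measure of $\Gamma_k$ with respect to $\cK$ (bounded and harmonic in $\cK$, equal to $1$ on $\Gamma_k$ and $0$ on the other boundary curves, with $0<\omega_k(\infty)<1$). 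Then
\begin{equation*}
u \;=\; \omega_1-\frac{\omega_1(\infty)}{\omega_2(\infty)}\,\omega_2
\end{equation*}
is bounded and harmonic in $\cK$, piecewise constant on $\Gamma$, vanishes at infinity, and is not identically zero. So neither the maximum principle nor uniqueness of the Dirichlet problem (which only says that the bounded solution with the given piecewise constant data is unique, not that it is zero) can deliver $\re(f-g)\equiv 0$; your ``more direct'' argument, as written, does not go through.

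The ingredient your argument never uses is that $f-\sum_j m_j f_j$ is a \emph{single-valued} analytic function in the multiply connected domain $\cK$. Single-valuedness forces the conjugate periods $\int_{\Gamma_k}\frac{\partial u}{\partial n}\,ds$ to vanish, and only in combination with the piecewise constant boundary values does this force $u$ to be constant, hence (by the value $0$ at infinity) identically zero; the function $u$ in the counterexample above has a multi-valued conjugate, which is why it survives. This is precisely the result the paper invokes at this point (cited from Muskhelishvili). Equivalently, you could repair the step by appealing to the uniqueness behind Theorem~\ref{T:ie} in the generalized Neumann kernel framework of~\cite{Nas-siam09} with boundary datum $\gamma\equiv 0$: if $h+\i\mu$, with $h$ real piecewise constant and $\mu$ real, are boundary values of a single-valued analytic function in $\cK$ vanishing at infinity, then $h=0$ and $\mu=0$. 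Your parenthetical ``cf.~\cite{Nas-siam09}'' points in this direction, but the justification you actually articulate (bounded, harmonic, zero at infinity, piecewise constant on $\Gamma$ $\Rightarrow$ zero) is the one that fails. Once the correct uniqueness result is in place, the remainder of your proof --- reading off \eqref{eq:mu-mu-j} and \eqref{eq:r-h} by comparing imaginary and real parts of the boundary values --- is correct and is exactly what the paper does.
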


\begin{proof}
Define the auxiliary function $g$ in $\cK$ by
\begin{equation*}
g(z) = f(z) - \sum_{j=1}^\ell m_j f_j(z).
\end{equation*}
Then $g$ is analytic in $\cK$ with $g(\infty) = 0$, since each of the functions
$f$ and $f_j$, $j = 1, 2, \ldots, \ell$, has these properties.  In view
of~\eqref{eq:F-bd} and~\eqref{eq:F-j} the boundary values of $g$ are given by
\begin{equation*}
g(\eta(t)) = \log\tau + \gamma(t) + \i \mu(t) - \sum_{j=1}^\ell m_j \gamma_j(t)
- \sum_{j=1}^\ell m_j h_j(t) - \i \sum_{j=1}^\ell m_j \mu_j(t),
\end{equation*}
which, by~\eqref{eq:gam}, simplifies to
\begin{equation}\label{eq:g-bd}
g(\eta(t))=
\log\tau-\sum_{j=1}^{\ell} m_j h_j(t) + \i \bigg(\mu(t) -\sum_{j=1}^{\ell}
m_j \mu_j(t) \bigg).
\end{equation}
Thus,
\begin{equation*}
\Re[g(\eta(t))]= \log\tau-\sum_{j=1}^{\ell}m_jh_j(t),
\end{equation*}
i.e., the real part of the boundary values of the analytic function $g$ is a
piecewise constant function. Since $g(\infty)=0$, then $g$ is the zero 
function~\cite[p.~165]{Mus77}.  Hence,~\eqref{eq:r-h}
and~\eqref{eq:mu-mu-j} follow from~\eqref{eq:g-bd}.
\eop
\end{proof}

In order to compute the boundary values~\eqref{eq:F-bd} of $f$, it remains to 
compute
the numbers $m_1, m_2, \ldots, m_\ell$.

\begin{theorem}\label{thm:lin_sys_mv_tau}
Let the functions $h_j=(h_{1,j},\ldots,h_{\ell,j})$ be as in 
Theorem~\ref{T:ie}.
The unknown $\ell+1$ real constants $m_1,\ldots,m_\ell, \log\tau$ are the
unique solution of the linear system
\begin{equation}\label{eq:pj-sys}
A \begin{bmatrix}
m_1 \\
m_2 \\
\vdots \\
m_\ell \\
\log\tau \\
\end{bmatrix}
=
\begin{bmatrix}
0 \\
0 \\
\vdots \\
0 \\
1 \\
\end{bmatrix},
\quad \text{ where } A =
\begin{bmatrix}
h_{1,1}  &h_{1,2}  &\cdots  &h_{1,\ell}  &-1 \\
h_{2,1}  &h_{2,2}  &\cdots  &h_{2,\ell}  &-1 \\
\vdots  &\vdots  &\ddots  &\vdots  &\vdots \\
h_{\ell,1}  &h_{\ell,2}  &\cdots  &h_{\ell,\ell}  &-1 \\
1       &1       &\cdots  &1       &0 \\
\end{bmatrix}.
\end{equation}
\end{theorem}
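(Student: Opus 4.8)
The statement has two parts: that the tuple $(m_1,\dots,m_\ell,\log\tau)$ is \emph{a} solution of~\eqref{eq:pj-sys}, and that it is the \emph{unique} one, i.e.\ that $A$ is nonsingular. The first part will be immediate. The last row of~\eqref{eq:pj-sys} asserts $m_1+\dots+m_\ell=1$, which is the constraint~\eqref{eq:mj} built into the definition of a lemniscatic domain. For $k=1,\dots,\ell$, the $k$-th row asserts $\sum_{j=1}^\ell h_{k,j}m_j=\log\tau$; since $h_j$ is by definition the piecewise constant function taking the value $h_{k,j}$ on $J_k$, this is exactly identity~\eqref{eq:r-h} evaluated at any $t\in J_k$. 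So it remains to establish that $A$ is invertible.

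For that, suppose $A(x_1,\dots,x_\ell,y)^{\mathsf T}=0$, i.e.\ $\sum_{j=1}^\ell x_j=0$ and $\sum_{j=1}^\ell h_{k,j}x_j=y$ for every $k$. The plan is to build an analytic function out of the data and force it to vanish. Let $F=\sum_{j=1}^\ell x_j f_j$, which by Theorem~\ref{T:ie} is single-valued and analytic in $\cK$ with $F(\infty)=0$, and let $R(z)=\prod_{j=1}^\ell(z-\alpha_j)^{x_j}$. Since $\alpha_j$ lies in the interior of $\Gamma_j$ (hence outside $\cK$) and $\sum_j x_j=0$, the function $\log\abs{R}=\sum_j x_j\log\abs{z-\alpha_j}$ is single-valued and harmonic in $\cK$, including at $\infty$, where it vanishes. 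Using $\gamma_j(t)=-\log\abs{\eta(t)-\alpha_j}$ together with $\sum_j x_j h_j(t)\equiv y$ (the content of the $\ell$ equations above, since $h_j(t)=h_{k,j}$ on $J_k$), the boundary values~\eqref{eq:F-j} give
\[
\re F(\eta(t)) \;=\; \sum_{j=1}^\ell x_j\bigl(\gamma_j(t)+h_j(t)\bigr)\;=\;-\log\abs{R(\eta(t))}+y,\qquad t\in J.
\]
Hence $u:=\re F+\log\abs{R}$ is harmonic in $\cK$, extends continuously to $\Gamma$ with $u\equiv y$ there, and $u(\infty)=0$.

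Now comes the crux. After a Möbius transformation sending $\infty$ to a finite interior point, $\cK$ becomes a bounded $\ell$-connected domain and $u$ a bounded harmonic function on it with constant boundary values $y$; so by the maximum principle $u\equiv y$. In particular $y=u(\infty)=0$, and therefore $\re F\equiv-\log\abs{R}$ in $\cK$. Locally this says $F+\log R$ has vanishing real part, hence equals a purely imaginary constant, so $F'(z)=-\dfrac{R'(z)}{R(z)}=-\displaystyle\sum_{j=1}^\ell\frac{x_j}{z-\alpha_j}$ throughout $\cK$. Integrating $F'$ over a small positively oriented loop in $\cK$ encircling $\Gamma_k$ gives $0$ because $F$ is single-valued, while the right-hand side integrates to $-2\pi\i\,x_k$ (the only enclosed pole is at $\alpha_k$). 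Thus $x_k=0$ for every $k$, whence $F\equiv F(\infty)=0$. So the homogeneous system has only the trivial solution, $A$ is nonsingular, and the tuple found in the first part is its unique solution.

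The main obstacle is this crux step: correctly invoking uniqueness of the bounded solution of the Dirichlet problem on the unbounded multiply connected domain $\cK$ to pass from "$u$ harmonic, constant on $\Gamma$, zero at $\infty$" to "$u\equiv y$ and $\re F=-\log\abs R$", and then extracting $x_k=0$ from single-valuedness of $F$ via the period integrals of $F'$. The surrounding boundary-value bookkeeping — tracking the piecewise-constant parts $h_j$ and the $\log\abs{\cdot}$ terms, and checking harmonicity and the behavior at $\infty$ of $\log\abs{R}$ — is routine but must be done carefully.
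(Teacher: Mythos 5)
Your proposal is correct and follows essentially the same route as the paper: both show the tuple solves the system via~\eqref{eq:mj} and~\eqref{eq:r-h}, and both prove nonsingularity of $A$ by forming $\sum_j x_j f_j$ plus logarithmic terms for a homogeneous solution, invoking uniqueness of the Dirichlet problem with constant boundary data, and then concluding the coefficients vanish from single-valuedness. Your final step via the period integrals $\oint F'\,dz = -2\pi\i\,x_k$ is simply a more explicit rendering of the paper's remark that a single-valued function cannot equal the multi-valued $-\sum_j x_j\log(z-\alpha_j)$ unless all $x_j=0$.
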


\begin{proof}
As shown above, the $\ell+1$ real constants $m_1, \ldots, m_\ell, \tau$
satisfy~\eqref{eq:r-h}.  Since the functions $h_1, \ldots, h_\ell$ are
piecewise constant, it is easy to see that~\eqref{eq:mj} and~\eqref{eq:r-h} can
be written in the form of the linear algebraic system~\eqref{eq:pj-sys}.

We next show that $A$ is nonsingular.  Suppose that $[q_1,\ldots,q_\ell,c]^T$
is a (real) solution of the homogeneous linear system
\begin{equation}\label{eq:pj-sysh}
A
\begin{bmatrix}
q_1 \\
\vdots \\
q_\ell \\
c \\
\end{bmatrix}
= 0.
\end{equation}
Then we have
\begin{subequations}%\label{eq:qj}
\renewcommand{\theequation}{\theparentequation\alph{equation}}
\begin{align}
\label{eq:qj-1}
\sum_{j=1}^\ell q_j h_j(t) &= c, \\
\label{eq:qj-2}
\sum_{j=1}^\ell q_j &= 0.
\end{align}
\end{subequations}
Let the auxiliary function $\tilde f(z)$ be defined by
\begin{equation*}%\label{eq:hf}
\tilde f(z)=\sum_{j=1}^\ell q_j f_j(z)
\end{equation*}
where the functions $f_j$ are as in Theorem~\ref{T:ie}.
Then $\tilde f$ is analytic in $\cK$ with $\tilde f(\infty)=0$, and its 
boundary values are given by
\begin{equation*}%\label{eq:hf-bd-1}
\tilde f(\eta(t)) = \sum_{j=1}^\ell q_j \gamma_j(t) + \sum_{j=1}^\ell q_j h_j(t)
+ \i\sum_{j=1}^\ell q_j \mu_j(t),
\end{equation*}
which by~\eqref{eq:gam-j} and~\eqref{eq:qj-1} can be written as
\begin{equation*}%\label{eq:hf-bd-2}
\tilde
f(\eta(t)) = c - \sum_{j=1}^\ell q_j \log|\eta(t)-\alpha_j| + \i\sum_{j=1}^\ell
q_j \mu_j(t).
\end{equation*}
Define a function $\tilde g(z)$ on $\cK$ by
\begin{equation}\label{eq:hg}
\tilde g(z) = \tilde f(z) + \sum_{j=1}^\ell q_j \log(z-\alpha_j).
\end{equation}
The function $\tilde g(z)$ is analytic in $\cK$ but is not necessarily
single-valued.  For large $\abs{z}$, we have
\begin{equation*}
\log(z-\alpha_j) = \log\left(z\left(1-\frac{\alpha_j}{z}\right)\right)
=\log z+\log\left(1-\frac{\alpha_j}{z}\right),
\end{equation*}
which implies in view of~\eqref{eq:qj-2} that
\begin{equation*}%\label{eq:log-0}
\sum_{j=1}^\ell q_j \log(z-\alpha_j)
= \log(z) \sum_{j=1}^\ell q_j + \sum_{j=1}^\ell q_j
\log\left(1-\frac{\alpha_j}{z}\right)
= \sum_{j=1}^\ell q_j \log\left(1-\frac{\alpha_j}{z}\right).
\end{equation*}
Hence, the second term in the right-hand side of~\eqref{eq:hg} vanishes at
$\infty$. Since $\tilde f(\infty)=0$, we have $\tilde g(\infty)=0$.
Let the real function $u$ be defined for $z\in \cK \cup \Gamma$ by
\begin{equation*}%\label{eq:u}
u(z) = \Re\tilde g(z).
\end{equation*}
Then $u$ is harmonic in $\cK$, and satisfies the Dirichlet boundary condition
\begin{equation}\label{eq:u-Dir}
u=c \quad \text{on } \Gamma,
\end{equation}
i.e., $u$ is constant on the boundary $\Gamma$.  The Dirichlet
problem~\eqref{eq:u-Dir} has the unique solution $u(z)=c$ for all
$z\in \cK \cup \Gamma$, so that the real part of $\tilde g$ is constant.
Then $\tilde{g}$ is constant in $\cK$ by the Cauchy-Riemann equations, and
$\tilde{g}(\infty) = 0$ shows that $\tilde g(z)=0$ for all $z \in \cK$, and, in
particular, $c = 0$.  Then,~\eqref{eq:hg} implies that
\begin{equation*}%\label{eq:hf-log}
\tilde f(z) = - \sum_{j=1}^\ell q_j \log(z-\alpha_j) \quad \text{ for all } z
\in \cK,
\end{equation*}
which is impossible unless $q_1=q_2=\cdots=q_\ell = 0$ (since the function on
the left-hand side is single-valued and the function on the right-hand side is
multi-valued). Thus, the homogeneous linear system~\eqref{eq:pj-sysh} has only
the trivial solution $[q_1, \ldots, q_\ell, c]^T = 0$, and $A$ is nonsingular.
\eop
\end{proof}

By obtaining the real constants $m_1,\ldots,m_\ell$, we can compute the
functions $\gamma$ and $\mu$ from~\eqref{eq:gam} and~\eqref{eq:mu-mu-j}.
By~\eqref{eq:F-bd}, the boundary values of the analytic function $f$
from~\eqref{eq:F(z)} are given by
\begin{equation}\label{eq:log-an2}
f(\eta(t)) =
\sum_{j=1}^\ell m_j \log\left(\frac{\Phi(\eta(t))-a_j}{\eta(t)-\alpha_j}\right)
=\log\tau+\gamma(t)+\i\mu(t).
\end{equation}
This is the first equation needed to compute the boundary values of the
conformal map $\Phi$.  Note that in~\eqref{eq:log-an2} only $\Phi(\eta(t))$ and
the complex numbers $a_j$ are unknown.  We will need one more set of equations 
to determine these quantities.

\begin{lemma} % \label{L:log-int}
The boundary values of the function $\Phi$ and the $\ell$ constants
$a_1,\ldots, a_\ell$ satisfy the $\ell$ equations
\begin{equation}\label{eq:log-int}
\frac{1}{2\pi\i}\int_{J}\log\left(\frac{\Phi(\eta(t))-a_j}{\eta(t)-\alpha_j}
\right)\dot\eta(t)dt +\alpha_j-a_j=0, \quad j=1,2,\ldots,\ell.
\end{equation}
\end{lemma}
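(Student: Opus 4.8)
The plan is to evaluate a contour integral using the normalization of $\Phi$ near infinity. Consider, for each fixed $j \in \{1,\ldots,\ell\}$, the function
\begin{equation*}
\psi_j(z) = \log\left(\frac{\Phi(z)-a_j}{z-\alpha_j}\right),
\end{equation*}
with the branch of the logarithm chosen so that $\psi_j(\infty)=0$; this is legitimate because, by Theorem~\ref{thm:existence_lem_dom}, $a_j$ lies in the interior of the image curve $\Phi(\Gamma_j)$ and $\alpha_j$ lies in the interior of $\Gamma_j$, so both $\Phi(z)-a_j$ and $z-\alpha_j$ are nonzero and single-valued analytic functions on $\cK$ with a simple zero "balanced out" at infinity. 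The key observation is that near infinity $\Phi(z) = z + O(1/z)$, so
\begin{equation*}
\frac{\Phi(z)-a_j}{z-\alpha_j} = \frac{z - a_j + O(1/z)}{z-\alpha_j} = 1 + \frac{\alpha_j - a_j}{z} + O\!\left(\frac{1}{z^2}\right),
\end{equation*}
and hence $\psi_j(z) = \dfrac{\alpha_j - a_j}{z} + O(1/z^2)$ near infinity.

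Next I would apply the residue theorem (or Cauchy's integral formula for the coefficient of $1/z$ in the Laurent expansion at infinity) to the function $\psi_j$, which is analytic in $\cK$ and continuous up to the boundary $\Gamma$. The boundary $\Gamma = \partial\cK$, oriented so that $\cK$ lies on its left, is negatively oriented with respect to the bounded complementary components but positively oriented "as seen from infinity." Concretely,
\begin{equation*}
\frac{1}{2\pi\i}\oint_{\Gamma} \psi_j(z)\,dz = -\,[\text{coefficient of }1/z\text{ in the expansion of }\psi_j\text{ at }\infty] = -(\alpha_j - a_j) = a_j - \alpha_j,
\end{equation*}
where the sign comes from the standard fact that for a function analytic in the neighborhood of infinity with expansion $\sum_{k\ge 1} c_k z^{-k}$, integrating over a large circle traversed in the positive (counterclockwise) direction gives $2\pi\i\,c_1$, while $\Gamma$ traversed with $\cK$ on the left corresponds to the opposite orientation of that large circle. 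Writing the contour integral in terms of the parametrization $\eta$, $dz = \dot\eta(t)\,dt$, this reads
\begin{equation*}
\frac{1}{2\pi\i}\int_{J}\log\left(\frac{\Phi(\eta(t))-a_j}{\eta(t)-\alpha_j}\right)\dot\eta(t)\,dt = a_j - \alpha_j,
\end{equation*}
which rearranges to exactly~\eqref{eq:log-int}.

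The main obstacle, and the point requiring care, is the orientation bookkeeping together with the branch-of-logarithm and single-valuedness issue. One must verify that $\psi_j$ is genuinely single-valued on $\cK$: this is where the geometric content of Theorem~\ref{thm:existence_lem_dom} is used — the winding number of $\Phi(z)-a_j$ around $0$ as $z$ traverses $\Gamma_j$ equals that of $z-\alpha_j$ around $0$ as $z$ traverses $\Gamma_j$ (both equal $1$ since $a_j$ is interior to $\Phi(\Gamma_j)$ and $\alpha_j$ is interior to $\Gamma_j$, while the winding numbers around the other curves $\Gamma_k$, $k\neq j$, both vanish), so the ratio has winding number zero about every boundary component and therefore admits a single-valued analytic logarithm on $\cK$ vanishing at infinity. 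Once this is established, the residue computation is routine, and the sign in the final identity is fixed by the convention that $\cK$ lies to the left of $\Gamma$. I would also note that this is precisely the imaginary-part-free analogue of how one would recover the $a_j$ from boundary data: equation~\eqref{eq:log-int} expresses each center $a_j$ as $\alpha_j$ plus a boundary integral of the (now known, via~\eqref{eq:log-an2}) function $f$-type data, closing the system together with~\eqref{eq:log-an2}.
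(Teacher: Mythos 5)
Your proposal is correct and follows essentially the same route as the paper: both identify the integral as (minus) the residue at infinity of $\psi_j(z)=\log\bigl((\Phi(z)-a_j)/(z-\alpha_j)\bigr)$ and extract the coefficient $\alpha_j-a_j$ from the normalization $\Phi(z)=z+O(1/z)$, the only cosmetic difference being that the paper computes this coefficient as $g_j'(0)$ after the substitution $g_j(z)=\psi_j(1/z)$ while you read it off the Laurent expansion directly. Your explicit winding-number justification of the single-valuedness of $\psi_j$ and the orientation bookkeeping are details the paper leaves implicit, and they are handled correctly.
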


\begin{proof}
Since the functions
\begin{equation}\label{eq:fj}
\psi_j(z)=\log\left(\frac{\Phi(z)-a_j}{z-\alpha_j}\right), \quad
j=1,2,\ldots,\ell,
\end{equation}
where the branch of the logarithm with $\log(1) = 0$ is chosen,
are analytic in the domain $\cK$ including the point at $\infty$ with
$\psi_j(\infty)=0$, we have
\begin{equation}\label{eq:fj-i1}
\frac{1}{2\pi\i} \int_\Gamma \psi_j(\eta)d\eta = \Res_{z=\infty}\psi_j(z)
= -\Res_{z=0}\frac{1}{z^2}\psi_j\left(\frac{1}{z}\right), \quad
j=1,2,\ldots,\ell;
\end{equation}
see~\cite[pp.~107-108]{Kap66}.  Let $\widehat{\cK}$ be the image of the domain
$\cK$ under the mapping $\frac{1}{z}$ and the functions $g_j(z)$ be defined on
$\widehat{\cK}$ by
\begin{equation*}%\label{eq:gj}
g_j(z) = \psi_j\left(\frac{1}{z}\right), \quad j=1,2,\ldots,\ell.
\end{equation*}
Then $g_j$ is analytic in $\widehat{\cK}$ with $g_j(0)=0$ for
$j=1,2,\ldots,\ell$. Hence
\begin{equation}\label{eq:Res-g}
\Res_{z=0}\frac{1}{z^2}\psi_j\left(\frac{1}{z}\right)
= \lim_{z\to0} \frac{1}{z}\psi_j\left(\frac{1}{z}\right)
= \lim_{z\to0} \frac{g_j(z)}{z}
= g'_j(0), \quad j=1,2,\ldots,\ell.
\end{equation}
To compute $g'_j(0)$, we have from~\eqref{eq:fj}
\begin{equation*}
g_j(z) =\log\left(\frac{\Phi(1/z)-a_j}{1/z-\alpha_j}\right)
=\log\left(\frac{z\Phi(1/z)-a_jz}{1-\alpha_jz}\right).
\end{equation*}
For small $\abs{z}$, the normalization~\eqref{eq:norm} of $\Phi$ 
implies $z \Phi(1/z) = 1 + O(z^2)$, and
\begin{equation*}
g_j(z) = \log \left( \frac{1 - a_j z + O(z^2)}{1 - \alpha_j z} \right),
\end{equation*}
so that
\begin{equation*}
g_j'(z) = \frac{-a_j + O(z)}{1 - a_j z + O(z^2)} - \frac{-\alpha_j}{1 - 
\alpha_j z},
\end{equation*}
which implies
\begin{equation}\label{eq:gj-0}
g_j'(0) = \alpha_j - a_j, \quad j = 1, 2, \ldots, \ell.
\end{equation}
The assertion of the lemma follows from~\eqref{eq:fj}, \eqref{eq:fj-i1},
\eqref{eq:Res-g}, and~\eqref{eq:gj-0}.
\eop
\end{proof}

\section{The numerical computation of the conformal \\ map $\Phi$ and
lemniscatic domain $\cL$}
\label{sect:numerics}

In this section we discuss the numerical aspects of the computation of the
conformal map $\Phi$ and the lemniscatic domain $\cL$ based on the results
from Section~\ref{sect:boundary_vals}.

We first consider the numerical solution of the boundary integral
equations from Theorem~\ref{T:ie}, and the computation of the parameters
$m_1, \ldots, m_\ell$ and $\log \tau$ from Theorem~\ref{thm:lin_sys_mv_tau}.
We then consider the computation of the boundary values of $\Phi$
and of $a_1, \ldots, a_\ell$ by solving the equations~\eqref{eq:log-an2}
and~\eqref{eq:log-int}.  Finally, we discuss the computation of the
values of $\Phi$ at interior points of $\cK$ by Cauchy's integral formula.

\subsection{Computation of the parameters $m_1,\ldots,m_\ell, \log\tau$}
\label{sect:compute_mv_tau}

The $\ell$ boundary integral equations~\eqref{eq:ie-j} can be solved accurately
by the Nystr\"om method with the trapezoidal rule~\cite{Atk97,Kre14}
(see~\cite{Nas-cmft09,Nas-siam09,Nas-fast,Nas-siam13} for more details). Let
$n$ be a given even positive integer. For $k=1,2,\ldots,\ell$, each interval
$J_k$ is discretized by the $n$ equidistant nodes
\[
s_{k,p}=(p-1)\frac{2\pi}{n}\in J_k, \quad p=1,2,\ldots,n.
\]
Hence, the total number of nodes in the parameter domain $J$ is $\ell n$.
We denote these nodes by $t_i$, $i=1,2,\ldots,\ell n$, i.e.,
\begin{equation}\label{e:ti-skp}
t_{(k-1)n+p}=s_{k,p}\in J, \quad k=1,2,\ldots,\ell, \quad p=1,2,\ldots,n.
\end{equation}
For domains with piecewise smooth boundaries, singularity
subtraction~\cite{Rat93} and the trapezoidal rule with a graded
mesh~\cite{Kre90} are used (see~\cite{Nas-fast}). By discretizing the integral
equation~\eqref{eq:ie-j} by the Nystr\"om method with the trapezoidal rule, we
obtain the $\ell n \times \ell n$ linear algebraic system 
$(I-B)\bx=\by$; see, e.g.,
\cite[equation~(59)]{Nas-siam13} for an explicit formula for this system.

Since the integral operator $\bN$ is compact, the only possible
accumulation point of its eigenvalues is~$0$; see, e.g.,~\cite[p.~40]{Kre14}.
Moreover, as shown in~[36, Corollary~2], the eigenvalues
of $\bI-\bN$ are contained in the interval $(0,2]$ 
(see also~\cite[Theorem~10.21]{Kre14}). Consequently,
for sufficiently large $n$, the eigenvalues of the discretized
operator $I-B$ are contained in $(0,2]$ and they cluster around $1$.
Numerical illustrations of this eigenvalue distribution are shown 
in~\cite[Figures~4--5]{Nas-amc11}.

We solve the discretized system $(I-B)\bx=\by$ 
using the (full) GMRES method~\cite{Saa-Sch}.
Each GMRES iteration requires one matrix-vector product with $I-B$.
This product can be efficiently computed using the Fast Multipole Method
in just $O(\ell n)$ operations~\cite{Gre-Gim12,Gre-Rok}. It was already
observed in~\cite{Nas-siam13}, that the number of GMRES iterations
for obtaining a very good approximation of the exact solution
(relative residual norm $<10^{-12}$) is virtually independent of
the given domain and the number of nodes in the
discretization of its boundary. In all numerical experiments we performed,
we found that very few steps of (full) GMRES 
reduce the relative residual norm to $10^{-12}$ or smaller. No
preconditioning was required. Several examples are given
in Section~\ref{sect:examples} below.
We believe that the very fast convergence of GMRES is due to the strong
clustering of the eigenvalues of $I-B$ around $1$ with only a few
``outliers'' and none of these ``outliers'' being close to zero.
A more detailed analysis of this situation is a subject of further work.

In the numerical examples shown in Section~\ref{sect:examples}
the MATLAB function \verb|fbie| from~\cite{Nas-fast} is used in
order to obtain approximations to the unique solution
$\mu_j$ of the integral equation~\eqref{eq:ie-j} and the function $h_j$
in~\eqref{eq:h-j}, respectively. Within \verb|fbie| we apply the MATLAB
function \verb|gmres| with the tolerance $10^{-14}$ for the relative
residual norm. The matrix-vector product with $I-B$
is computed using
the MATLAB function \verb|zfmm2dpart| from the MATLAB toolbox FMMLIB2D
developed by Greengard and Gimbutas~\cite{Gre-Gim12}.
We thus obtain approximations to the values of the functions $\mu_j$ and
$h_j$ for $j=1,\ldots,\ell$, at the points $t_i$ for $i=1,2,\ldots,\ell n$.
Then the values of the constants $h_{k,j}$ are approximated by
\begin{equation*}%\label{eq:hkj}
h_{k,j} = \frac{1}{n}\sum_{i=1+(k-1)n}^{kn}h_j(t_i), \quad
j=1,2,\ldots,\ell, \quad k=1,2,\ldots,\ell.
\end{equation*}
Since the function \verb|fbie| requires $O(\ell n \log n)$ operations,
the computational cost for solving the $\ell$ integral equations~\eqref{eq:ie-j}
and computing the $\ell$ functions $h_j$ in~\eqref{eq:h-j} is $O(\ell^2 n \log
n)$ operations.
For more details, we refer the reader to~\cite{Gre-Gim12,Nas-fast,Nas-siam13}.

Next, the values of the parameters $m_1,\ldots,m_\ell,\log\tau$ are computed by
solving the linear algebraic system~\eqref{eq:pj-sys}.  Since $\ell$ usually is 
not large, this $(\ell+1) \times (\ell+1)$ system can be solved directly, e.g., 
using backslash in MATLAB.
Finally, the values of the functions $\gamma$ and $\mu$ at the points $t_i$ for
$i=1,2,\ldots,\ell n$ can be computed from~\eqref{eq:gam} 
and~\eqref{eq:mu-mu-j}.

%----------------------------------------------------------------
\subsection{Computation of the $a_j$ and the boundary values of $\Phi$}
\label{sect:compute_aj_Phi}

In the preceding section, we have computed the parameters $m_1,\ldots,m_\ell,
\log\tau$ and the values of the functions $\mu$ and $h_j$ at the points $t_i$
for $i=1,2,\ldots,\ell n$.
In this section, we shall compute the values of $a_1,\ldots, a_\ell$
and the values of the function $\Phi(\eta(t))$ at the points $t_i$ for
$i=1,2,\ldots,\ell n$, by solving a non-linear system of equations.

\subsubsection{The non-linear system}

Let
\begin{equation} \label{eqn:bdry_vals}
w_i=\Phi(\eta(t_i)), \quad p_i=\log\tau+\gamma(t_i)+\i\mu(t_i), \quad 
i=1,2,\ldots,\ell n.
\end{equation}
The $p_i$ are known from Section~\ref{sect:compute_mv_tau}.
We will compute $w_1, w_2, \ldots, w_{\ell n}$ and $a_1, a_2, \ldots$, $a_\ell$.
We have from~\eqref{eq:log-an2} the following $\ell n$ non-linear
algebraic equations in the $\ell n+\ell$ unknowns $w_1,w_2,\ldots,w_{\ell n},
a_1,\ldots,a_\ell$,
\begin{subequations}\label{e:non}
\renewcommand{\theequation}{\theparentequation\alph{equation}}
\begin{equation}\label{eq:non-1}
\sum_{j=1}^{\ell}m_j\log\left(\frac{w_i-a_j}{\eta(t_i)-\alpha_j}\right)
=p_i, \quad i=1,2,\ldots,\ell n.
\end{equation}
By discretizing the integral in~\eqref{eq:log-int}, we also have the following 
$\ell$ non-linear algebraic equations in the $\ell n+\ell$ unknowns 
$w_1,w_2,\ldots,w_{\ell n}, a_1,\ldots,a_\ell$,
\begin{equation}\label{eq:non-2}
\frac{1}{n\i}\sum_{j=1}^{\ell
n}\log\left(\frac{w_j-a_i}{\eta(t_j)-\alpha_i}\right) \dot\eta(t_j)
+\alpha_i-a_i=0, \quad i=1,2,\ldots,\ell.
\end{equation}
\end{subequations}

Let $\bz$ be the vector of the $\ell n+\ell$ unknowns
$w_1,w_2,\ldots,w_{\ell n},a_1,\ldots,a_\ell$, i.e.,
\[
\bz=
\begin{bmatrix}
w_1 &
\ldots &
w_{\ell n} &
a_1 &
\ldots &
a_\ell
\end{bmatrix}^T \in\CC^{\ell n+\ell},
\]
and let the function $F$ be defined by
\[
F(\bz)=
\begin{bmatrix}
\sum_{j=1}^{\ell}m_j\log\left(\frac{w_1-a_j}{\eta(t_1)-\alpha_j}\right)-p_1 \\
\vdots \\
\sum_{j=1}^{\ell}m_j\log\left(\frac{w_{\ell n}-a_j}{\eta(t_{\ell 
n})-\alpha_j}\right)-p_{\ell n}	\\
\frac{1}{n\i}\sum_{j=1}^{\ell 
n}\log\left(\frac{w_j-a_1}{\eta(t_j)-\alpha_1}
\right)\dot\eta(t_j)+\alpha_1-a_1\\
\vdots	\\
\frac{1}{n\i}\sum_{j=1}^{\ell
n}\log\left(\frac{w_j-a_\ell}{\eta(t_j)-\alpha_\ell}\right)\dot\eta(t_j)
+\alpha_\ell-a_\ell\\
\end{bmatrix} \in \C^{\ell n + \ell}.
\]
Then the system of non-linear equations~\eqref{e:non} can be written as
\begin{equation}\label{eq:non-sys}
F(\bz)={\bf 0}.
\end{equation}

\subsubsection{Solving the non-linear system~(\ref{eq:non-sys})}

We shall solve the non-linear system~\eqref{eq:non-sys} using Newton's
iterative method
\begin{equation}\label{eq:non-new}
\bz^{k+1}=\bz^k-\left[F'(\bz^k)\right]^{-1}F(\bz^k), \quad k=0,1,2,\ldots\,,
\end{equation}
where $F'(\bz)$ is the Jacobian matrix of the function $F$ and is given by
\begin{equation*}%\label{eq:F'}
F'(\bz) = \begin{bmatrix} D & A_1 \\ A_2 & - I_\ell \end{bmatrix} \in \C^{\ell
n + \ell, \ell n + \ell},
\end{equation*}
where $I_\ell$ is the $\ell\times\ell$ identity matrix, and
\begin{align*}
D &=
\begin{bmatrix}
\sum_{j=1}^{\ell}\frac{m_j}{w_1-a_j} \\
  &\sum_{j=1}^{\ell}\frac{m_j}{w_2-a_j} \\
 &   &\ddots  \\
 &   &  &\sum_{j=1}^{\ell}\frac{m_j}{w_{\ell n}-a_j}
\end{bmatrix}
\in \C^{\ell n,\ell n}, %\label{eq:D} 
\\
A_1 &=
\begin{bmatrix}
\frac{-m_1}{w_1-a_1} &\frac{-m_2}{w_1-a_2} &\ldots &\frac{-m_\ell}{w_1-a_\ell}\\
\frac{-m_1}{w_2-a_1} &\frac{-m_2}{w_2-a_2} &\ldots &\frac{-m_\ell}{w_2-a_\ell}\\
\vdots &\vdots &\ddots &\vdots\\
\frac{-m_1}{w_{\ell n}-\beta_1} &\frac{-m_2}{w_{\ell n}-a_2} &\ldots 
&\frac{-m_\ell}{w_{\ell n}-a_\ell}\\
\end{bmatrix}\in \C^{\ell n, \ell}, %\label{eq:A1} 
\\
A_2 &=
\begin{bmatrix}
\frac{1}{n\i}\frac{\dot\eta(t_1)}{w_1-a_1} 
&\frac{1}{n\i}\frac{\dot\eta(t_2)}{w_2-a_1} &\cdots 
&\frac{1}{n\i}\frac{\dot\eta(t_{\ell n})}{w_{\ell n}-a_1} \\
\frac{1}{n\i}\frac{\dot\eta(t_1)}{w_1-a_2} 
&\frac{1}{n\i}\frac{\dot\eta(t_2)}{w_2-a_2} &\cdots 
&\frac{1}{n\i}\frac{\dot\eta(t_{\ell n})}{w_{\ell n}-a_2} \\
\vdots & \vdots & \ddots & \vdots \\
\frac{1}{n\i}\frac{\dot\eta(t_1)}{w_1-a_\ell} 
&\frac{1}{n\i}\frac{\dot\eta(t_2)}{w_2-a_\ell} &\cdots 
&\frac{1}{n\i}\frac{\dot\eta(t_{\ell n})}{w_{\ell n}-a_\ell} 	\\
\end{bmatrix}
\in \C^{\ell,\ell n}. %\label{eq:A2}
\end{align*}

Let us discuss the choice of the starting point $\bz^0$.
In our numerical examples, a good choice for the starting point of $a_j$ has
been found to be the center of mass of the boundary curve $\Gamma_j$, scaled by
some factor $> 1$.
A good choice for the starting point for the boundary values has been found to
be small circles around $a_j$.
In the following we assume that a suitable starting point $\bz^0$ for the
Newton method is used, so that, in particular, the matrix $F'(\bz^k)$ is
invertible in each iteration step.

For each iteration $k$ in~\eqref{eq:non-new}, it is required to solve the
linear system
\begin{equation}\label{eqn:original_sys}
F'(\bz^k) \bv = F(\bz^k)
\end{equation}
for $\bv \in \C^{\ell n + \ell}$.  By taking into account the block
structure of $F'(\bz)$, the system~\eqref{eqn:original_sys} can be reduced to
an $\ell \times \ell$ linear system, as we show next.

The vectors $\bv$ and $F(\bz^k)$ can be partitioned as
\begin{equation*}
\bv = \begin{bmatrix} \bx \\ \by \end{bmatrix}, \quad
F(\bz^k) = \begin{bmatrix} \bvb \\
\bc \end{bmatrix},
\end{equation*}
where $\bx,\bvb\in\C^{\ell n}$ and $\by,\bc\in\C^\ell$. Hence
equation~\eqref{eqn:original_sys} is equivalent to the linear system
\begin{equation}\label{eqn:equivalent_sys}
\begin{split}
D\bx + A_1\by &= \bvb, \\
A_2\bx - \by &= \bc.
\end{split}
\end{equation}

\begin{lemma}
The diagonal matrix $D = [d_{ij}] \in \C^{\ell n, \ell n}$ satisfies
\begin{equation} \label{eq:dii}
d_{ii} = \frac{U'(w_i)}{U(w_i)}, \quad i = 1, 2, \ldots, \ell n,
\end{equation}
where $U(w) = \prod_{j=1}^\ell (w-\beta_j)^{m_j}$, the $\beta_1, \beta_2,
\ldots, \beta_\ell$ are the last $\ell$ entries of $\bz^k$, and $m_1, \ldots,
m_\ell$ are the exponents of the lemniscatic domain $\cL$.
\end{lemma}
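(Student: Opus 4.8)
The plan is to simply read off the entries of $D$ from the block form of the Jacobian $F'(\bz)$ and then recognize the resulting sum as a logarithmic derivative. Recall that the diagonal of $D$ was obtained by differentiating the first $\ell n$ components of $F$, namely the expressions $\sum_{j=1}^\ell m_j \log\!\big((w_i-a_j)/(\eta(t_i)-\alpha_j)\big) - p_i$, with respect to $w_i$; since the denominators $\eta(t_i)-\alpha_j$ and the constants $p_i$ do not depend on $w_i$, the $(i,i)$ entry is $\sum_{j=1}^\ell \frac{m_j}{w_i-a_j}$. Evaluating at the current Newton iterate $\bz^k$, whose first $\ell n$ entries are $w_1,\dots,w_{\ell n}$ and whose last $\ell$ entries are $\beta_1,\dots,\beta_\ell$, this reads
\begin{equation*}
d_{ii} = \sum_{j=1}^\ell \frac{m_j}{w_i-\beta_j}, \quad i=1,2,\ldots,\ell n.
\end{equation*}

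Next I would introduce $U(w) = \prod_{j=1}^\ell (w-\beta_j)^{m_j}$ as in the statement and apply logarithmic differentiation. Formally, $\log U(w) = \sum_{j=1}^\ell m_j \log(w-\beta_j)$, so that
\begin{equation*}
\frac{U'(w)}{U(w)} = \sum_{j=1}^\ell \frac{m_j}{w-\beta_j}
\end{equation*}
for every $w \notin \{\beta_1,\dots,\beta_\ell\}$. Comparing the two displays gives $d_{ii} = U'(w_i)/U(w_i)$, which is~\eqref{eq:dii}. One should remark that this identity makes sense precisely because $w_i = \Phi(\eta(t_i))$ lies on $\partial\cL$ (or rather, for the iterates, is an approximation thereof) and hence is distinct from the centers $\beta_j$, so that $U(w_i)\neq 0$; this is consistent with the standing assumption that $F'(\bz^k)$ is invertible, since $d_{ii}\neq 0$ is needed for $D$ to be invertible.

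There is essentially no obstacle here: the lemma is a direct bookkeeping observation, and the only thing to be careful about is the notational bookkeeping, i.e., that the $a_j$ appearing in the generic formula for $D$ must be replaced by the current iterate values $\beta_j$ when $F'$ is evaluated at $\bz^k$. The payoff, which motivates stating this as a separate lemma, is that it connects $D$ to the function $U$ from Lemma~\ref{lem:Upne0}, so that the already-established fact $U'(w)\neq 0$ on $\partial\cL$ can be invoked to control the invertibility of $D$ (and hence of $F'$) near the solution.
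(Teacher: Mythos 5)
Your proposal is correct and follows essentially the same route as the paper: the identity $U'(w)/U(w) = \sum_{j=1}^\ell m_j/(w-\beta_j)$ via logarithmic differentiation, followed by substituting $w = w_i$ into the known diagonal entries $d_{ii} = \sum_{j=1}^\ell m_j/(w_i-\beta_j)$ of the Jacobian block $D$. The extra remarks on evaluating at the iterate $\bz^k$ and on $U(w_i)\neq 0$ are sound bookkeeping but not needed beyond what the paper does.
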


\begin{proof}
For the function $U$, we have
\[
\frac{U'(w)}{U(w)} = \frac{d}{dw} \log U(w)
= \frac{d}{dw} \sum_{j=1}^{\ell}m_j\log(w-a_j)
= \sum_{j=1}^{\ell}\frac{m_j}{w-a_j}.
\]
Substituting $w = w_i$ shows~\eqref{eq:dii}.
\eop
\end{proof}

In view of Lemma~\ref{lem:Upne0}, the lemma suggests that $D$ is non-singular
if $\bz^k$ is close to the solution of $F(\bz) = 0$.  This has also been
observed in all numerical experiments; see Section~\ref{sect:examples}.

If $D$ is not singular,
we can rewrite the first equation in~\eqref{eqn:equivalent_sys} as
\begin{equation}\label{eqn:x}
\bx = D^{-1} (\bvb - A_1\by )
\end{equation}
and insert this in the second equation in~\eqref{eqn:equivalent_sys} to obtain
\begin{equation}\label{eqn:y_sys}
( A_2 D^{-1} A_1 + I_\ell ) \by = A_2 D^{-1} \bvb - \bc.
\end{equation}
Now, we get a solution of~\eqref{eqn:original_sys} by solving the
$\ell \times \ell$ system~\eqref{eqn:y_sys} and computing $\bx$ 
by~\eqref{eqn:x},
instead of solving the $(\ell n + \ell) \times (\ell n + \ell)$
system~\eqref{eqn:original_sys} directly. The $\ell \times \ell$
system~\eqref{eqn:y_sys} can be solved using a direct method such as the
Gauss elimination method since $\ell$ is usually small.
If $D$ is non-singular, the matrix $A_2 D^{-1} A_1 + I_\ell$ is invertible if
and only if $F'(\bz^k)$ is invertible.  Thus the system~\eqref{eqn:y_sys}
is then uniquely solvable.

Next we show that it is possible to compute the vectors $\bx$ in~\eqref{eqn:x}
and $\by$ in~\eqref{eqn:y_sys} \emph{without} forming the matrices $A_1$, $A_2$
or $D$ first, by taking into account the Cauchy structure of $A_1$ and $A_2$.
Indeed, with the Cauchy matrix
\begin{equation*}
C
= \begin{bmatrix}
\frac{1}{w_1-a_1} & \frac{1}{w_1-a_2} & \ldots & \frac{1}{w_1-a_\ell} \\
\frac{1}{w_2-a_1} & \frac{1}{w_2-a_2} & \ldots & \frac{1}{w_2-a_\ell} \\
\vdots & \vdots & \ddots & \vdots \\
\frac{1}{w_{\ell n}-a_1} & \frac{1}{w_{\ell n}-a_2} & \ldots &
\frac{1}{w_{\ell n}-a_\ell}
\end{bmatrix}
= \begin{bmatrix} \frac{1}{w_r - a_s} \end{bmatrix}_{r, s} \in \C^{\ell n,
\ell},
\end{equation*}
the matrices $A_1$ and $A_2$ can be written as
\begin{equation*}
A_1 = - C \begin{bmatrix} m_1 \\ & m_2 \\ & & \ddots \\ & & & m_\ell
\end{bmatrix},
% \text{ and } 
A_2 = \frac{1}{n \i} C^T \begin{bmatrix} \dot\eta(t_1) \\ & \dot\eta(t_2) \\ &
&
\ddots \\ & & & \dot\eta(t_{\ell n}) \end{bmatrix}.
\end{equation*}
Then the matrix $A_2 D^{-1} A_1$ in the linear system~\eqref{eqn:y_sys} can
be written as
\begin{equation*}%\label{eq:A2DA1}
A_2 D^{-1} A_1 = \frac{\i}{n} \begin{bmatrix} \sum_{k=1}^{\ell n} 
\frac{m_s}{(w_k-a_r)
(w_k-a_s)} \frac{\dot{\eta}(t_k)}{ \sum_{j=1}^\ell \frac{m_j}{w_k-a_j} }
\end{bmatrix}_{r, s} \in \C^{\ell, \ell},
\end{equation*}
so that we can generate the entries of this matrix directly, without first
forming $A_1$ or $A_2$.  Similarly we can write the right-hand side
of~\eqref{eqn:y_sys} as
\begin{equation*}%\label{eq:A2Db}
A_2 D^{-1}\bvb - \bc = \frac{1}{n \i} \begin{bmatrix} \sum_{k=1}^{\ell n}
\frac{b_{k} }{w_k - a_r} \frac{\dot{\eta}(t_k)}{ \sum_{j=1}^\ell
\frac{m_j}{w_k-a_j} }
\end{bmatrix}_{r = 1, 2, \ldots, \ell} - \bc \in \C^\ell,
\end{equation*}
with $\bvb = [ b_{1}, b_{2}, \ldots, b_{\ell n}]^T$. Finally, for computing the
vector $\bx$, we have from~\eqref{eqn:x},
\begin{equation*}%\label{eq:x}
\bx=D^{-1} ( \bvb - A_1\by) =\begin{bmatrix} \frac{1 }{ \sum_{j=1}^\ell
\frac{m_j}{w_r-a_j} }
\left(b_r+\sum_{k=1}^{\ell} \frac{m_ky_k}{w_r-a_k}\right)
\end{bmatrix}_{r= 1, 2, \ldots, \ell n}
\end{equation*}
with $\by = [ y_{1}, y_{2}, \ldots, y_{\ell}]^T$.

The pseudo-code in Algorithm~\ref{alg:method} summarizes our method described 
in Sections~\ref{sect:compute_mv_tau}--\ref{sect:compute_aj_Phi}.

\begin{algorithm}[h!]
\caption{Pseudo-code of the method}\label{alg:method}
\textbf{Input:} discretization $\bt$ of $J$ as in~\eqref{e:ti-skp}, 
parametrization $\eta(\bt), \dot{\eta}(\bt)$ of the boundary of $\cK$, and 
auxiliary points $\alpha_j$ in the interior of the boundary curves $\Gamma_j$ 
($j = 1, \ldots, \ell$). \\
\textbf{Output:} boundary values $\Phi(\eta(\bt))$ and parameters $a_1, \ldots, 
a_\ell, m_1, \ldots, m_\ell, \tau$ of the lemniscatic domain.

\bigskip

\begin{compactenum}
\item[] For $j = 1, \ldots, \ell$:
\begin{compactenum}
\item[] Compute $\gamma_j(\bt) = -\log \abs{\eta(\bt)-\alpha_j}$ 
from~\eqref{eq:gam-j}.
\item[] Compute $\mu_j(\bt)$ and $h_j(\bt)$ from the boundary integral 
equation~\eqref{eq:ie-j} and \eqref{eq:h-j} with the MATLAB function 
\verb|fbie|.
\end{compactenum}
\item[] End
\item[] Compute $m_1, \ldots, m_\ell, \log\tau$ by solving the linear algebraic 
system~\eqref{eq:pj-sys}.
\item[] Compute the $p_i$ from~\eqref{eqn:bdry_vals}, where $\gamma(\bt)$ and 
$\mu(\bt)$ are given by~\eqref{eq:gam} and~\eqref{eq:mu-mu-j}.
\item[] Compute the boundary values of $\Phi$ and $a_1,\dots,a_\ell$ by solving 
the nonlinear system~\eqref{eq:non-sys} with Newton's method.
\end{compactenum}
\end{algorithm}

We have used this method in the numerical experiments shown in 
Section~\ref{sect:examples}.

\subsection{Computation of the interior values of $\Phi$}

The method described above yields boundary values of the function 
$\Phi$, namely the values $\Phi(\eta(t))$ at the points $t_i$ for 
$i=1,2,\dots,\ell n$. The values of $\Phi$ at interior points $z\in \KK$ can be 
computed by Cauchy's integral formula applied to the function $\Phi(z) - z$, 
which is analytic throughout $\cK$ and vanishes at $\infty$,
\begin{equation*}
\Phi(z) = z + \frac{1}{2\pi\i}\int_J
\frac{ (\Phi(\eta(t)) - \eta(t)) \, \dot\eta(t)}{\eta(t)-z} \, dt.
\end{equation*}
A fast and accurate method to compute the Cauchy integral formula has been
given in~\cite{Nas-fast} (see also~\cite{Aus13,Hel-Oja,Nas-siam13}). The method
is based on using the MATLAB function \verb|zfmm2dpart| in~\cite{Gre-Gim12}. To
compute the Cauchy integral formula at $p$ interior points, the method requires
$O(p+\ell n)$ operations.

\begin{remark}
The Cauchy integral formula can also be used to compute the values of the 
inverse mapping $\Phi^{-1}$ for interior points $w\in\LL$~\cite[p.~380]{Hen3}. 
However, it requires the boundary values of both the function $\Phi$
and its derivative $\Phi'$, i.e., $\Phi(\eta(t))$ and $\Phi'(\eta(t))$ at 
the points $t_i$ for $i=1,2,\dots,\ell n$. For computing the boundary values 
of $\Phi'$, we can use the boundary integral equation with the adjoint 
Neumann kernel as in~\cite{Yun-proc14}. Alternatively, we can compute 
$\Phi'(\eta(t))$ numerically from $\Phi(\eta(t))$.
\end{remark}

%-------------------------------------------------------------------
\section{Numerical examples}
\label{sect:examples}

In this section we present numerical examples for five domains that illustrate 
our method.

\begin{example}\label{ex:1}
{\rm We consider the unbounded domain $\cK$ exterior to the two circles
\begin{equation*}
\eta_{1,2}(t)= \pm 1 + r e^{-\i t}, \quad 0 \leq t \leq 2\pi, \quad 0 < r < 1,
\end{equation*}
for different values of the radius $r$.  The corresponding conformal
map $\Phi$ and lemniscatic domain
\begin{equation*}
\cL = \{ w \in \widehat{\C} : |w-a_1|^{m_1}\,|w-a_2|^{m_2}>\tau \}
\end{equation*}
(both depending on the value of $r$) have been derived analytically
in~\cite[Section~4]{Set-Lie15}.  We therefore can compare our numerically 
computed parameters with the exact values $a_1$, $a_2$, $m_1$, $m_2$, $\tau$ 
defining $\cL$. 
Figure~\ref{f:ex-1} shows the domains $\cK$ for $r=0.5$, $r=0.7$, 
and $r=0.9$ and the corresponding lemniscatic domains $\cL$.

We denote the numerically computed parameters of the lemniscatic domain by
$a_{1,n}$, $a_{2,n}$, $m_{1,n}$ ,$m_{2,n}$, $\tau_n$, where $n$ is the number 
of nodes
in the discretization of each boundary component. The (absolute) errors
\begin{align*}
E_{a,n} &= \max\{|a_1-a_{1,n}|,|a_2-a_{2,n}|\}, \\
E_{m,n} &= \max\{|m_1-m_{1,n}|,|m_2-m_{2,n}|\}, \\
E_{\tau,n} &= |\tau-\tau_n|
\end{align*}
are shown in Figure~\ref{f:ex-1-ex-err}.  We observe that all errors are
quite small already for a small number of nodes. In fact, with $n = 2^6 = 64$ 
nodes the errors in this example are close to the machine precision level of 
$10^{-16}$.  Increasing the number of nodes beyond this point leads to some 
irregularities in the observed convergence behavior, which most likely is due 
to the some slight differences in the accuracy of the computed solution of the 
respective linear algebraic systems.  Since all errors remain on the order of 
$10^{-14}$ or smaller, we did not further investigate this phenomenon. 

Our method requires solving one linear algebraic system with the matrix $I-B$
of size $\ell n \times \ell n$ with a different right hand side for each 
boundary component.  In this example we have $\ell=2$, and hence there are two 
linear algebraic systems to be solved for each value of $r$ and $n$.  As 
described in Section~\ref{sect:compute_mv_tau}, we use the (full and 
unpreconditioned) GMRES method for this task. In Figure~\ref{fig:gmres_rr} we 
plot \emph{all} relative residual norms of the GMRES method we obtained for the 
two linear algebraic systems, $r = 0.5, 0.7, 0.9$, and $n=2^6, 2^7, \dots, 
2^{10}$.  Thus, Figure~\ref{fig:gmres_rr} shows the GMRES convergence for $30$ 
linear algebraic systems.  We observe that the number of GMRES iteration steps 
required to attain a relative residual norm on the order of $10^{-14}$ is 
\emph{very} small and almost independent of parameters in the linear algebraic 
systems (namely the right hand side, $r$, and $n$). We have indicated 
reasons for this observation in Section~\ref{sect:compute_mv_tau}, but, as 
mentioned there, a detailed analysis is the subject of future work.

The 2-norm condition numbers of the matrices $D$ and $A_2D^{-1}A_1+I_{\ell}$ 
in the Newton iteration are shown in Figure~\ref{f:ex-1-cond}.  
All matrices stay quite well conditioned throughout the iteration.
The same observation can be made in the following examples as well. 
Figure~\ref{f:ex-1-err} shows the norms $\|\bz^{k+1}-\bz^{k}\|_\infty$ 
in the Newton iteration.  
}\end{example}

\begin{figure}%[tp] %
\centerline{
\includegraphics[width=0.5\textwidth]{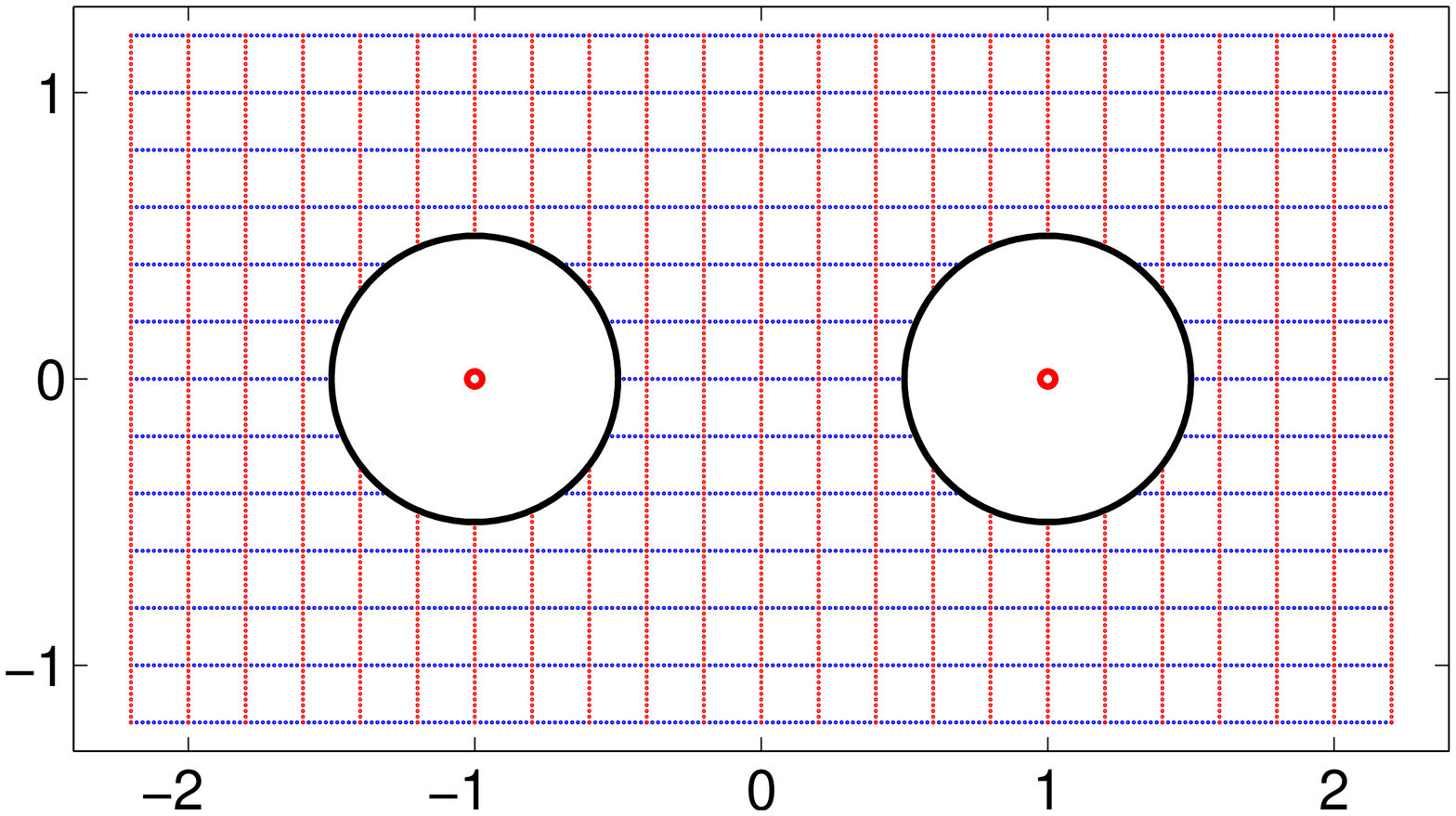}
\includegraphics[width=0.5\textwidth]{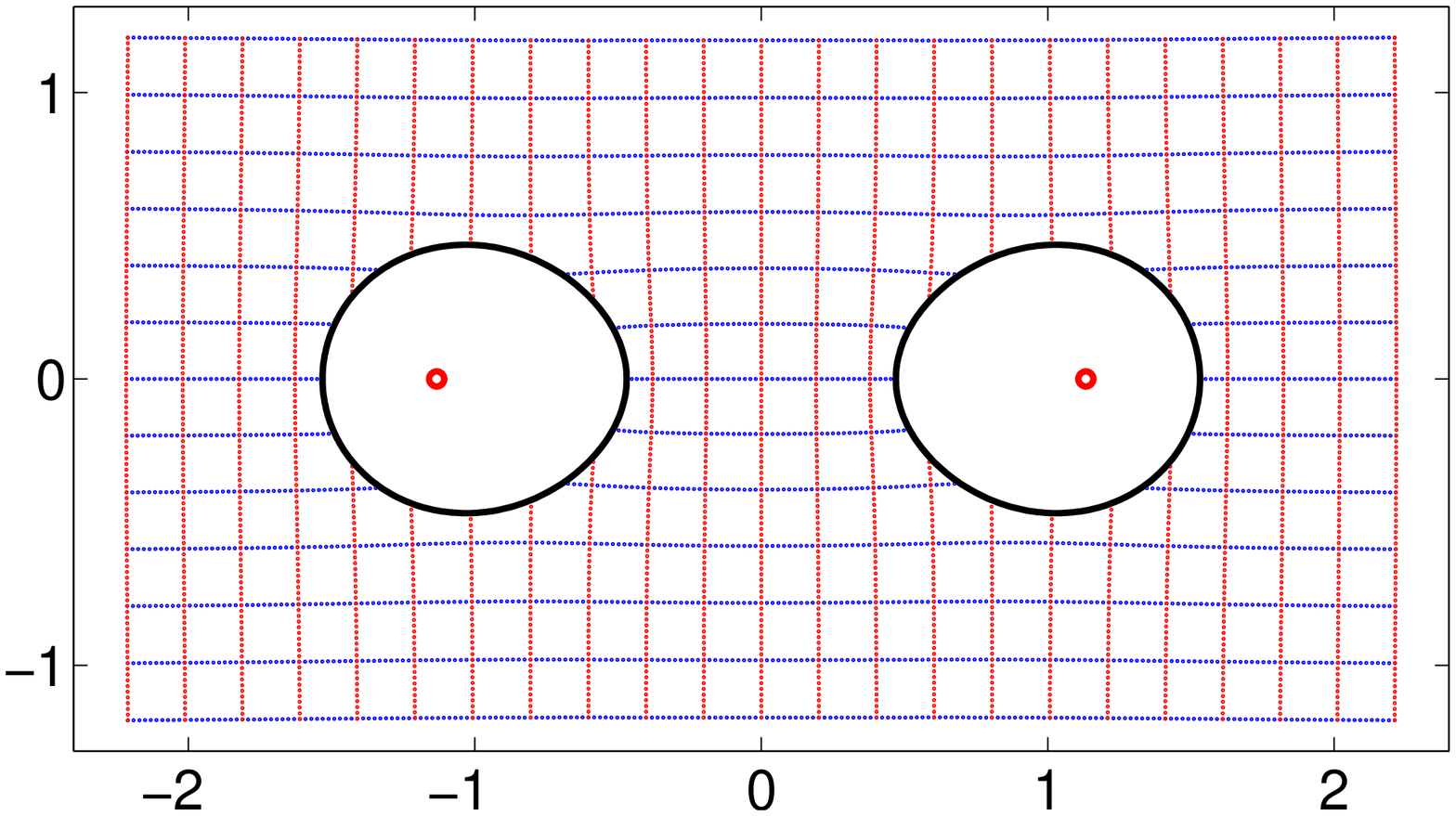}
}
\centerline{
\includegraphics[width=0.5\textwidth]{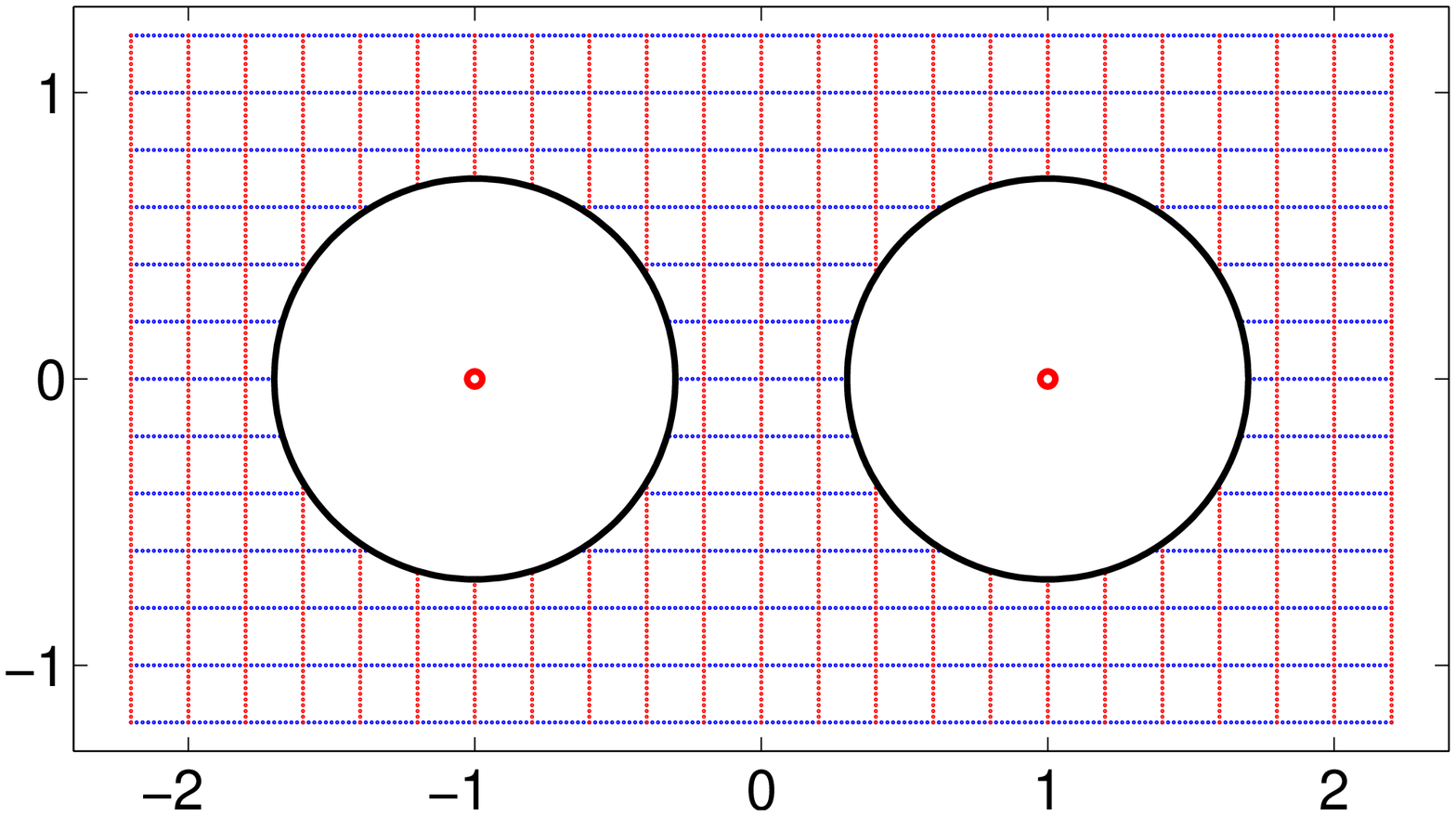}
\includegraphics[width=0.5\textwidth]{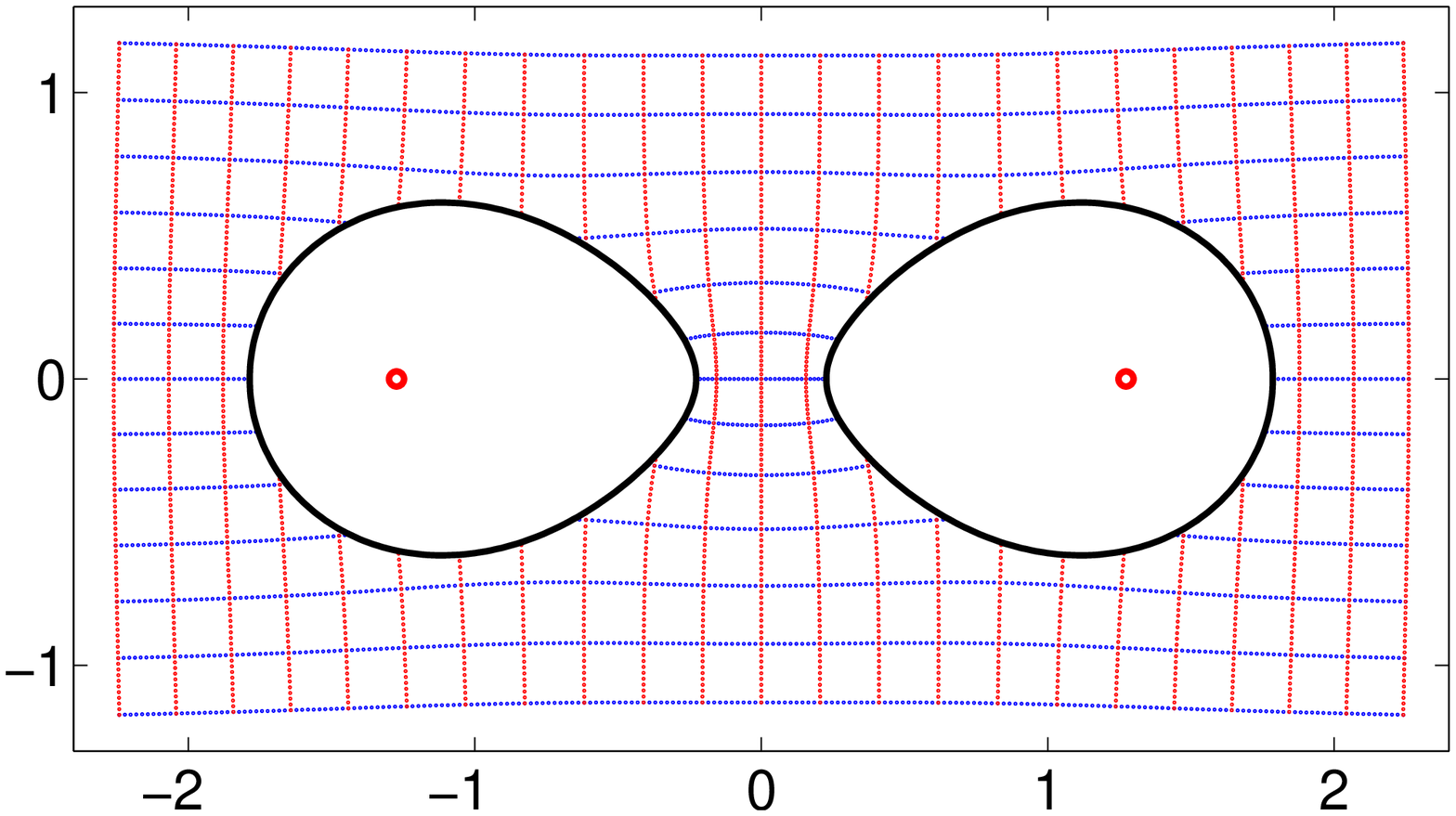}
}
\centerline{
\includegraphics[width=0.5\textwidth]{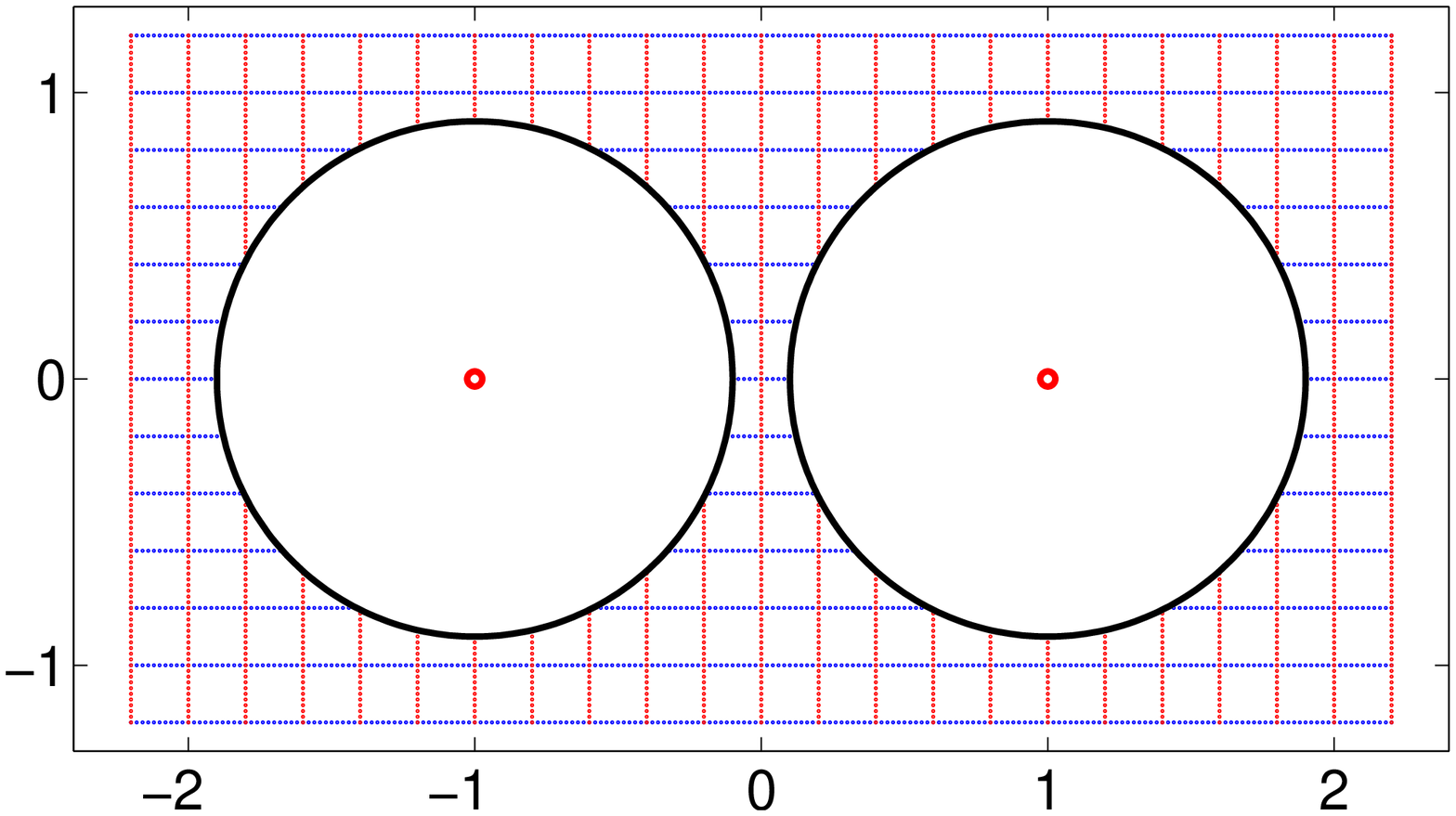}
\includegraphics[width=0.5\textwidth]{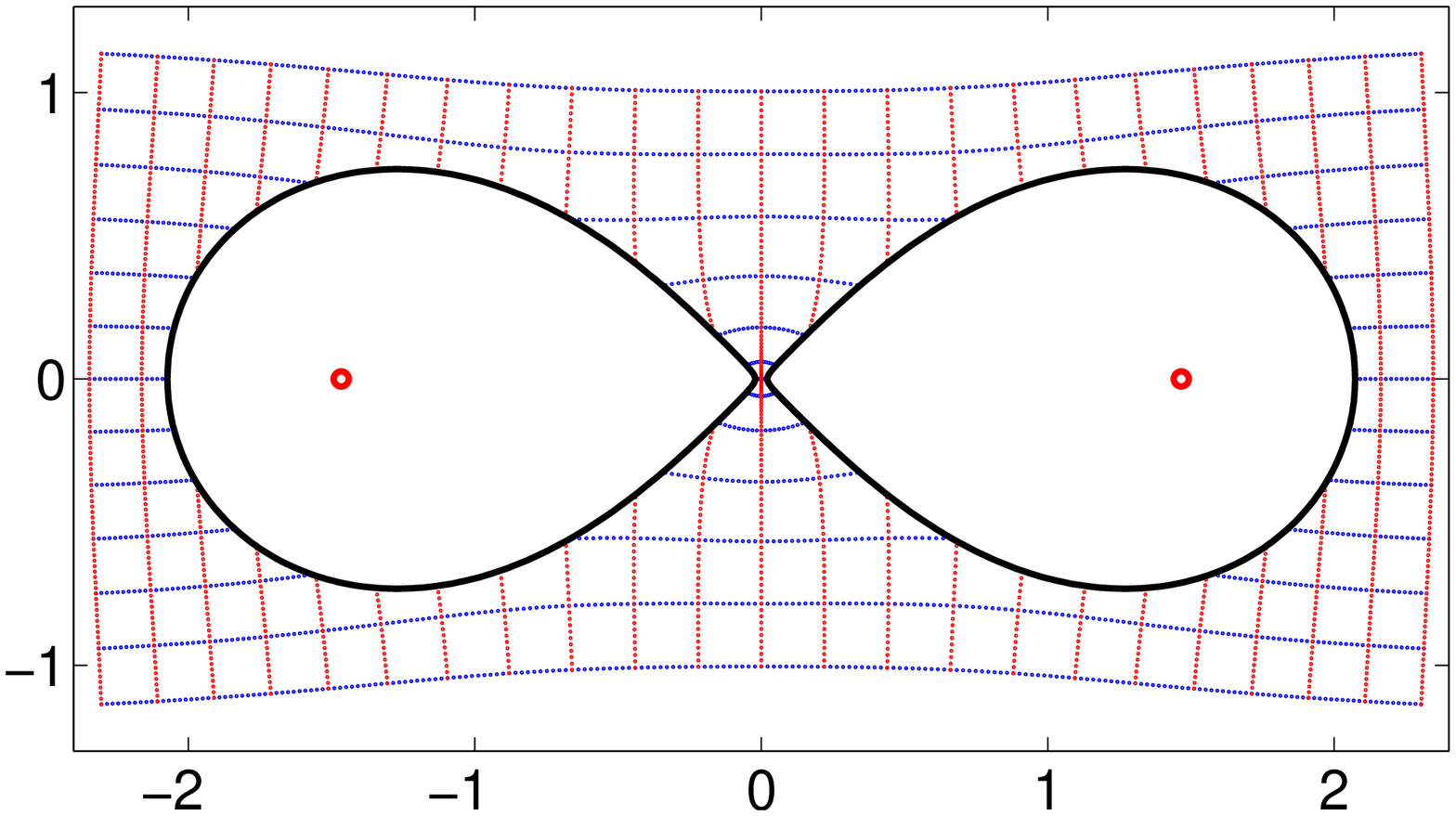}
}
\caption{Original domains for Example~\ref{ex:1} (left) and 
corresponding lemniscatic domains (right) obtained with $n=256$ 
and for $r=0.5$ (top), $r=0.7$ (middle), and $r=0.9$ (bottom).}
\label{f:ex-1}
\end{figure}

\begin{figure}%[tp] %
\centerline{
\includegraphics[width=0.5\textwidth]{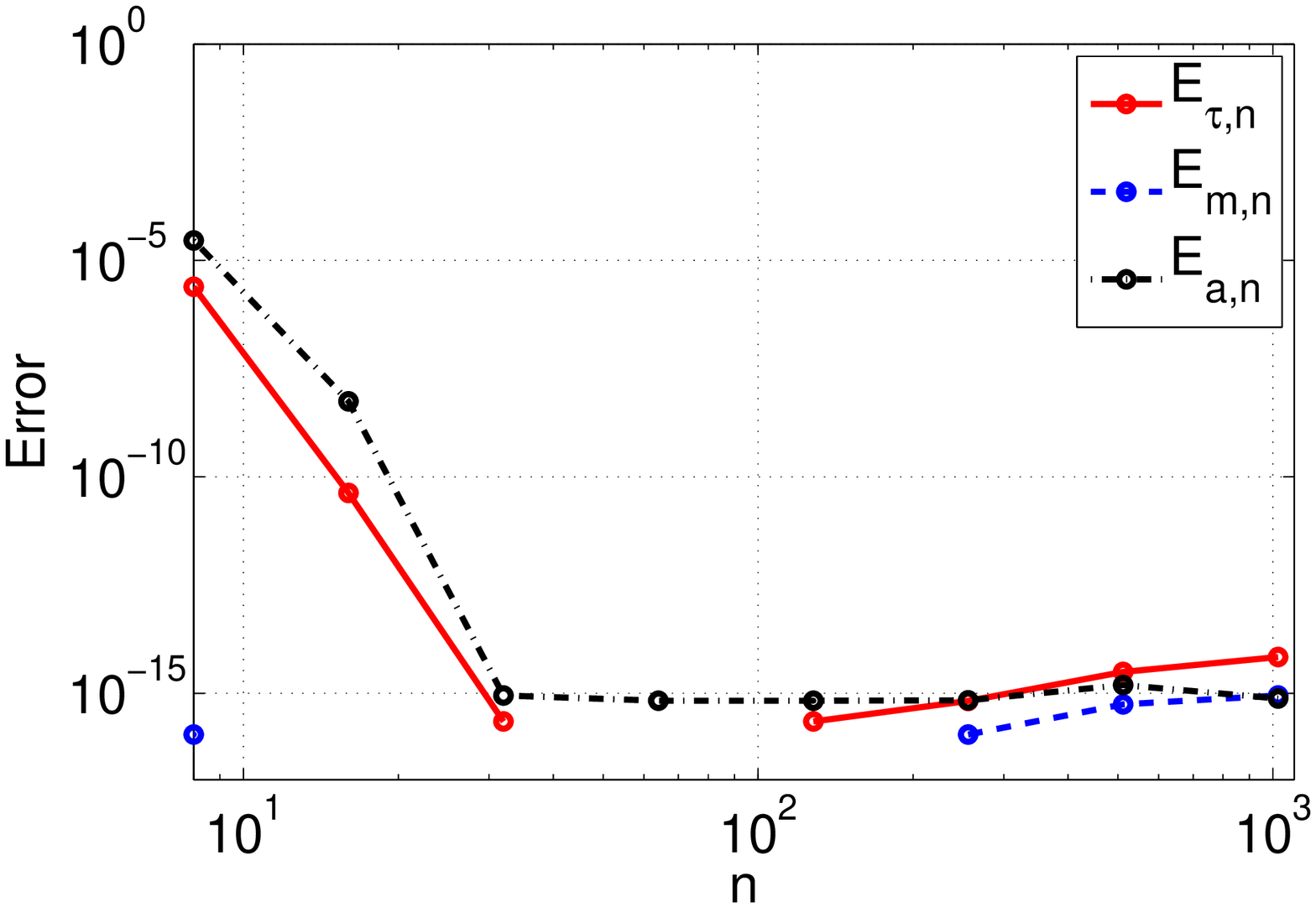}
\includegraphics[width=0.5\textwidth]{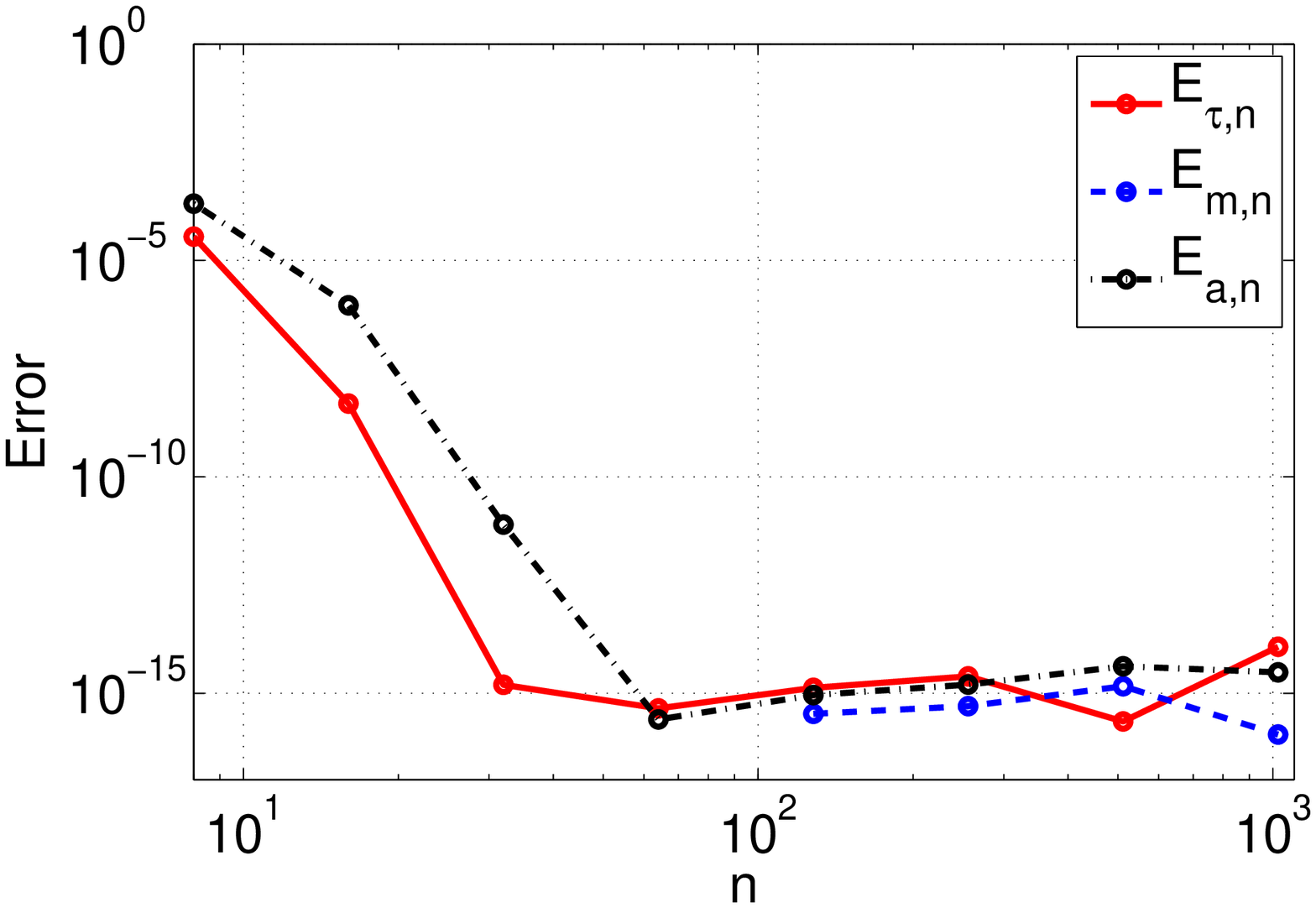}
}
\centerline{
\includegraphics[width=0.5\textwidth]{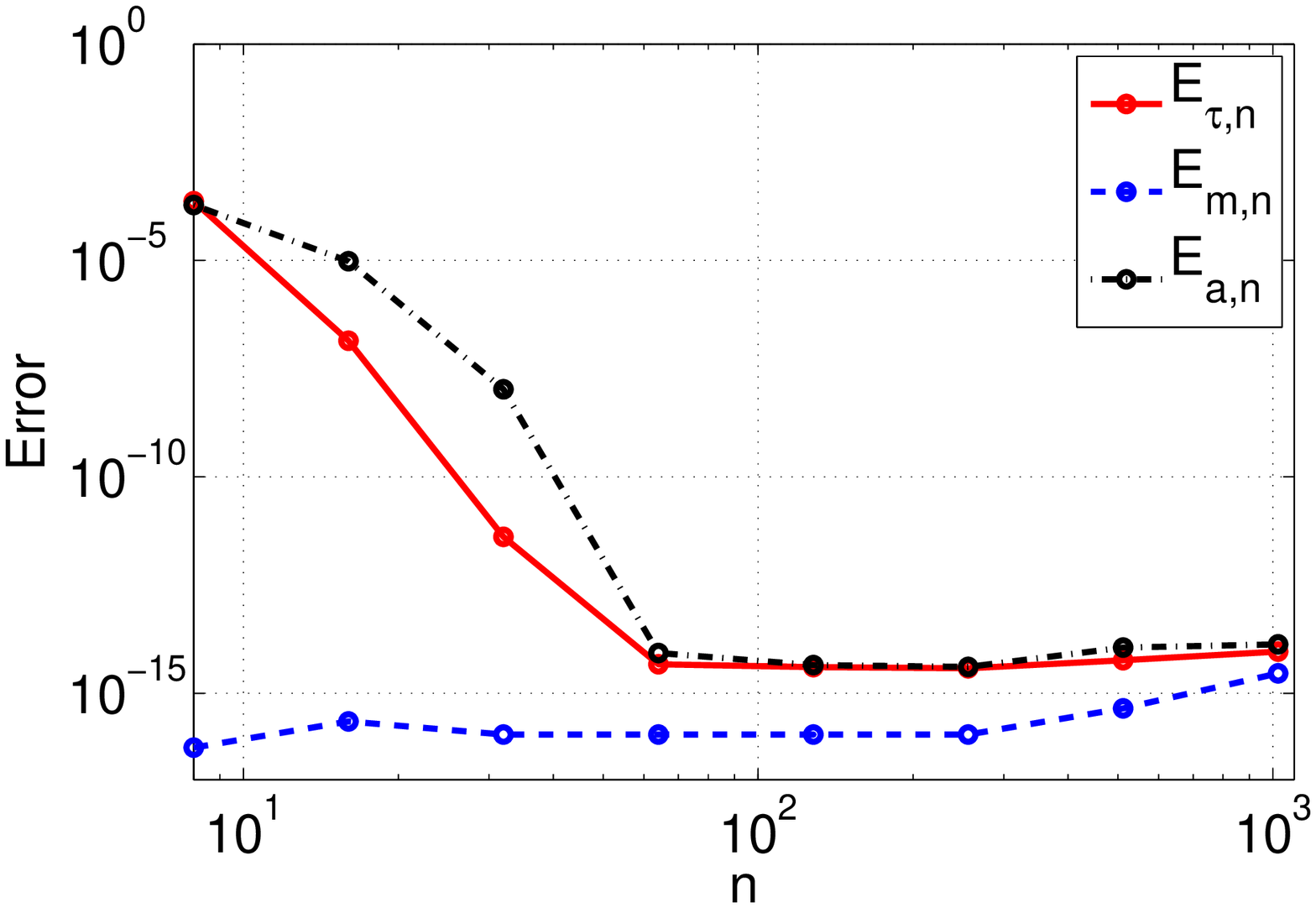}
}
\caption{Errors $E_{a,n}$, $E_{m,n}$, and $E_{\tau,n}$ obtained with $n$ nodes
and for $r=0.5$ (top left), $r=0.7$ (top right), and $r=0.9$ (bottom) 
in Example~\ref{ex:1}.  Missing dots indicate that the error is zero.}
\label{f:ex-1-ex-err}
\end{figure}

\begin{figure} % [tp]
\centerline{
\includegraphics[width=0.5\textwidth]{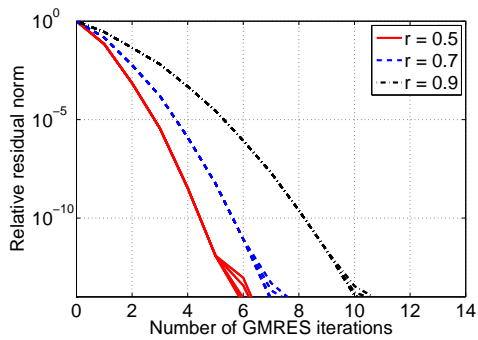}
}
\caption{Relative residual norms of the GMRES method for different values of 
$r$ and $n$ in Example~\ref{ex:1}.}
\label{fig:gmres_rr}
\end{figure}

\begin{figure}%[tp] %
\centerline{
\includegraphics[width=0.5\textwidth]{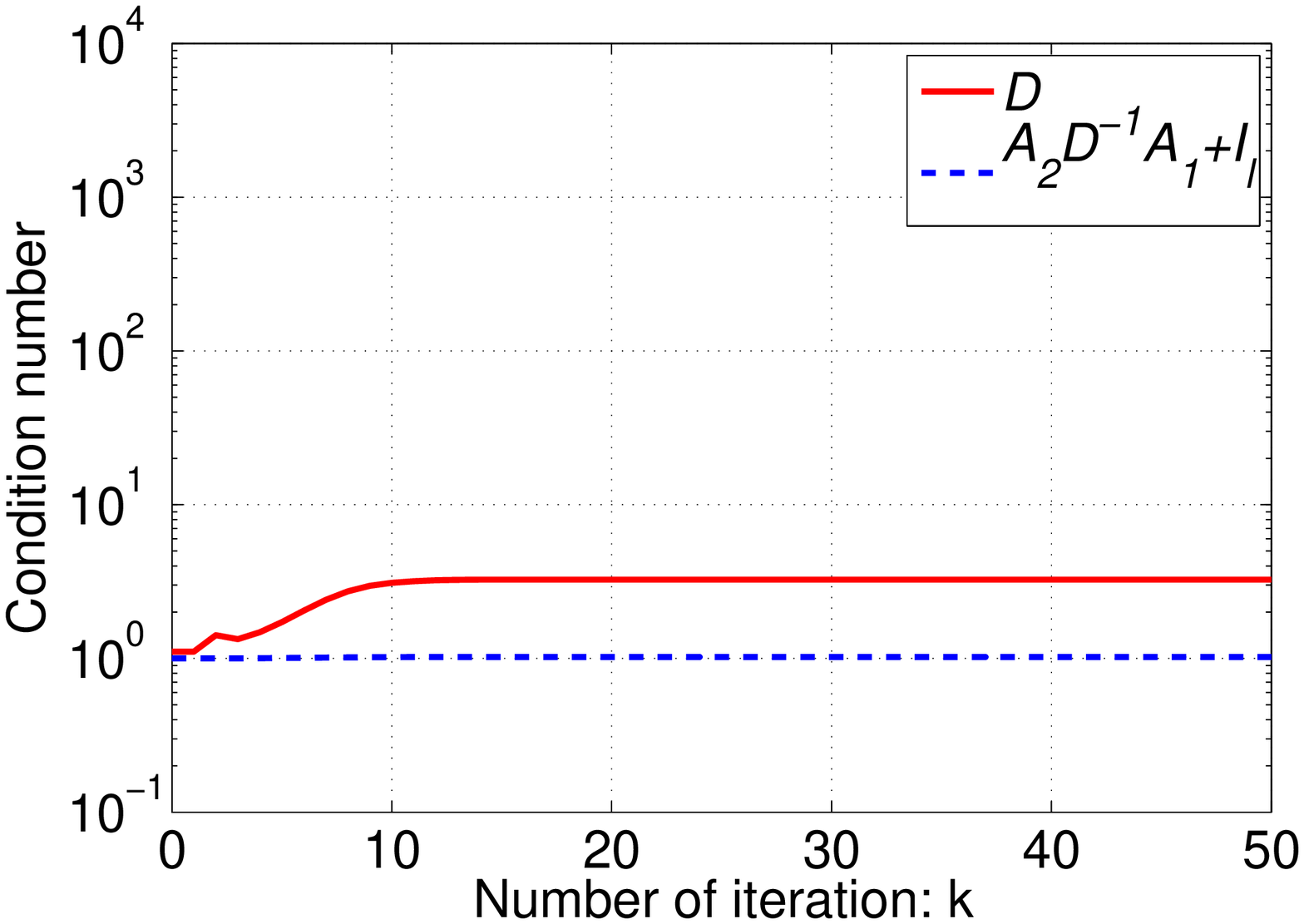}
\includegraphics[width=0.5\textwidth]{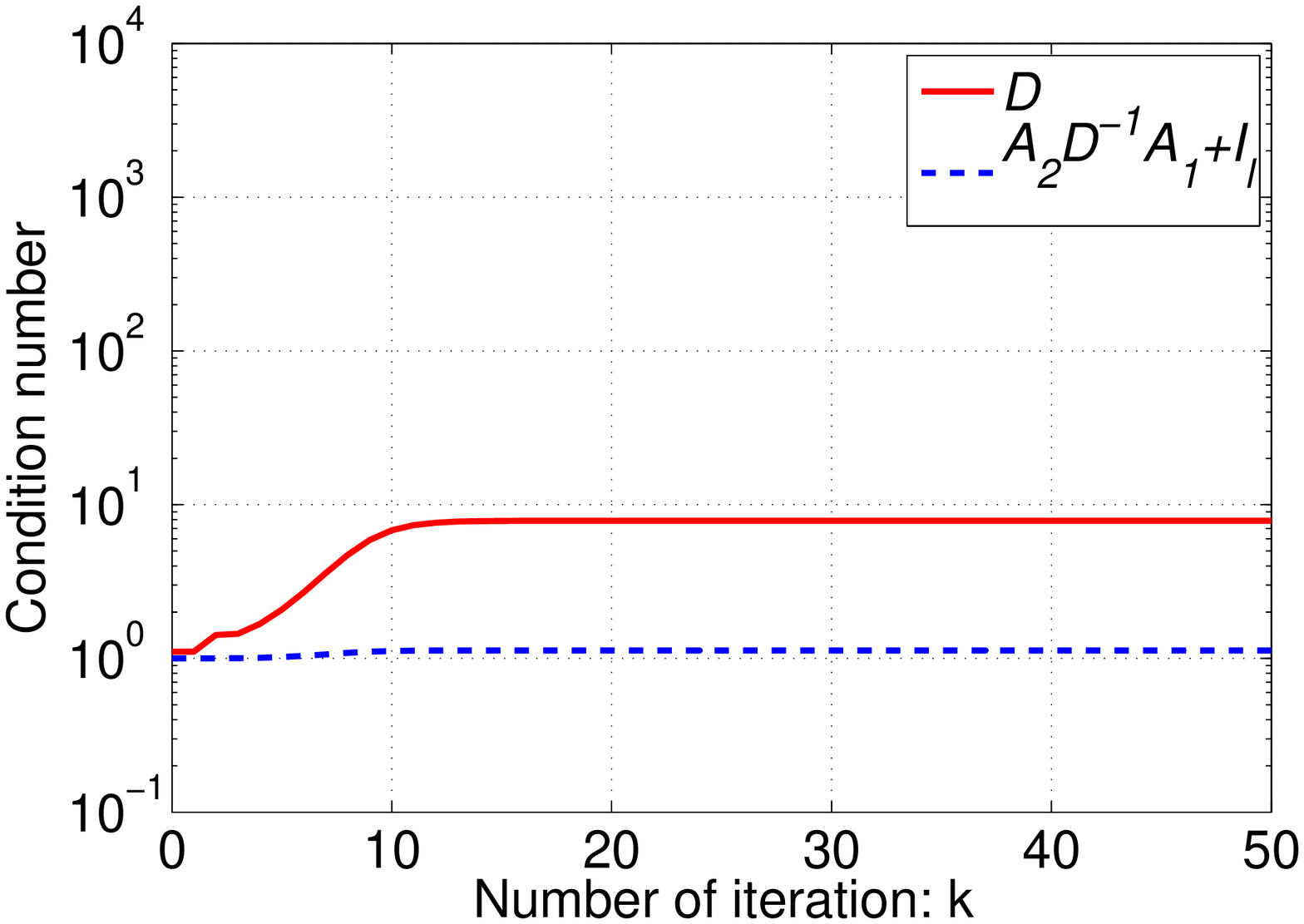} }
\centerline{
\includegraphics[width=0.5\textwidth]{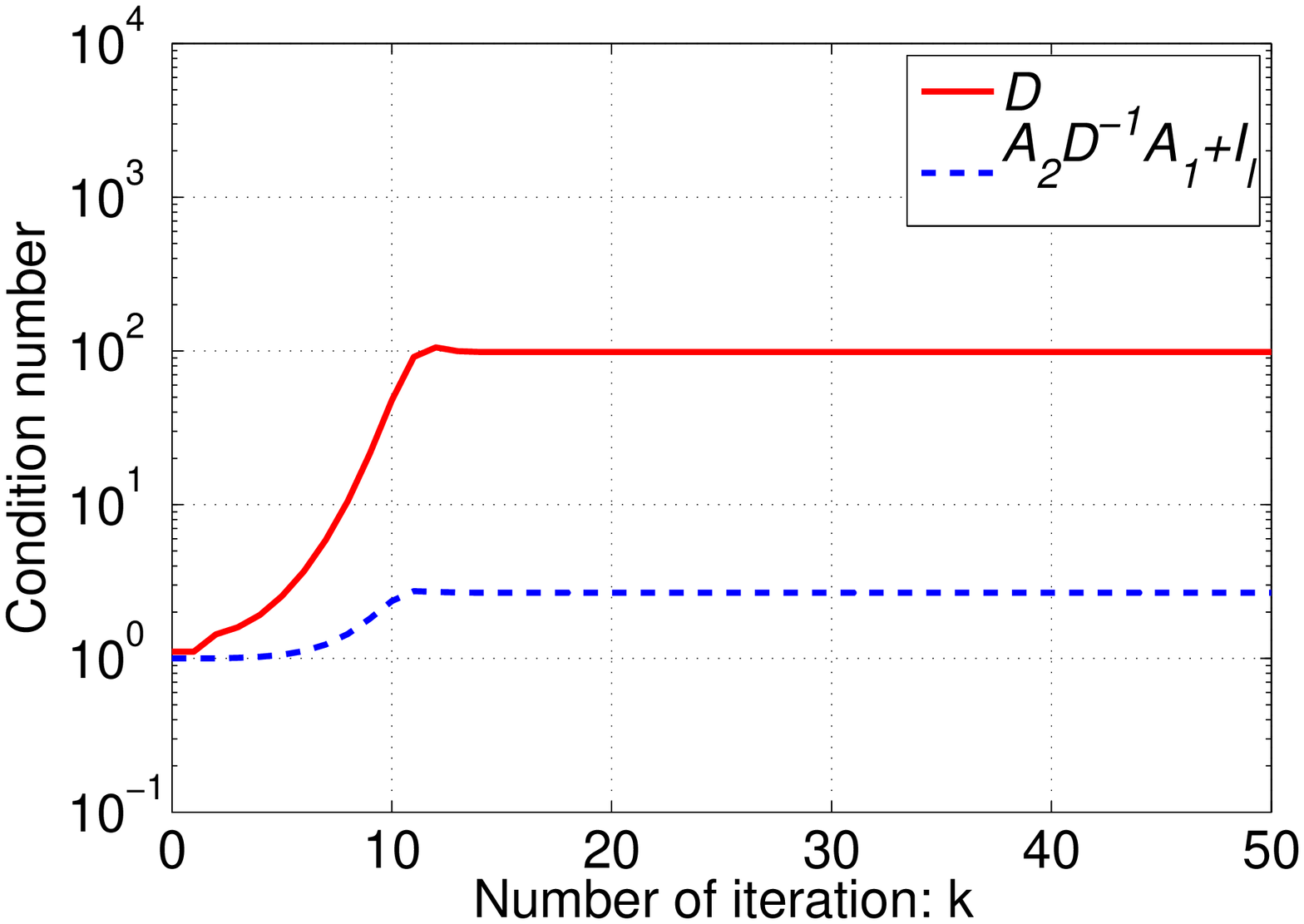}
}
\caption{2-norm condition numbers of $D$ and $A_2D^{-1}A_1+I_{\ell}$ 
obtained with $n=256$ and for $r=0.5$ (top left), $r=0.7$ (top right),
and $r=0.9$ (bottom) in Example~\ref{ex:1}.}
\label{f:ex-1-cond}
\end{figure}

\begin{figure}%[tp]%
\centerline{
\includegraphics[width=0.5\textwidth]{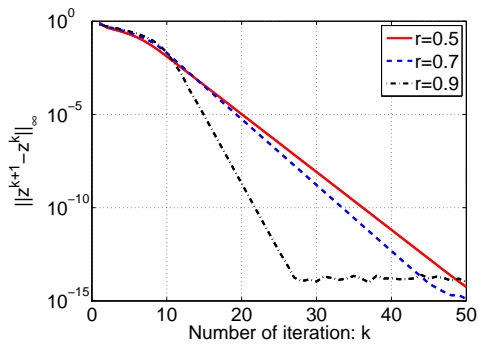}
}
\caption{$\|\bz^{k+1}-\bz^{k}\|_\infty$ in the Newton iteration obtained 
with $n=256$ and for $r=0.5$, $r=0.7$, and $r=0.9$ in Example~\ref{ex:1}.}
\label{f:ex-1-err}
\end{figure}

%------------------------------------------
\begin{example}\label{ex:2}
{\rm We consider the unbounded domain $\cK$ exterior to seven nonconvex and 
complicated but smooth curves as shown in Figure~\ref{f:ex-2}.  These curves 
are parametrized (from left to right) by
\[
\eta_j(t)=r_j(t)\, e^{-\i t}, \quad 0\le t\le 2\pi, \quad j=1,2,\ldots,7,
\]
where
\begin{align*}
r_1(t) &= 1.25+0.50\sin(4t)+0.30\cos(t), \\
r_2(t) &= 1.25+0.40\sin(2t)+0.20\cos(3t),\\
r_3(t) &= 0.75+0.25e^{\cos(t)}\cos^2(3t)+0.50e^{\sin(t)}\sin^2(2t),\\
r_4(t) &= e^{\cos(t)}\cos^2(2t)+e^{\sin(t)}\sin^2(2t),\\
r_5(t) &= 0.75+0.25e^{\cos(t)}\cos^2(2t)+0.50e^{\sin(t)}\sin^2(3t),\\
r_6(t) &= 1.25+0.40\sin(4t)+0.20\cos(3t), \\
r_7(t) &= 1.25+0.50\sin(3t)+0.30\cos(t).
\end{align*}
The computed lemniscatic domain obtained with $n=256$ 
is shown on the bottom of Figure~\ref{f:ex-2}.
The GMRES method for the seven linear algebraic systems required $36$ iteration 
steps to attain a residual norm smaller than $10^{-14}$.
Figure~\ref{f:ex-2-err} shows the 2-norm condition numbers of the matrices $D$ 
and $A_2D^{-1}A_1+I_{\ell}$ as well as the norms $\|\bz^{k+1}-\bz^{k}\|_\infty$
in the Newton iteration.}
\end{example}

\begin{figure}%[tp] %
\centerline{\scalebox{0.4}{\includegraphics{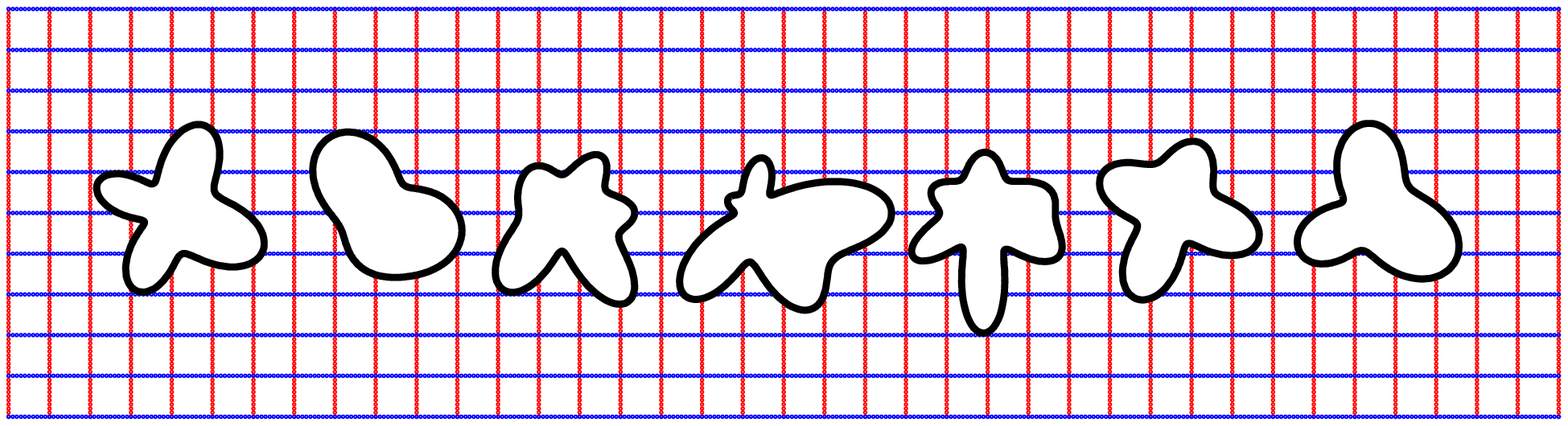}}}
\centerline{\scalebox{0.4}{\includegraphics{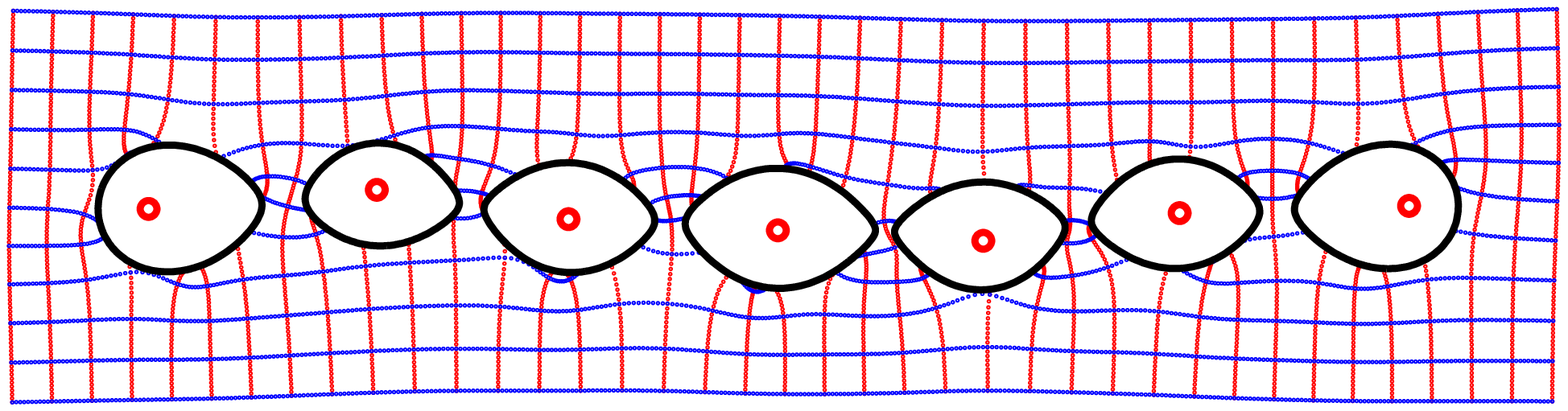}}}
\caption{Original domain for Example~\ref{ex:2} (top) and corresponding 
lemniscatic domain (bottom) obtained with $n=256$.}
\label{f:ex-2}
\end{figure}

\begin{figure}%[tp] %
\centerline{
\includegraphics[width=0.5\textwidth]{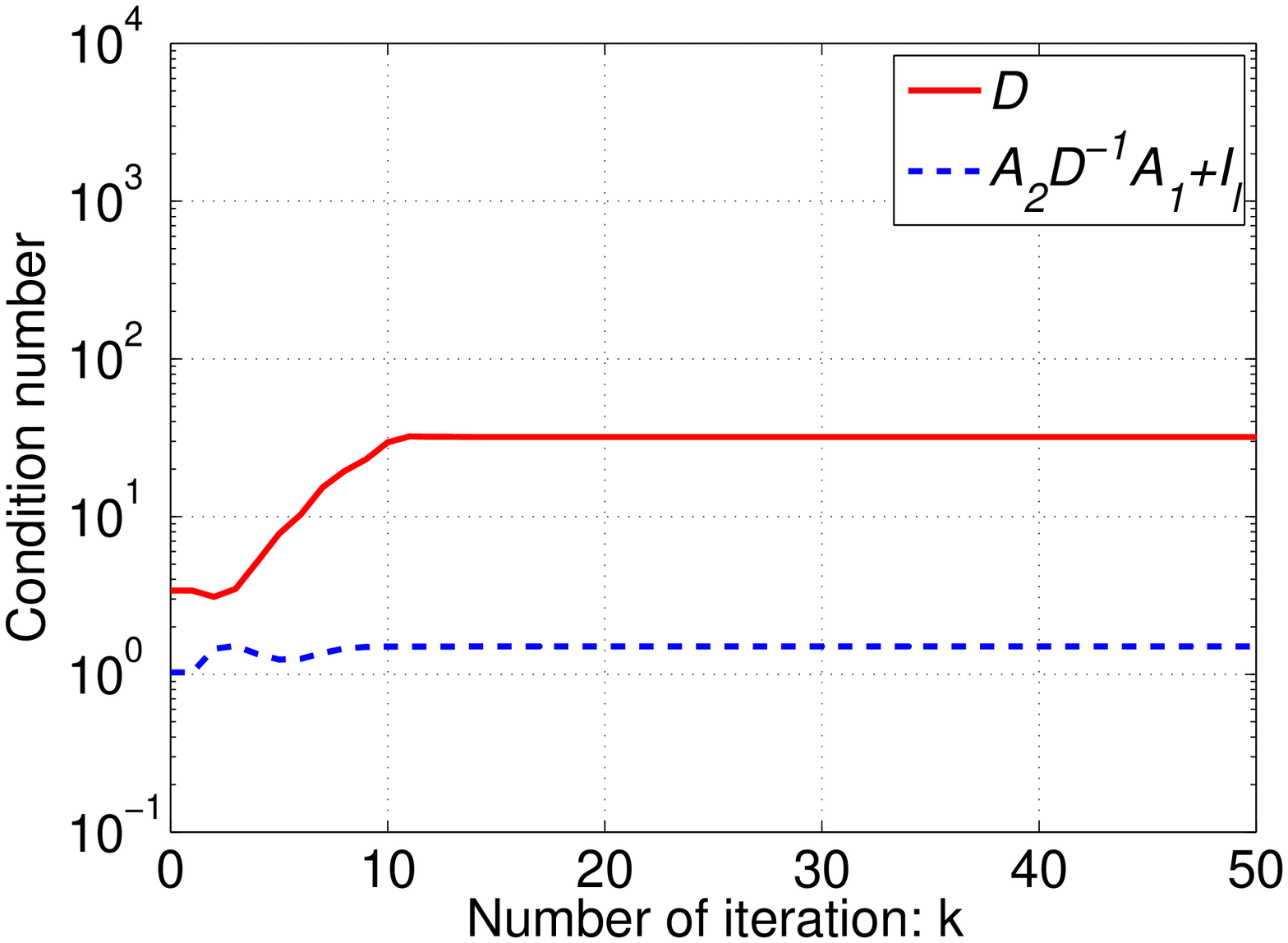}
\includegraphics[width=0.5\textwidth]{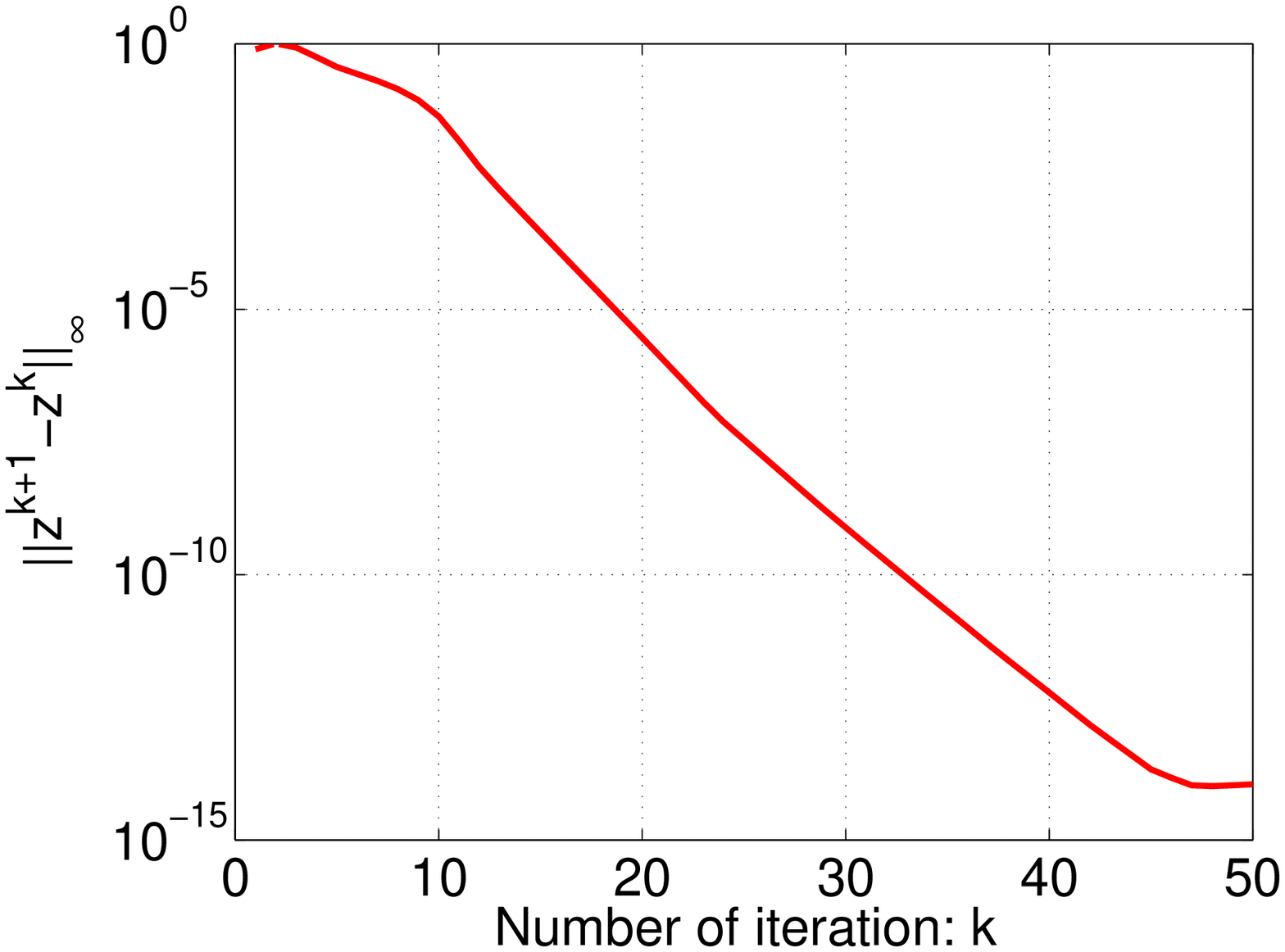}
}
\caption{2-norm condition numbers of $D$ and $A_2D^{-1}A_1+I_{\ell}$ (left)
and norms $\|\bz^{k+1}-\bz^{k}\|_\infty$ (right) in Example~\ref{ex:2}.}
\label{f:ex-2-err}
\end{figure}

%----------------------------------------

\begin{example}\label{ex:3}
{\rm In this example we demonstrate that our method also works for domains with 
high connectivity. We consider the unbounded domain $\cK$ exterior to 64
circles as shown on the left of Figure~\ref{f:ex-3}. 
The domain $\cK$ is symmetric with respect to both the
real and the imaginary axis. Since the computed conformal map is normalized
as in~\eqref{eq:norm} the lemniscatic domain $\cL$ has the same symmetry 
properties; see~\cite[Lemma~2.2]{Set-Lie15}.
The computed lemniscatic
domain obtained with $n=256$ is shown on the right of that figure.
The GMRES method for the 64 linear algebraic systems required between 14 and 18 
iteration steps to attain a residual norm smaller than $2 \cdot 10^{-14}$.
Figure~\ref{f:ex-3-err} shows the 2-norm condition numbers of $D$ and
$A_2D^{-1}A_1+I_{\ell}$ as well as the norms $\|\bz^{k+1}-\bz^{k}\|_\infty$ 
in the Newton iteration.
}\end{example}

\begin{figure}%[tp] %
\centerline{
\includegraphics[width=0.45\textwidth]{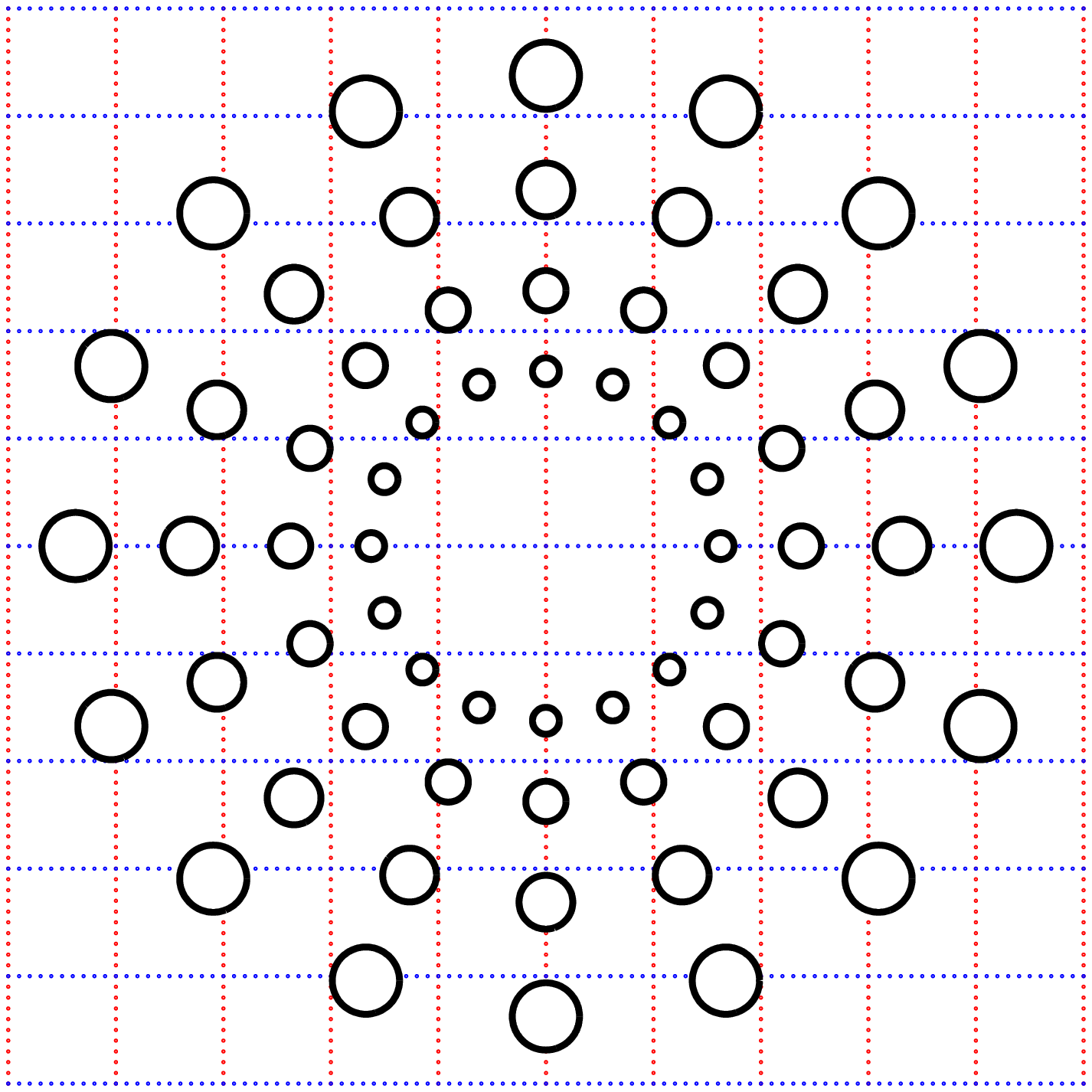}
\includegraphics[width=0.45\textwidth]{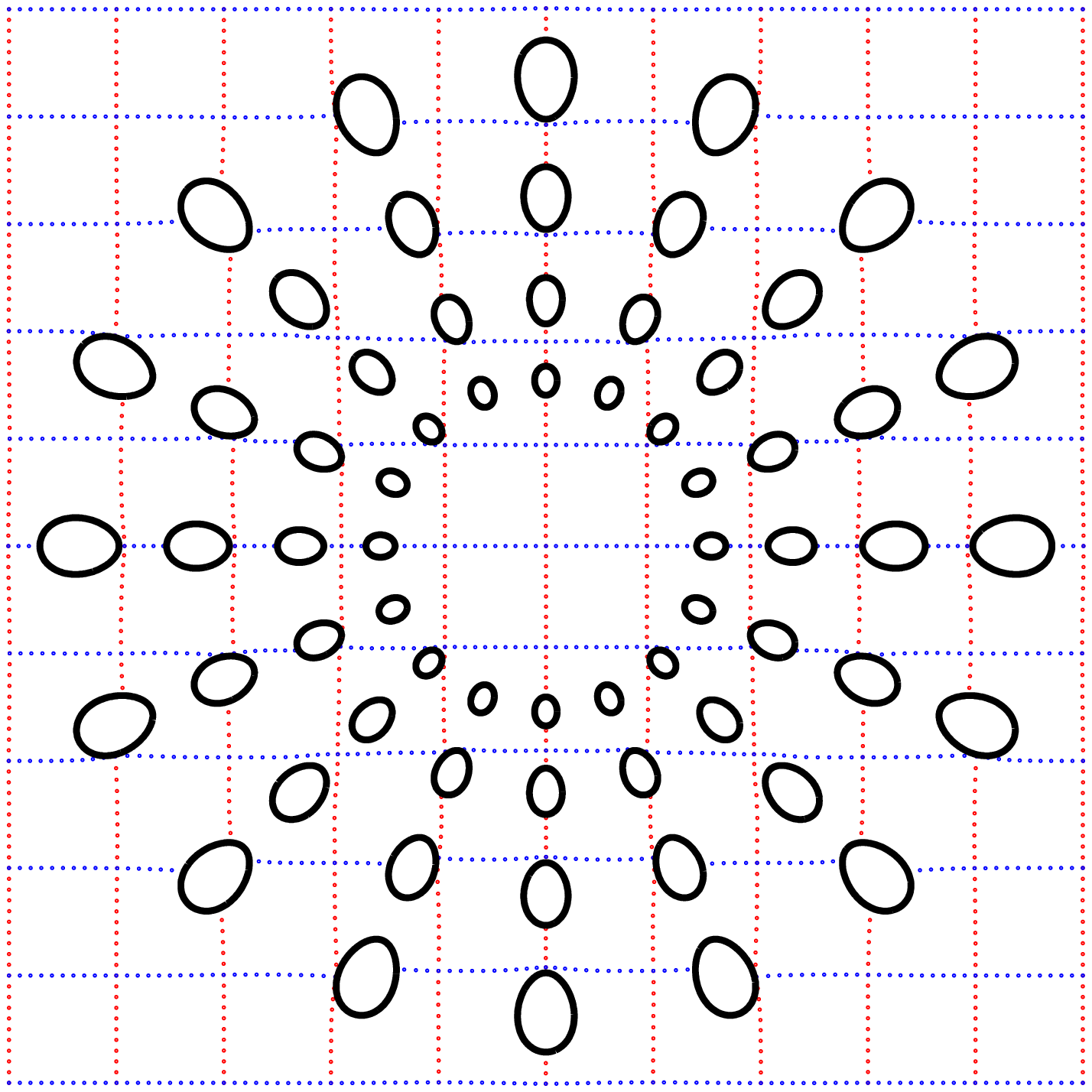}
}
\caption{Original domain for Example~\ref{ex:3} (left) and the 
 corresponding lemniscatic domain (right) obtained with $n=256$.}
\label{f:ex-3}
\end{figure}

\begin{figure}%[tp] %
\centerline{
\includegraphics[width=0.5\textwidth]{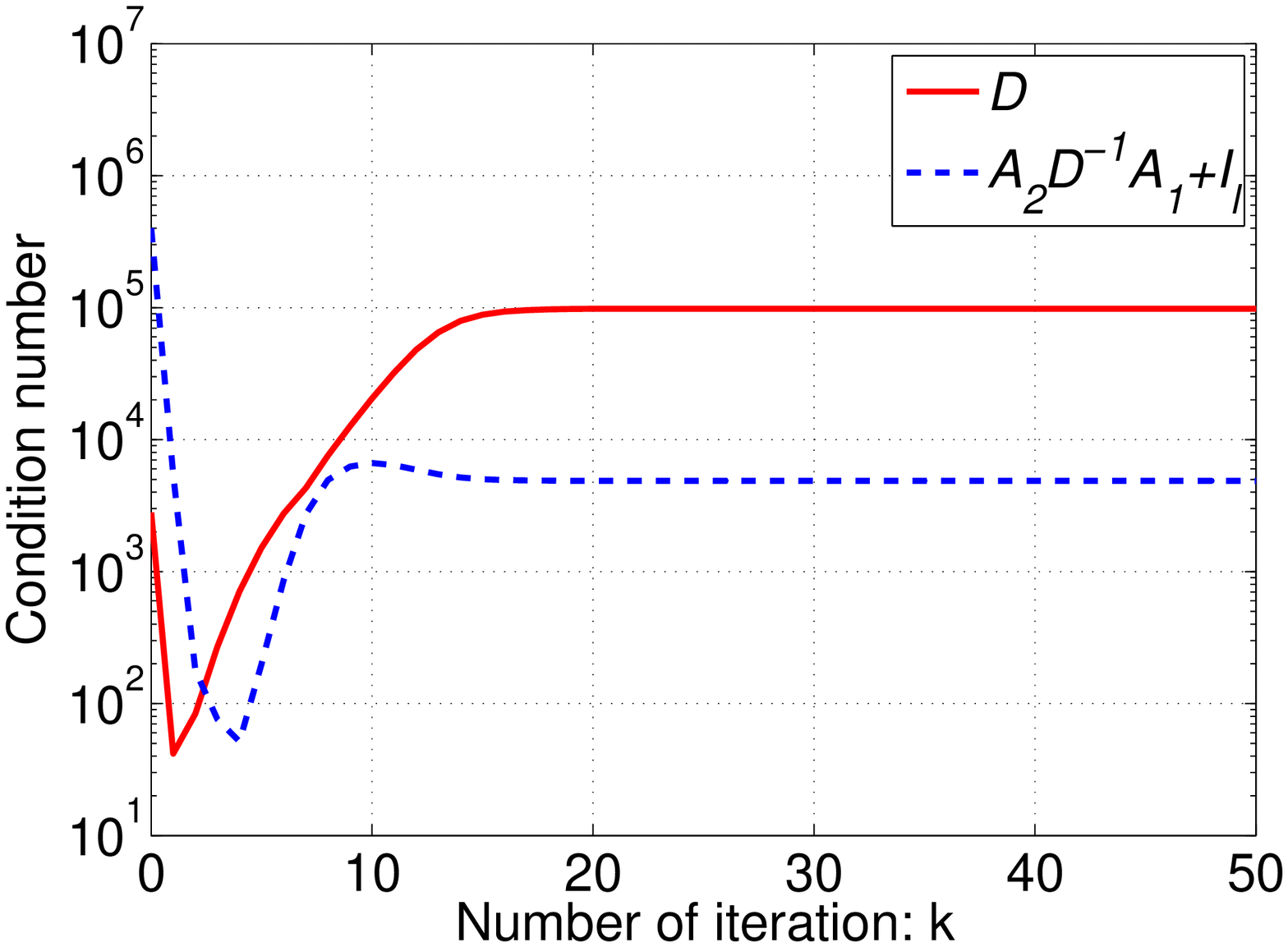}
\includegraphics[width=0.5\textwidth]{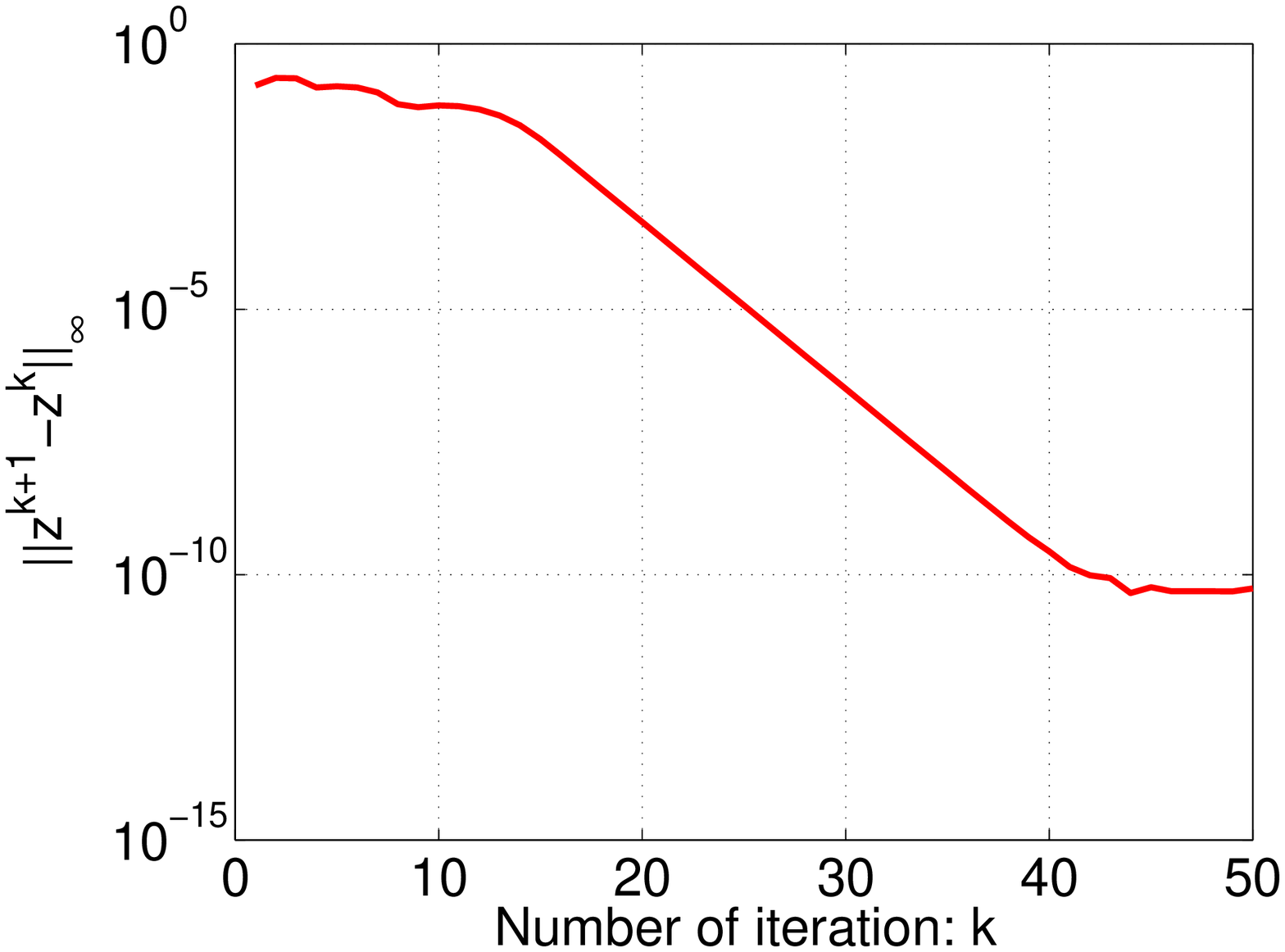}
}
\caption{2-norm condition numbers of $D$ and $A_2D^{-1}A_1+I_{\ell}$ (left) and 
the norms $\|\bz^{k+1}-\bz^{k}\|_\infty$ (right) in Example~\ref{ex:3}.}
\label{f:ex-3-err}
\end{figure}

%------------------------------------------

\begin{example}\label{ex:4}
{\rm As indicated in Section~\ref{sect:compute_mv_tau}, our method can also 
be used when the boundary components of $\cK$ are only piecewise smooth 
Jordan curves. As an example we consider the unbounded domain $\cK$ 
exterior to four squares as shown on the left of Figure~\ref{f:ex-4}.
The computed lemniscatic domain obtained with $n=1024$ is shown on the 
right of that figure.
The GMRES method for the four linear algebraic systems required 26 iteration 
steps to attain a residual norm smaller than $10^{-14}$.
Figure~\ref{f:ex-4-err} shows the 2-norm condition numbers of $D$ and
$A_2D^{-1}A_1+I_{\ell}$ as well as the norms $\|\bz^{k+1}-\bz^{k}\|_\infty$
in the Newton iteration.
}\end{example}

\begin{figure}[tp] %
\centerline{
\includegraphics[width=0.45\textwidth]{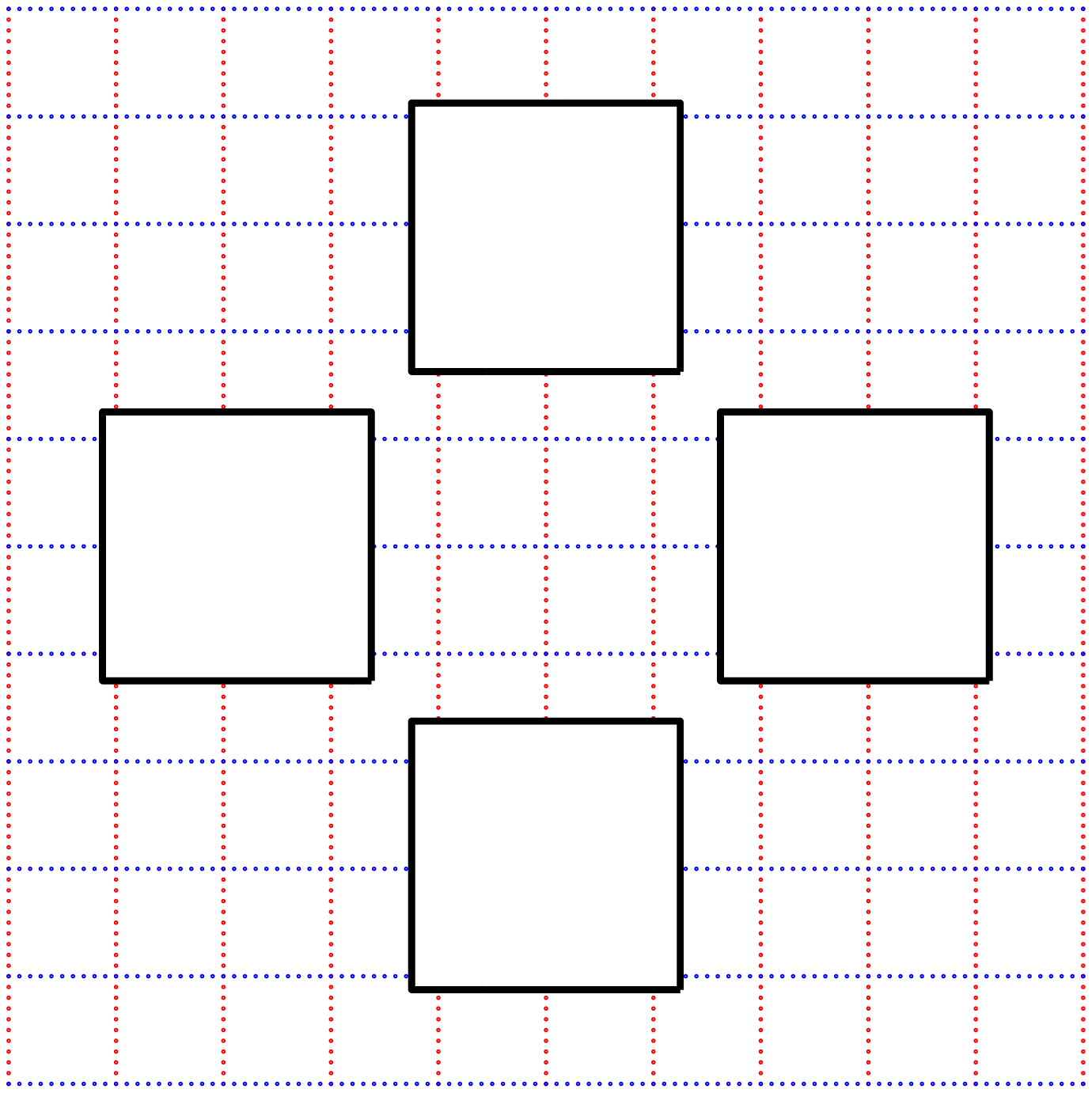}
\includegraphics[width=0.45\textwidth]{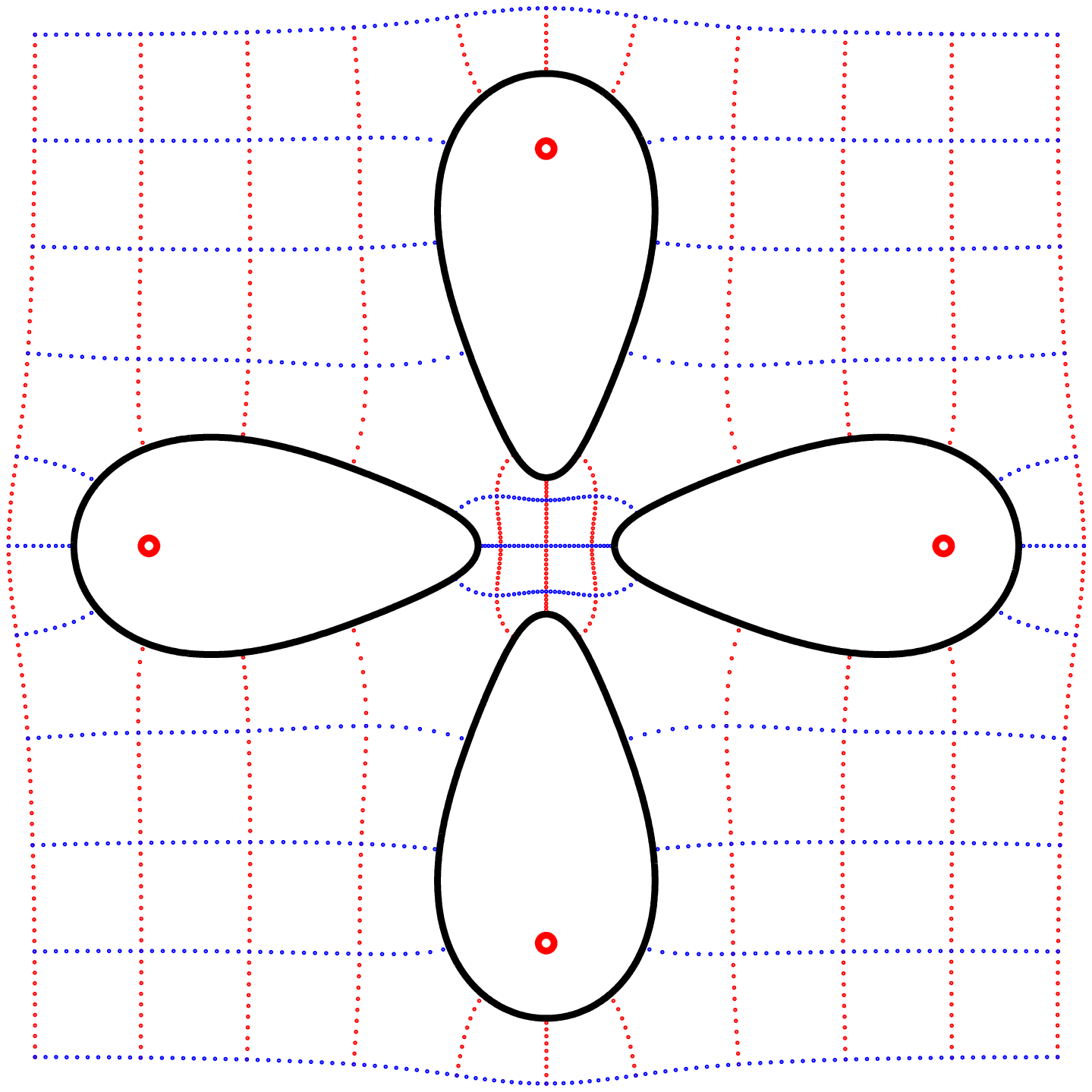}
}
\caption{Original domain for Example~\ref{ex:4} (left) and corresponding 
lemniscatic domain (right) obtained with $n=1024$.}
\label{f:ex-4}
\end{figure}

\begin{figure}[tp] %
\centerline{
\includegraphics[width=0.5\textwidth]{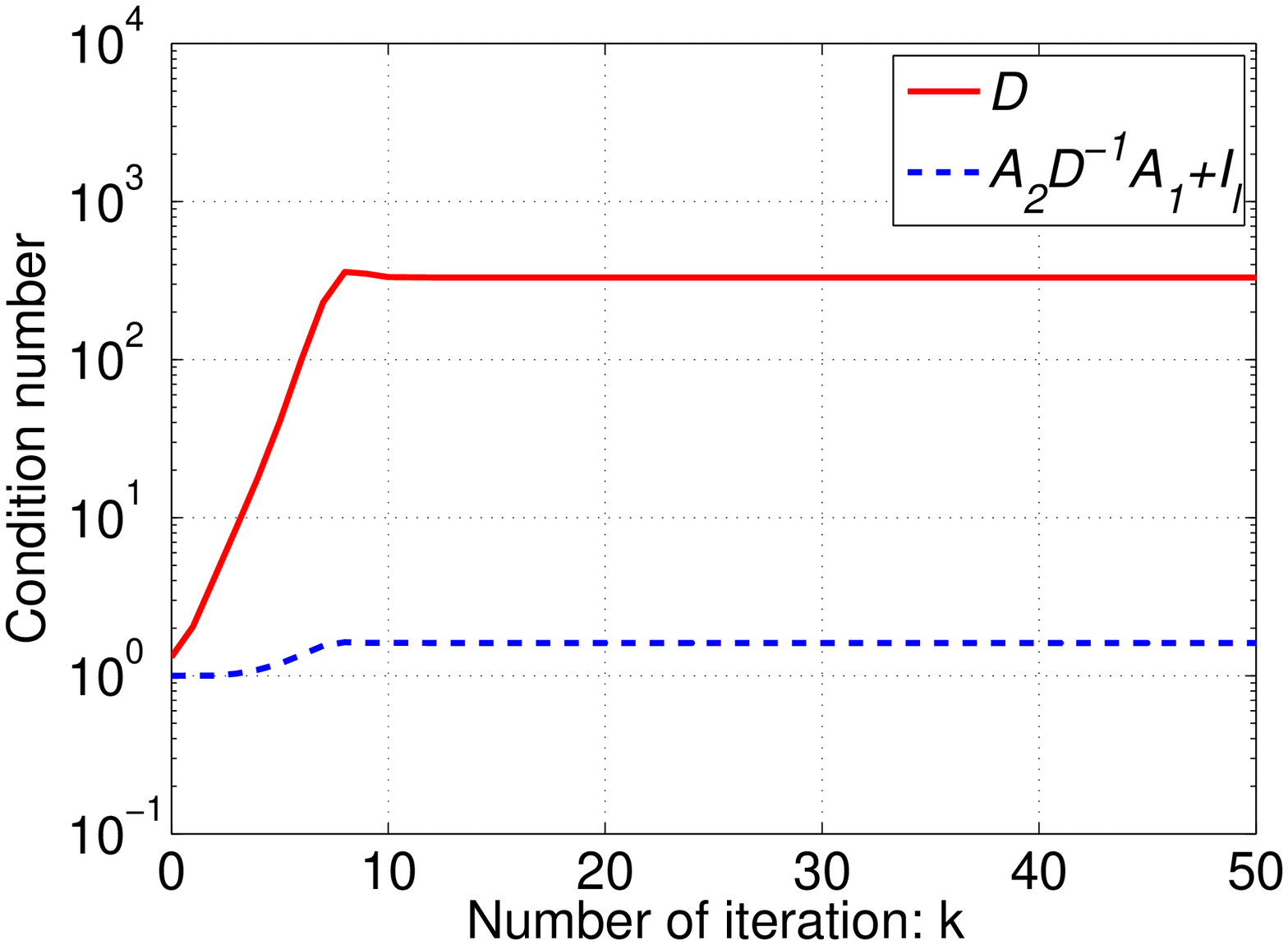}
\includegraphics[width=0.5\textwidth]{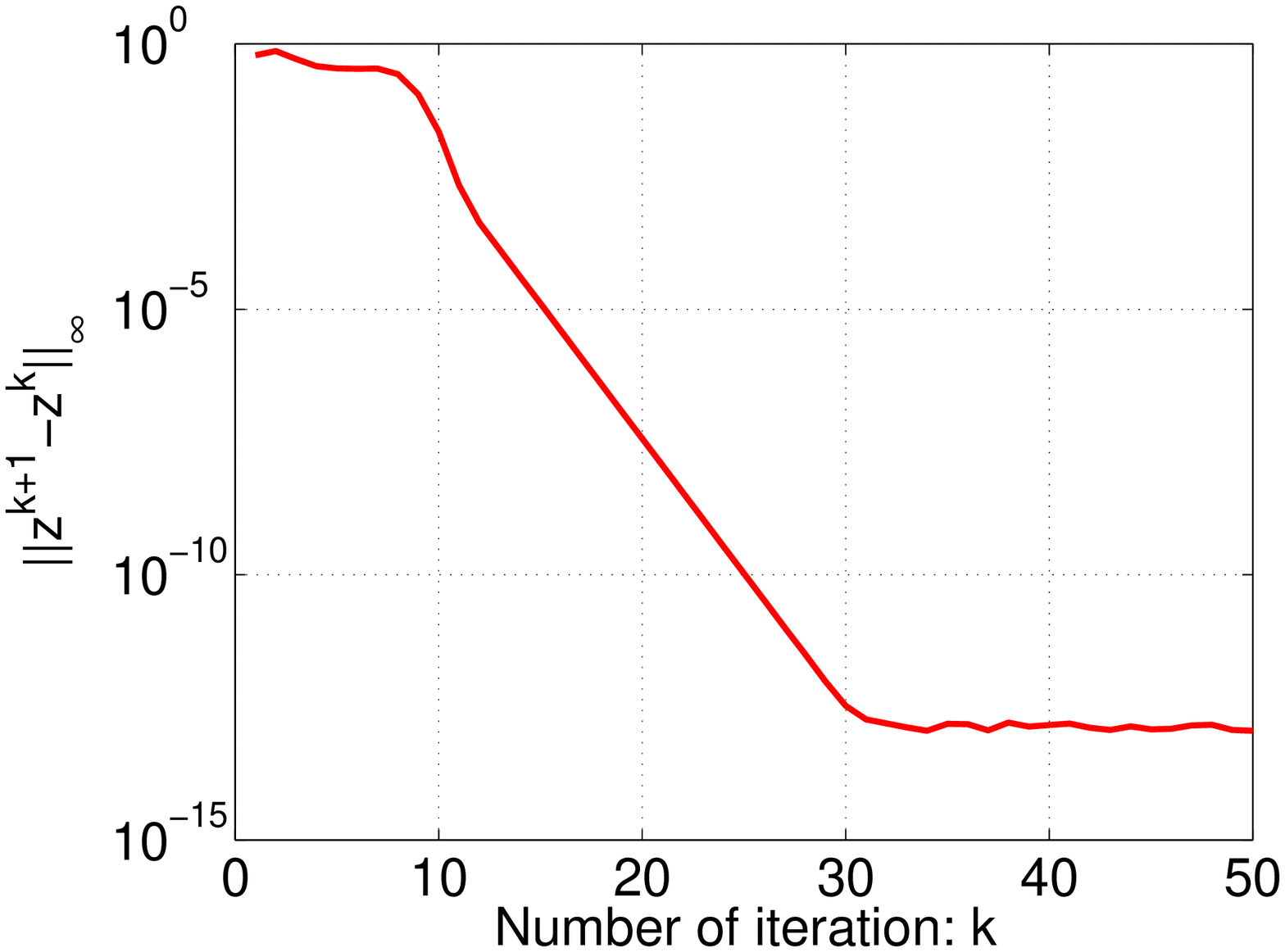}
}
\caption{2-norm condition numbers of $D$ and $A_2D^{-1}A_1+I_{\ell}$ (left) and 
the norms $\|\bz^{k+1}-\bz^{k}\|_\infty$ (right) in Example~\ref{ex:4}.}
\label{f:ex-4-err}
\end{figure}

%------------------------------------------

\begin{example} \label{ex:3bw}
{\rm In this example we consider the unbounded domain $\cK$ exterior to three 
non-convex sets as shown on the left of Figure~\ref{fig:3bw}. The three sets 
are of the form introduced in~\cite{Koc-Lie2000}. The boundary curves are 
analytic and an analytic parameterization is known as well. The corresponding 
lemniscatic domain obtained with $n=1024$ nodes per boundary component is shown 
on the right of Figure~\ref{fig:3bw}. The GMRES method for the three linear 
algebraic systems required between $32$ and $34$ iteration steps to attain a 
residual norm smaller than $10^{-14}$.
Figure~\ref{fig:ex-3bw-err} shows the 2-norm condition numbers of $D$ and
$A_2D^{-1}A_1+I_{\ell}$ as well as the norms $\|\bz^{k+1}-\bz^{k}\|_\infty$
in the Newton iteration.
}\end{example}

\begin{figure}[tp] %
\centerline{
\includegraphics[width=0.5\textwidth]{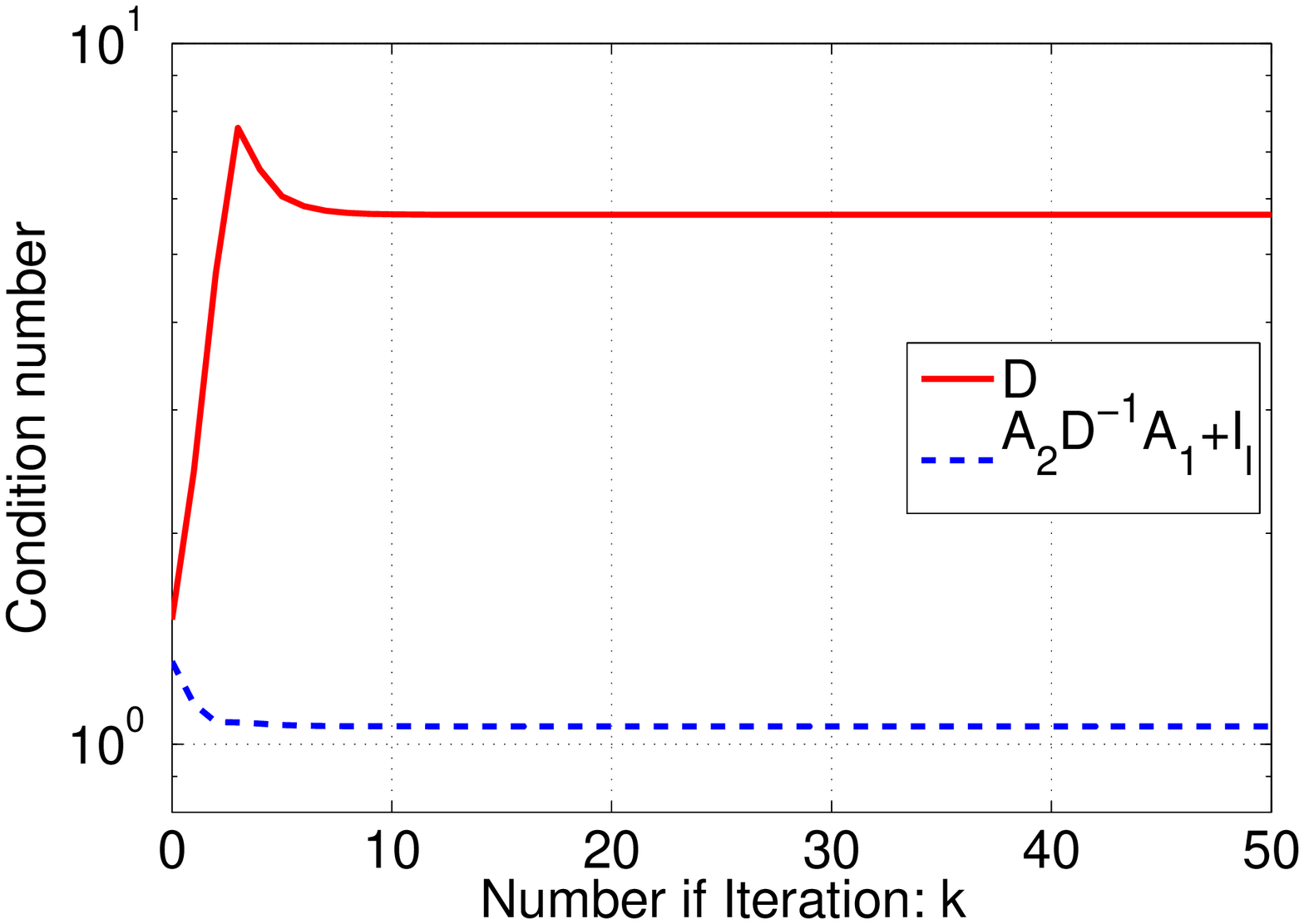}
\includegraphics[width=0.5\textwidth]{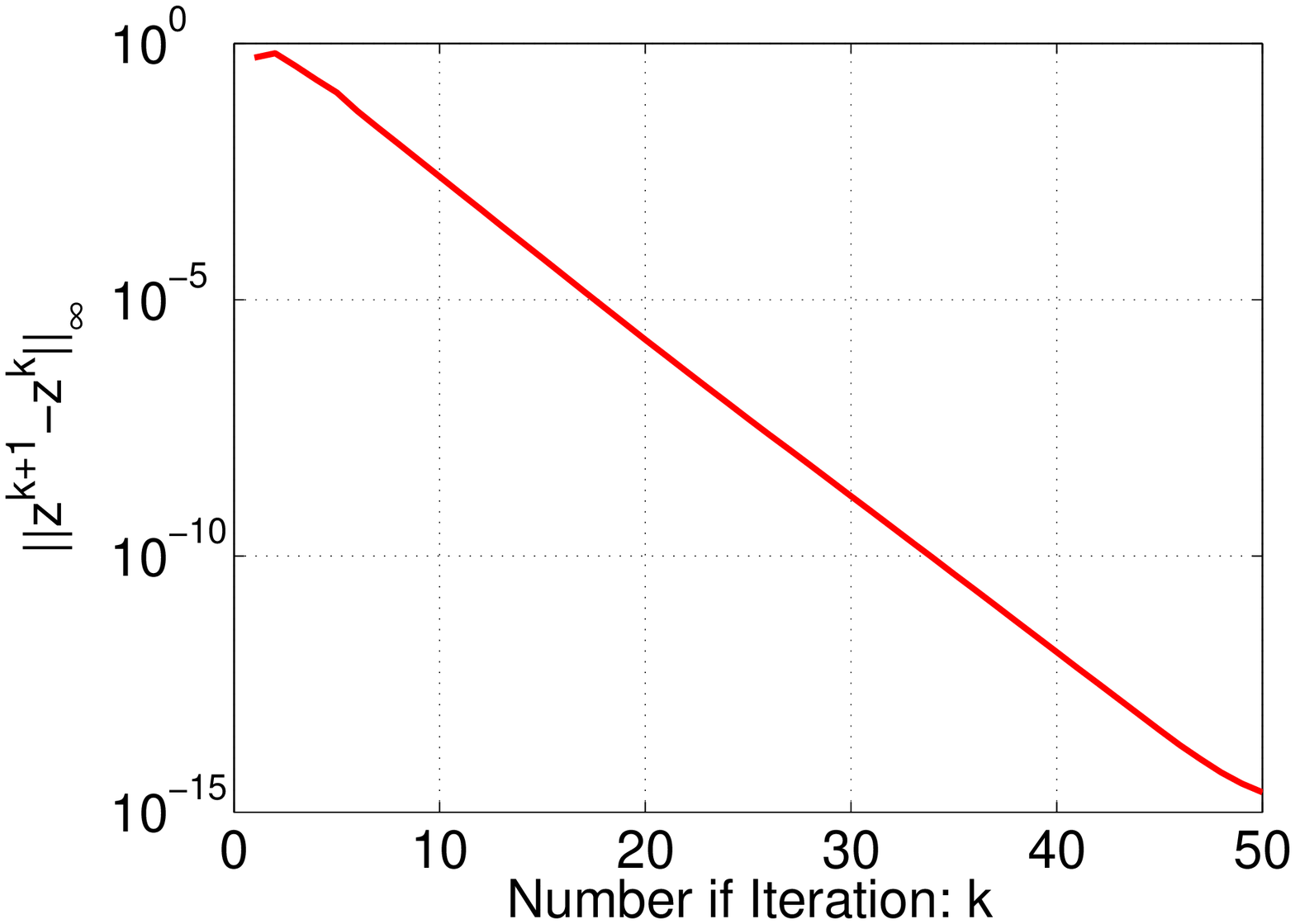}
}
\caption{2-norm condition numbers of $D$ and $A_2D^{-1}A_1+I_{\ell}$
(left) and the norms $\|\bz^{k+1}-\bz^{k}\|_\infty$ (right) 
in Example~\ref{ex:3bw}.}
\label{fig:ex-3bw-err}
\end{figure}

\section{Concluding remarks}
\label{sect:concl}

In this article we derived a method that numerically computes the conformal map 
from a given domain onto a lemniscatic domain.  The method relies on solving a 
boundary integral equation with the Neumann kernel.  It takes as input a 
parameterization of the boundary of the original domain and yields the 
parameters of the lemniscatic domain and the boundary values of the conformal 
map.  Using the numerically computed conformal map $\Phi : \cK \to \cL$ it is 
in particular possible to compute the Faber--Walsh polynomials associated with 
the compact set $\widehat{\C} \setminus \cK$.  The first numerical examples for 
such a computation are given in the paper~\cite{Set-Lie15_fwprop}.

The transfinite diameter (or logarithmic capacity) of the set
$\widehat{\C} \setminus \cK$ is one of the parameters in its corresponding 
lemniscatic domain $\cL$; see Theorem~\ref{thm:existence_lem_dom}.  This 
parameter is computed in the first step of the algorithm proposed in this 
paper; see Section~\ref{sect:compute_mv_tau}.  Since computing the transfinite 
diameter of compact sets is an interesting problem in its own right, we have 
derived a numerical method for this task, which is based on the method of 
Section~\ref{sect:compute_mv_tau}, in our paper~\cite{Lie-Set-Nas15_cap}.

Let us point out a few open questions.
As discussed in Section~\ref{sect:compute_mv_tau} and demonstrated numerically 
in Section~\ref{sect:examples}, the GMRES method converges very fast when 
solving the discretized boundary integral equation with the Neumann kernel.  A 
rigorous analysis of this effect is subject to further work.
Further, it would be interesting to analyze how the accuracy of the solution of 
the linear algebraic systems (solved with GMRES) affects the accuracy of the 
computed conformal map and of the parameters of the lemniscatic domain.
Finally, a ``black box'' starting point for the Newton iteration for solving 
the non-linear system~\eqref{eq:non-sys} would be of interest.

\paragraph*{Acknowledgements}
We thank Robert Luce for helpful discussions on Cauchy matrices and the 
solution of the systems \eqref{eqn:x}--\eqref{eqn:y_sys}.
We also thank Elias Wegert for suggesting our collaboration.
We further thank the anonymous referees for helpful comments.

\bibliographystyle{siam}
\bibliography{numconf}

\end{document}